\def\Q{\mathbb{Q}}
\def\Z{\mathbb{Z}}
\def\N{\mathbb{N}}
\def\cC{{\mathcal C}}
\def\o{\omega}
\def\G{\Gamma}
\def\FP{{\rm FP}}
\def\<{\langle}
\def\>{\rangle}
\def\-{\overline}
\newtheorem{thmA}{Theorem}
\newtheorem{theorem}{Theorem}[section]
\newtheorem{lemma}[theorem]{Lemma}
\newtheorem{prop}[theorem]{Proposition}
\newtheorem{cor}[theorem]{Corollary}
\newtheorem{corollary}[theorem]{Corollary}
\newenvironment{pf}{\par\medskip\noindent\textit{Proof.}~}{\hfill $\square$\par\medskip}
\newenvironment{pfof}[1]{\par\medskip\noindent\textbf{Proof of #1.}~}{\hfill $\square$\par\medskip}
\date{18 Sept 2007}
\begin{document}

\catcode`\@=11
\def\serieslogo@{\relax}
\def\@setcopyright{\relax}
\catcode`\@=12

\def\jump{\vskip 0.3cm}
\title[Subgroups of direct products of limit groups]{Subgroups of  direct
products of limit groups}

\author[Bridson]{Martin R.~Bridson}
\address{Martin R.~Bridson\\
Mathematical Institute \\
24--29 St Giles'  \\
Oxford OX1 3LB  \\
U.K. }
\email{bridson@maths.ox.ac.uk}

\author[Howie]{James Howie }
\address{ James Howie\\
Department of Mathematics and Maxwell Institute for Mathematical Sciences\\
Heriot--Watt University\\
Edinburgh EH14 4AS }
\email{ jim@ma.hw.ac.uk}

\author[Miller]{ Charles~F. Miller~III }
\address{Charles~F. Miller~III\\
Department of Mathematics and Statistics\\
University of Melbourne\\
Melbourne 3010, Australia }
\email{ c.miller@ms.unimelb.edu.au }

\author[Short]{ Hamish Short }
\address{ Hamish Short \\
L.A.T.P., U.M.R. 6632 \\
Centre de Math\'ematiques et d'Informatique\\
39 Rue Joliot--Curie\\
Universit\'e de Provence, F--13453\\
Marseille cedex 13, France }
\email{ hamish@cmi.univ-mrs.fr }
 
\thanks{This work was supported in part by the CNRS (France)
and by Alliance Scientifique grant \# PN 05.004.
Bridson is supported by a Royal Society Nuffield Research Merit
Award and an EPSRC Senior Fellowship. }

\subjclass{20F65,20F67,20J05 (20E07, 20E08, 20E26)}

\keywords{subdirect products, homology of groups, limit groups, residually-free groups}

\maketitle

\begin{abstract}
If $\G_1,\dots,\G_n$ are limit groups and
$S\subset\G_1\times\dots\times\G_n$ is of
type $\FP_n(\mathbb Q)$ then $S$ contains a
subgroup of finite index that is itself a direct
product of at most $n$ limit groups. This answers
a question of Sela.
\end{abstract}

\maketitle

\section{Introduction}
The systematic study of the higher finiteness properties of groups
was initiated forty years ago by Wall \cite{wall} and Serre \cite{serre}.
In 1963, Stallings \cite{stall} constructed the first example
of a finitely presented group $\G$ with $H_3(\G;\mathbb Q)$ infinite
dimensional; his example was a subgroup of a direct product of three
free groups. 
This was  the first indication of the
great diversity to be found amongst the finitely presented subgroups
of direct products of free groups, a theme developed in \cite{bieri}.

In contrast, Baumslag and Roseblade \cite{BR}
 proved that in a direct product of two
free groups the only finitely presented subgroups are the 
{\em{obvious ones}}: 
such a subgroup is either free or has a subgroup of finite index
that is a direct product of free groups. 
In \cite{bhms} the present
authors explained this contrast  by proving that
 the exotic behaviour among the finitely presented
subgroups of direct products of free groups is accounted for entirely 
by the failure of higher
homological-finiteness conditions. In particular, we
proved that the only subgroups $S$ of type $\FP_n$ in a 
direct product of $n$ free
groups are the obvious ones: if $S$ intersects each of the direct factors
non-trivially, it virtually splits as the direct product of these intersections.
We also proved that this splitting phenomenon persists when one replaces
free groups by the fundamental groups of compact surfaces \cite{bhms};
in the light of the work of Delzant and Gromov \cite{DG}, 
this has significant implications for  the structure of 
K\"ahler groups.

Examples show that the splitting phenomenon for $\FP_\infty$
subgroups does not extend to products of more general
2-dimensional hyperbolic groups or
higher-dimensional Kleinian groups \cite{mb:haef}. But recent work
at the confluence of logic, group theory and topology has brought to
the fore a class of groups that is more profoundly tied to surface
and free groups than either of the above classes, namely {\em{limit groups}}.

Limit groups arise naturally from several points of view. They are
perhaps most easily described as the finitely generated
groups $L$ that are {\em{fully residually free}} (or 
{\em{$\omega$--residually free}}):
 for any finite subset $T\subset L$ there exists a homomorphism
from $L$ to a free group that is injective on $T$. 
It is in this guise that limit groups were studied extensively by 
Kharlampovich and Myasnikov \cite{KM1,KM2,KM3}.
They are also known as {\em{$\exists$-free groups}} \cite{res}, reflecting the
fact that these are precisely the finitely generated groups that have
the same existential theory as a free group.

More geometrically,  limit groups are the
finitely generated groups that have a Cayley graph in which each
ball of finite radius is isometric to a ball of the same radius in
some Cayley graph of a free group of fixed rank.

The name {\em{limit
group}} was introduced by Sela. His original definition
involved a certain limiting action on an $\mathbb R$-tree, but 
he also emphasized that these are
precisely the groups that arise when one takes limits of 
stable sequences of homomorphisms $\phi_n:G\to F$, where $G$ is an arbitrary
finitely generated group and $F$ is a free group;  {\em{stable}} means that
for each $g\in G$ either $I_g=\{n\in\N : \phi_n(g)=1\}$ or 
 $J_g=\{n\in\N : \phi_n(g)\neq 1\}$ is finite, and the {\em{limit}} of $(\phi_n)$
is the quotient of $G$ by $\{g\mid |I_g|=\infty\}$.

In his account \cite{S9}
of the outstanding problems concerning limit groups, Sela
asked whether the main theorem of \cite{bhms} could be extended to cover
limit groups. The present article represents the culmination of a project
to prove that it can. Building on ideas and results from \cite{bhms,
bh1, bh2, bh3, BM1} we prove:

\begin{thmA}\label{main} If $\G_1,\dots,\G_n$ are limit
groups and $S\subset \G_1\times\cdots\times\G_n$ is a subgroup
of type $\FP_n(\Q)$, then $S$ is virtually a direct product of  $n$
or fewer limit groups.
\end{thmA}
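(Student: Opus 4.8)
\noindent The plan is to argue by induction on $n$. First, $S$ is finitely generated, being of type $\FP_1(\Q)$; in particular the case $n=1$ is immediate, since a finitely generated subgroup of a limit group is again a limit group. For $n\ge 2$, observe that the statement to be proved is unaffected by passage to a subgroup of finite index in $S$, and that we may replace each $\G_i$ by the coordinate projection $p_i(S)$ --- a finitely generated, hence limit, group --- so that $S$ becomes a subdirect product of $\G_1,\dots,\G_n$. If $L_i:=S\cap\G_i$ is trivial for some $i$, then $S$ projects injectively to the product of the remaining $n-1$ factors while remaining of type $\FP_{n-1}(\Q)$, and the inductive hypothesis applies. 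So we may assume that $S$ is a subdirect product with $L_i\ne 1$ for every $i$.

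The heart of the matter is the claim that, under these hypotheses, $L_i$ is finitely generated for at least one index $i$. To establish this I would feed the finiteness hypothesis on $S$ --- that $H_*(S;\Q)$ is finite dimensional in degrees $\le n$ --- into the Lyndon--Hochschild--Serre spectral sequences of the extensions $1\to L_i\to S\to \-{S}_i\to 1$, where $\-{S}_i$ denotes the image of $S$ in $\prod_{j\ne i}\G_j$, exploiting crucially that each $\G_j$ is of type $\FP_\infty$ (and is coherent), so that the homology contributed by the ``$\G_j$-directions'' is finite dimensional and can be played off against $H_*(S;\Q)$. This is the step I expect to be the main obstacle: a priori none of the $L_i$ or $\-{S}_i$ is even known to be finitely generated, so the required finiteness has to be extracted from the behaviour of these spectral sequences, and some care is needed because $\FP_n(\Q)$ is strictly weaker than $\FP_n(\Z)$. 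This is exactly where the limit-group-specific results and the asymmetric homological machinery of \cite{bhms,bh2,bh3,BM1} are brought to bear.

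Granting the claim, relabel so that $L_n$ is finitely generated. Being a finitely generated subgroup of the limit group $\G_n$, it is itself a limit group, hence of type $\FP_\infty$. The quotient $\-{S}:=S/L_n$ is the image of $S$ in $\G_1\times\cdots\times\G_{n-1}$, again a subdirect product there; and since $L_n$ is of type $\FP_\infty$ while $S$ is of type $\FP_n(\Q)$, the group $\-{S}$ is of type $\FP_n(\Q)$, hence in particular of type $\FP_{n-1}(\Q)$. By the inductive hypothesis, $\-{S}$ is virtually a direct product of at most $n-1$ limit groups.

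It remains to reassemble $S$ from $L_n$ and $\-{S}$. Because $S$ surjects onto $\G_n$ and $L_n$ is normal in $S$, the subgroup $L_n$ is in fact normal in $\G_n$, so the induced action of $\-{S}$ on $L_n$ factors through $\G_n/C_{\G_n}(L_n)$, which is severely constrained by the commutative transitivity of $\G_n$. Using this, together with the finiteness of the cohomological dimension of limit groups and the analysis of subdirect products of two limit groups carried out in \cite{bh1,bh2}, one shows that, after passing once more to a subgroup of finite index, the extension $1\to L_n\to S\to\-{S}\to 1$ splits as a direct product. Then $S$ is virtually $L_n\times\-{S}$, a direct product of at most $1+(n-1)=n$ limit groups, which completes the induction. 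I expect this last reassembly to be the second most delicate point, since the direct-product splitting genuinely relies on first passing to a subgroup of finite index.
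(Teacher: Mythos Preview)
Your overall architecture is reasonable, and the preliminary reductions (replacing $\G_i$ by $p_i(S)$, discarding trivial $L_i$) match the paper's Proposition~\ref{assume}. The reassembly step is in fact easier than you suggest: once $L_n$ is finitely generated and normal in a non-abelian limit group $\G_n$, it has finite index in $\G_n$ by \cite{bh1}; replacing $\G_n$ by $L_n$ then gives $\G_n\subset S$, whence $S=L_n\times\bar S$ on the nose, with no need for commutative transitivity or cohomological-dimension arguments. (You do, however, need to treat abelian factors separately, as in Proposition~\ref{assume}(4); your sketch omits this.)

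The genuine gap is your ``heart of the matter'' claim that some $L_i$ is finitely generated. The proposed attack via the LHS spectral sequence of $1\to L_i\to S\to\bar S_i\to 1$ does not get off the ground: you know nothing a~priori about the homological finiteness of $\bar S_i$ --- it is a subdirect product of $n-1$ limit groups with no $\FP$ hypothesis whatsoever --- so the $E^2$ page gives you no purchase on $H_*(L_i;\Q)$. The argument is circular: controlling $\bar S_i$ is essentially the same problem one dimension down.

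The paper does not prove your claim directly; instead, after the reductions it establishes the stronger statement that $S$ has \emph{finite index} in $\G_1\times\cdots\times\G_n$ (Theorem~\ref{main3}). The route is unexpected: one first shows (Theorem~\ref{propvna}) that every pairwise projection $S\to\G_i\times\G_j$ has finite-index image and that each $\G_i/L_i$ is virtually nilpotent; one proves separately the special case where $S$ is the kernel of a map $\G\to\Z$ (Theorem~\ref{theoremkernelZ}); and one then exploits the virtual nilpotence of $\G/L$ to interpolate a subnormal chain with cyclic-or-finite steps between a finite-index subgroup of $S$ and such a kernel, propagating infinite-dimensional homology down the chain. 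The turn towards nilpotent quotients, driven by the higher-commutator trick of Lemma~\ref{multicomm}, is the key structural idea absent from your sketch.
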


Combining this result with the fact that every finitely generated residually
free group can be embedded into a direct product of finitely many limit groups
(\cite[Corollary 2]{KM2}, \cite[Claim 7.5]{S1}), we obtain:

\begin{corollary}\label{FPinfty} Every residually free group of type
$\FP_\infty(\Q)$ is virtually a direct product of a finite number of limit groups.
\end{corollary}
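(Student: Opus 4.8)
The plan is to deduce Corollary~\ref{FPinfty} from Theorem~\ref{main} together with the embedding theorem of Kharlampovich--Myasnikov and Sela. So let $S$ be a finitely generated residually free group of type $\FP_\infty(\Q)$. By \cite[Corollary 2]{KM2} (equivalently \cite[Claim 7.5]{S1}) there are limit groups $\G_1,\dots,\G_n$ and an embedding $S\hookrightarrow \G_1\times\cdots\times\G_n$; identify $S$ with its image. The only thing to check before applying Theorem~\ref{main} is that $S$ has type $\FP_n(\Q)$ as a subgroup of this particular direct product. But type $\FP_\infty(\Q)$ implies type $\FP_m(\Q)$ for every finite $m$, in particular for $m=n$, and this is a property of the abstract group $S$, independent of the ambient group. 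Hence Theorem~\ref{main} applies and gives a finite-index subgroup $S_0\le S$ that is a direct product of at most $n$ limit groups.

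It remains only to observe that ``virtually a direct product of finitely many limit groups'' is exactly the conclusion we want, so we are done. If one wished to make the statement slightly cleaner one could note in passing that the number of factors can be taken to be at most $n$, where $n$ is the number of limit groups in any fixed embedding of $S$; but the corollary as stated asks only for a finite number.

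The one genuinely substantive point, and the step I expect to warrant a sentence of justification rather than being wholly routine, is the invocation of the embedding theorem: a finitely generated residually free group need not a priori be finitely \emph{presented}, so one must be sure that the Kharlampovich--Myasnikov/Sela result applies to all finitely generated residually free groups and not merely to finitely presented ones. This is indeed the content of \cite[Corollary 2]{KM2} and \cite[Claim 7.5]{S1}, which embed any finitely generated residually free group into a product of finitely many limit groups; limit groups themselves are finitely presented (indeed of type $\FP_\infty$), but that fact is not needed here. Everything else is a formal consequence: monotonicity of the finiteness properties $\FP_m(\Q)$ in $m$, and the fact that being of type $\FP_n(\Q)$ depends only on the isomorphism type of the group.
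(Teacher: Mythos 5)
Your proposal is correct and follows exactly the paper's route: the authors likewise obtain the corollary by combining Theorem~\ref{main} with the Kharlampovich--Myasnikov/Sela embedding of a finitely generated residually free group into a finite direct product of limit groups, using that $\FP_\infty(\Q)$ implies $\FP_n(\Q)$. Your added remark that the embedding theorem applies to all finitely generated (not just finitely presented) residually free groups is the right point to flag, and it is indeed covered by the cited results.
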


B.~Baumslag \cite{benjy} proved that a 
finitely generated,
residually free group is fully residually free (i.e.~a limit group)
unless it contains a subgroup isomorphic to $F\times\Z$, where $F$
is a free group of rank 2. Corollary
\ref{FPinfty} together with the methods used to prove Theorem \ref{main}
yield the following generalization of Baumslag's result:

\begin{corollary} Let $\G$ be a residually free group of type $\FP_n(\Q)$
where $n\ge 1$, let $F$ be a free group of rank 2 and let
$F^n$ denote the direct product of $n$ copies of $F$. Either $\G$
contains a subgroup isomorphic to $F^n\times\Z$ or else $\G$ is
virtually a direct product of $n$ or fewer limit groups.
\end{corollary}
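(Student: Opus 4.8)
The plan is to deduce this directly from Theorem~\ref{main}, the embedding theorem of Kharlampovich--Myasnikov and Sela, and two standard properties of limit groups: that they are CSA (maximal abelian subgroups are malnormal), and that every non-abelian limit group contains a copy of $F$. No new induction is needed, and B.~Baumslag's theorem will be recovered as the case $n=1$.

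Since $\G$ has type $\FP_1(\Q)$ it is finitely generated, so by \cite[Corollary 2]{KM2} (equivalently \cite[Claim 7.5]{S1}) it embeds in a direct product of finitely many limit groups; corestricting each coordinate projection to its image, which is a finitely generated subgroup of a limit group and hence again a limit group, we obtain a subdirect embedding. First I would fix such an embedding $\G\hookrightarrow\G_1\times\cdots\times\G_m$, with every projection $\G\to\G_i$ onto, in which $m$ is as small as possible, and set $N_i=\G\cap\G_i=\ker(\G\to\prod_{j\ne i}\G_j)$. Minimality forces every $\G_i$ and every $N_i$ to be non-trivial (otherwise $\G$ would embed subdirectly in a product of $m-1$ limit groups), and it forces there to be at most one abelian factor $\G_i$ (any two abelian factors can be amalgamated into a single $\Z^d$ and the embedding re-corestricted, lowering $m$). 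Next, using the CSA property, the usual malnormality argument shows that a non-trivial normal subgroup of a non-abelian limit group is itself non-abelian; hence when $\G_i$ is non-abelian the group $N_i\le\G$ is non-abelian, and picking non-commuting $x,y\in N_i$ and noting that $\langle x,y\rangle$ is a finitely generated non-abelian limit group, we get that $N_i$ contains a copy of $F$. When $\G_i$ is abelian the non-trivial subgroup $N_i\le\G$ contains a copy of $\Z$. Finally, the $N_i$ commute pairwise and generate their internal direct product inside $\G$.

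With this set-up the argument splits on the size of $m$. If $m\le n$, then since type $\FP_n(\Q)$ implies type $\FP_m(\Q)$, Theorem~\ref{main} applies to $\G\le\G_1\times\cdots\times\G_m$ and exhibits $\G$ as virtually a direct product of at most $m\le n$ limit groups, which is the second alternative. If $m\ge n+1$, let $r$ be the number of non-abelian factors; since there is at most one abelian factor, $r\ge m-1\ge n$. If $r\ge n+1$, then $N_1\times\cdots\times N_{n+1}\le\G$ is an internal direct product of $n+1$ subgroups each containing $F$, so $\G$ contains $F^{n+1}$ and hence $F^n\times\Z$. If $r=n$, then $m=n+1$, there is exactly one abelian factor $A$, and $N_A\ne1$, so $N_1\times\cdots\times N_n\times N_A\le\G$ contains $F^n\times\Z$. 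In either case the first alternative holds, which finishes the proof. The single substantial ingredient is Theorem~\ref{main}; within the present deduction the only points needing care are the bookkeeping that makes the minimal subdirect embedding have all $N_i\ne1$ and at most one abelian factor, and the CSA-based fact about normal subgroups, and I do not anticipate any further obstacle.
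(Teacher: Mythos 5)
Your argument is correct and is, as far as one can tell, exactly the deduction the authors intend (the paper states this corollary without proof, attributing it to ``the methods used to prove Theorem A''): take a minimal subdirect embedding into limit groups, apply Theorem A when the number of factors is at most $n$, and otherwise extract at least $n$ pairwise-commuting non-abelian normal subgroups $N_i$ (plus possibly one abelian one) to produce $F^n\times\Z$. The only ingredients you use without justification --- that limit groups are CSA, so a non-trivial normal subgroup of a non-abelian limit group is non-abelian, and that a $2$-generated non-abelian limit group is free of rank $2$ (equivalently, every non-abelian limit group contains $F$) --- are standard facts, so I see no gap.
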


We also prove that if a subgroup of a direct product 
of $n$ limit groups 
fails to be of type $\FP_n(\Q)$, then
one can detect this failure in the homology of a subgroup of finite
index. 

\begin{thmA}\label{split}
Let $\G_1,\dots,\G_n$ be limit groups 
and  let $S\subset \G_1\times\cdots\times\G_n$ be a 
finitely generated subgroup
with 
$L_i=\G_i\cap S$ non-abelian for $i=1,\dots,n$.

If $L_i$ is finitely generated for $1\le i\le r$ and not finitely
generated for $i>r$, then there is a subgroup of finite index $S_0\subset S$
such that $S_0=S_1\times S_2$, where $S_1$ is the direct
product of the limit groups $S_0\cap\G_i,\ i\le r$ and (if $r<n$) 
$S_2=S_0\cap(\G_{r+1}\times\cdots\times\G_n)$ has
$H_k(S_2;\mathbb Q)$  infinite dimensional for some $k\le n-r$.
\end{thmA}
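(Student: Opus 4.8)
The plan is to establish the two assertions of the theorem in turn: first the direct-product splitting $S_0=S_1\times S_2$, and then the infinite-dimensionality of some $H_k(S_2;\Q)$ with $k\le n-r$.

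\emph{The splitting.} First I would pass to the subdirect situation: replacing each $\G_i$ by $p_i(S)$, where $p_i$ is the $i$-th projection, leaves $L_i=S\cap\G_i$ unchanged, makes each $p_i(S)$ a limit group (being a finitely generated subgroup of one), and makes $L_i$ a normal subgroup of $p_i(S)$. The crucial input is then the structural fact that \emph{a finitely generated, non-abelian, normal subgroup of a limit group has finite index} --- this is where the hypothesis that $L_i$ is non-abelian is used, and for $i\le r$ it is the only place it is used. Granting this, $[\G_i:L_i]<\infty$ for $i\le r$, so $S_0:=S\cap(L_1\times\cdots\times L_r\times\G_{r+1}\times\cdots\times\G_n)$ has finite index in $S$. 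Each $L_i$ with $i\le r$ lies in $S_0$, hence so does the direct factor $L_1\times\cdots\times L_r$ of the group $L_1\times\cdots\times L_r\times\G_{r+1}\times\cdots\times\G_n$; therefore $S_0=(L_1\times\cdots\times L_r)\times\bigl(S_0\cap(\G_{r+1}\times\cdots\times\G_n)\bigr)=S_1\times S_2$, where $S_0\cap\G_i=L_i$ is finitely generated (hence a limit group) for each $i\le r$.

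\emph{The homology of $S_2$.} Since $S_0$ is finitely generated and $S_2\cong S_0/S_1$, the group $S_2$ is finitely generated; after replacing each $\G_i$ with $i>r$ by $p_i(S_2)$ it becomes a finitely generated subdirect product of $m:=n-r$ limit groups $\G_{r+1},\dots,\G_n$ in which every $L_i=S_2\cap\G_i$ is non-abelian (automatically, as abelian subgroups of limit groups are finitely generated) and is not finitely generated. I would first deduce that $S_2$ is not of type $\FP_m(\Q)$: otherwise Theorem \ref{main} would exhibit $S_2$ as virtually a direct product of at most $m$ limit groups, and a finite-index subgroup of that shape --- being simultaneously a subdirect product of $m$ limit groups with all coordinate intersections non-trivial --- would have those intersections commensurable with direct factors, in particular finitely generated, contradicting the non-finite-generation of the $L_i$. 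It then remains to apply the homological detection principle for subdirect products of limit groups: a finitely generated subdirect product of $m$ limit groups that fails $\FP_m(\Q)$ has $H_k(\,\cdot\,;\Q)$ infinite-dimensional for some $k\le m$. Applied to $S_2$ this yields the required $k\le n-r$.

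\emph{Where the difficulty lies.} The splitting is formal once the finite-index lemma is available, and the passage from Theorem \ref{main} to ``$S_2$ is not $\FP_m(\Q)$'' is elementary bookkeeping with subdirect products. The genuine obstacle is the homological detection principle, which I would prove by induction on $m$ via the Lyndon--Hochschild--Serre spectral sequence of $1\to L_n\to S_2\to Q\to 1$, where $Q=S_2/L_n$ is the image of $S_2$ in $\G_{r+1}\times\cdots\times\G_{n-1}$, a finitely generated subdirect product of $m-1$ limit groups. The base case $m=1$ is the statement that an infinitely generated subgroup of a limit group has infinite-dimensional first rational homology, which one obtains from the hierarchical (toral relatively hyperbolic) structure of limit groups, or cites from \cite{bh1,bh2,bh3}. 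In the inductive step there are two cases: if $Q$ fails $\FP_{m-1}(\Q)$ one feeds the inductive hypothesis for $Q$ into the spectral sequence, while if $Q$ is of type $\FP_{m-1}(\Q)$ --- hence, by Theorem \ref{main}, virtually a direct product of limit groups and so of type $\FP_\infty(\Q)$ --- one uses instead that $H_1(L_n;\Q)$ is infinite-dimensional to locate an infinite-dimensional entry on the $E^2$-page in total degree at most $m$. In either case the delicate part, and the place where the machinery of \cite{bhms,bh1,bh2,bh3} is genuinely needed, is to certify that this infinite-dimensionality is not destroyed by the differentials and so survives to the abutment $H_\ast(S_2;\Q)$.
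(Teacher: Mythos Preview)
Your splitting argument is correct and coincides with the paper's: after replacing each $\G_i$ by $p_i(S)$, the finitely generated non-abelian normal subgroups $L_i$ ($i\le r$) have finite index in $\G_i$ by \cite{bh1}, and $S_0:=S\cap(L_1\times\cdots\times L_r\times\G_{r+1}\times\cdots\times\G_n)$ splits as $(L_1\times\cdots\times L_r)\times\hat S_2$.

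For the homological assertion the paper is much more direct than you: it applies Theorem~\ref{main3} to $\hat S_2$. This $\hat S_2$ is a finitely generated subdirect product of $n-r$ non-abelian limit groups with non-trivial intersections $L_i$; since those $L_i$ are infinitely generated, $\hat S_2$ has infinite index, so case~(2) of Theorem~\ref{main3} furnishes a finite-index subgroup with some $H_k$ infinite-dimensional for $k\le n-r$, and one shrinks $S_0$ accordingly. Your ``homological detection principle'' \emph{is} Theorem~\ref{main3} (here ``fails $\FP_m(\Q)$'' and ``infinite index'' coincide), modulo the caveat that its conclusion must allow passage to a finite-index subgroup --- as noted in \S\ref{ss:fi}, finite-dimensionality of homology need not pass to finite-index subgroups. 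So your detour through Theorem~\ref{main} is harmless but superfluous; in the paper's logic both Theorem~\ref{main} and Theorem~\ref{split} are short corollaries of Theorem~\ref{main3}, not the other way round.

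The substantive gap is your sketched inductive proof of the principle itself. In your Case~1 the infinite-dimensional $E^2_{j,0}=H_j(Q;\Q)$ has outgoing differentials $d_r\colon E^r_{j,0}\to E^r_{j-r,r-1}$ landing in terms built from the infinite-dimensional modules $H_{q\ge 1}(L_n;\Q)$, and there is no a~priori reason the successive kernels stay infinite-dimensional; in your Case~2 the coinvariants $H_1(L_n;\Q)_Q$ are governed by the action of $\G_n/L_n$, about which nothing is yet known. You correctly flag this as ``the delicate part'' but supply no mechanism. The paper's mechanism is of an entirely different shape and is the real content of the whole paper: one first proves (Theorem~\ref{propvna}) that all the quotients $\G_i/L_i$ are \emph{virtually nilpotent}; this yields a subnormal chain from $S$ up to $\G$ with finite or infinite-cyclic slices (Corollary~\ref{chain}), along which Lemma~\ref{chain2} pushes finite-dimensionality of low-degree homology upward, reducing everything to the single special case where $S$ is the kernel of a map $\G\to\Z$ (Theorem~\ref{theoremkernelZ}), handled separately using that $\Q[t,t^{-1}]$ is a PID. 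A naive induction on the number of factors does not uncover this nilpotent structure, and without it the spectral-sequence bookkeeping does not close.
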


In Section \ref{s:last} we shall prove a more technical version
of Theorem \ref{split} and account for abelian intersections.
  
 Theorems \ref{main} and \ref{split} are 
the exact analogues  of  Theorems A and B of \cite{bhms}. 
In Section  \ref{reductions} 
we introduce a sequence of reductions that will allow
us to deduce both theorems from the following result 
(which, conversely, is 
an easy consequence of Theorem \ref{split}). We remind the reader
that a subgroup of a direct
product is called a {\em{subdirect product}} if its projection to each 
factor is surjective. 

\begin{thmA}\label{main3} Let $\G_1,\dots,\G_n$ be non--abelian limit groups 
and  let $S\subset \G_1\times\cdots\times\G_n$ be a 
finitely generated subdirect product which intersects each factor non-trivially. 
Then either :
\begin{enumerate}
\item
$S$ is of finite index; {\em{or}}

\item $S$ is of infinite index and has a finite-index subgroup 
$S_0 < S$ such that $H_j(S_0;\Q)$ has infinite $\Q$-dimension
for some $j\leq n$.
\end{enumerate}
\end{thmA}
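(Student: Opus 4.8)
The plan is to proceed by induction on $n$, reducing the question to understanding how $S$ sits inside the product via its coordinate projections. First I would consider the projection $p_n : S \to \G_n$ and its kernel $K = S \cap (\G_1 \times \cdots \times \G_{n-1})$; since $S$ is subdirect, $p_n$ is onto, and $L_n = S \cap \G_n$ is a non-trivial normal subgroup of $\G_n$. A limit group is commutative-transitive and freely indecomposable-by-pieces, so a non-trivial normal subgroup of the non-abelian limit group $\G_n$ is itself non-abelian and of finite index would be the lucky case; in general one should expect $L_n$ to have finite index in $\G_n$ exactly when $S$ has finite index in the product, which is essentially case (1). So the substance is case (2): when at least one $L_i$ fails to have finite index, I want to produce a finite-index subgroup with some $H_j(-;\Q)$ infinite-dimensional for $j \le n$.

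The engine for this is the Bieri--Strebel / $\FP_n$ machinery adapted to subdirect products, exactly as in \cite{bhms}. The key idea from that paper is the ``virtual surjection'' criterion: if $S$ virtually surjects onto $\G_i \times \G_j$ for every pair $i,j$ then $S$ is of type $\FP_2$ and in fact one gets strong structural control, whereas failure of this property forces homological pathology. Concretely, I would pass to a finite-index subgroup so that the projection of $S$ to each pair of factors is a subdirect product, and then analyze the ``diagonal'' defect. The cleanest approach: let $N_i = S \cap \G_i$; after replacing $S$ by a finite-index subgroup we may assume each $N_i$ is normal in $\G_i$ and the quotient $Q = S/(N_1 \times \cdots \times N_n)$ maps onto each $\G_i/N_i$ as a subdirect product of these quotients with trivial pairwise intersections, i.e.\ $Q$ embeds in $\prod \G_i/N_i$ as a subdirect product that is ``full'' on pairs. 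If some $N_i$ is not of finite index then $\G_i/N_i$ is an infinite limit group (limit groups are torsion-free, so ``infinite'' is automatic once non-trivial), and I want to run an inductive argument on the fibration $1 \to N_n \to S \to p_n(S) \to 1$: combine a spectral-sequence / LHS argument with the inductive hypothesis applied to the image of $S$ in $\G_1 \times \cdots \times \G_{n-1}$, using the results of \cite{bh1,bh2,bh3,BM1} on the structure of limit groups (their action on trees, the existence of retractions onto free subgroups, and finiteness of their homology with $\Q$-coefficients in each degree) to control the $E_2$-page.

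The real input is an ``$n$-$(1,2,3,\dots)$'' type theorem: I expect that one proves, by induction, that if $S$ is a subdirect product of $n$ limit groups with all coordinate intersections non-trivial and $S$ is of type $\FP_k(\Q)$, then $S$ virtually surjects onto every sub-product of $k$ of the factors; taking $k = n$ and noting that a subdirect product virtually surjecting onto the whole product $\G_1 \times \cdots \times \G_n$ must have finite index (this uses that each $\G_i$ is non-abelian, via a Goursat-type / commutator argument on the quotients $\G_i/N_i$) gives alternative (1). The contrapositive gives alternative (2): if $S$ is of infinite index it is not $\FP_n(\Q)$, and one extracts from the proof of the $\FP_k$ criterion an explicit finite-index subgroup $S_0$ and a degree $j \le n$ with $\dim_\Q H_j(S_0;\Q) = \infty$ --- here the Stallings--Bieri construction, realized inside the product via a map to $\Z$ or to a free group killing part of some $\G_i$, is what manufactures the infinite-dimensional homology.

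The main obstacle, I expect, is the inductive step that controls the homology of the fiber $N_n = S \cap (\G_1 \times \cdots \times \G_{n-1})$ --- this is itself a subdirect-type subgroup of a product of $n-1$ limit groups, but a priori \emph{not} finitely generated, so one cannot naively apply the inductive hypothesis; circumventing this requires the ``virtually surjects onto pairs'' reductions of Section \ref{reductions} together with the asymmetric homological finiteness results about limit groups (their $\FP_\infty$-ness over $\Q$ and the structure of their subdirect squares from \cite{BM1}), and then a careful bookkeeping in the Lyndon--Hochschild--Serre spectral sequence to locate a surviving infinite-dimensional entry in total degree $\le n$. Getting the bound $j \le n$ rather than something weaker is where the precise form of the induction matters, and that is the technical heart of the argument.
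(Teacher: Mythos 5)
Your sketch correctly identifies several ingredients that do appear in the paper -- the virtual surjection onto pairs of factors, the Lyndon--Hochschild--Serre spectral sequence, and a Stallings--Bieri computation for the kernel of a map to $\Z$ -- but it has a genuine gap at the exact point where the paper's argument turns: you never explain how to \emph{produce} the epimorphism to $\Z$ (from a finite-index subgroup of $\G_1\times\cdots\times\G_n$) whose kernel contains a finite-index subgroup of $S$. Saying the infinite-dimensional homology is ``manufactured via a map to $\Z$ or to a free group killing part of some $\G_i$'' presupposes that such a map exists with $S$ (virtually) in its kernel, and for a general infinite-index subdirect product there is no a priori reason for this. The paper's mechanism is the unexpected nilpotency step: the left-normed commutator identity $[N_{1,j},\dots,N_{n,j}]\subset L_j$ from \cite{BM1}, combined with a Mayer--Vietoris/HNN argument and Theorem \ref{index} to show each $N_{i,j}$ has finite index in $\G_j$, forces each $\G_j/L_j$ to be \emph{virtually nilpotent} of class at most $n-2$. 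Only then does Lemma \ref{nilp} -- an infinite-index subgroup of a finitely generated virtually nilpotent group lies, after passing to finite index, in the kernel of a map to $\Z$ -- deliver the required epimorphism. The paper even flags that this lemma fails for polycyclic groups (lattices in $\mathrm{Sol}$), so the nilpotency is not a convenience but the load-bearing wall; your proposal has no substitute for it.

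A second, related problem is that your ``real input'' -- that $\FP_k(\Q)$ implies virtual surjection onto every $k$-tuple of factors -- is asserted rather than proved, and the known proofs of that statement for $k\ge 3$ themselves go through the nilpotent-quotient argument, so you cannot invoke it without circularity. Finally, even granting the map to $\Z$ and Theorem \ref{theoremkernelZ}, you still need to transfer infinite-dimensionality of $H_k(N_0;\Q)$ for the kernel $N_0$ down to a finite-index subgroup $S_0<S$ in degree $j\le n$; the paper does this via Corollary \ref{chain} (a subnormal chain from $S_0$ to $N_0$ with finite or cyclic quotients, again using nilpotency) and the two-column spectral sequence of Lemma \ref{chain2}. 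Your bookkeeping worry about the fiber $S\cap(\G_1\times\cdots\times\G_{n-1})$ not being finitely generated is legitimate, but it is resolved in the paper by the Noetherian property of $\Q[t,t^{-1}]$ in Proposition 6.1 and Theorem \ref{thm14}, not by the pairwise-surjection reductions you propose.
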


For simplicity of exposition, the homology of a group $G$ in this
paper will almost always be with coefficients in a $\Q\,G$-module -- typically the
trivial module $\Q$.  But with minor modifications, 
our arguments also apply
with other coefficient modules, giving corresponding results under
the finiteness conditions $\FP_n(R)$ for other suitable rings $R$. 

A notable aspect of the proof of the above theorems is that following
a raft of reductions based on geometric methods, the proof takes
an unexpected twist in the direction of nilpotent groups.
The turn of events that leads us in
this direction is explained in Section \ref{s:elements}
-- it begins with a
simple observation about higher commutators from \cite{BM1}
and proceeds via a spectral sequence argument.

Several of our results shed light on the nature of arbitrary finitely
presented subgroups of direct products of limit groups, most notably
Theorem 4.2. These results suggest that there is
a real prospect of understanding all such subgroups, i.e.~all
finitely presented residually free groups. We  take up this
challenge in  \cite{bhms3}, where we describe precisely which subdirect
products of limit groups are finitely presented and present a solution
to the conjugacy and membership problems for these subgroups
(cf.~\cite{BM1}, \cite{BW}). But
the isomorphism problem for finitely presented residually free groups remains
open, and beyond that lie many further challenges.
For example, with applications of the surface-group case to K\"ahler 
geometry in mind \cite{DG},
one would like to know if all finitely presented subdirect
products of limit groups satisfy a polynomial isoperimetric inequality. 

\smallskip

We are grateful to the many colleagues with whom we have had useful
discussions at various times about aspects of this work, particularly
Emina Alibegovi\'c, Mladen Bestvina, Karl Gruenberg, Peter Kropholler, Zlil Sela
and Henry Wilton. We are also grateful to the anonymous referee
for his/her helpful comments.

\section{Limit groups and their decomposition}

Since this is the fourth in a series of papers on limit groups
(following \cite{bh1,bh2,bh3}), we shall only recall the minimal
necessary amount of information about them. The reader unfamiliar
with this fascinating class of groups should consult 
the introductions in \cite{AB,BF}, the original papers of
Sela \cite{S1,S2,S6},  or those of
 Kharlampovich and Myasnikov \cite{KM1,KM2,KM3} where
the subject is approached from a perspective more in keeping with
traditional combinatorial group theory; a further
 perspective is developed in \cite{CG}.
 
\subsection{Limit groups}

Our results rely  on  the fact that limit groups
are the finitely generated subgroups of 
$\omega$-residually free tower ($\o$-rft)
groups 
\cite[Definition 6.1]{S1} (also known as NTQ-groups
\cite{KM2}).  A useful summary of Sela's proof
of this result was given by Alibegovi\'c and Bestvina in
the appendix to \cite{AB} (cf.~\cite[(1.11), (1.12)]{S2}).
The equivalent result of Kharlampovich and Myasnikov
 \cite[Theorem 4]{KM2} is presented in a more algebraic manner. 

An $\o$-rft group is the fundamental group of a
tower space  assembled from graphs, tori
and surfaces in a hierarchical manner. The number of
stages in the process of assembly is the {\em{height}}
of the tower. Each stage in the construction involves
the attachment of an orientable surface along its boundary, or
the attachment of an $n$-torus $T$  along an
embedded circle representing a primitive element
of $\pi_1T$. (There are additional constraints in each
case.)

The {\em{height}} 
of a limit group $\G$ is the minimal height of an 
$\o$-rft group
that has a subgroup isomorphic to $\G$. Limit
groups of height $0$ are free products of 
finitely many free abelian groups (each of finite rank)
and surface
groups of Euler characteristic at most $-2$.

The splitting described in the following proposition is obtained as follows:
embed $\G$ in an $\omega$-rft group $G$ of minimal height, take  the
graph of groups decomposition of $G$  that   the Seifert-van Kampen
Theorem associates to the addition of the final block in the tower, 
then apply Bass-Serre theory to get an induced graph of 
groups decomposition  of $\G$.

Recall that a graph-of-groups decomposition is termed
{\em{$k$-acylindrical}} if in the action on the associated
Bass-Serre tree, the stabilizer of each geodesic edge-path of length
greater than $k$ is trivial; if the value of $k$ is unimportant,
one says simply that the decomposition is {\em{acylindrical}}.

\begin{prop}\label{graph} If $\G$ is a freely-indecomposable
limit group of height $h\ge 1$, then it is the fundamental group of a
finite graph of groups that has infinite cyclic edge groups and has a
vertex group that is a non-abelian limit group of height $\le h-1$.
This decomposition 
may be chosen to be 2-acylindrical.
\end{prop}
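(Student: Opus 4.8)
The plan is to realize $\G$ as a finite-index-free (in fact, isomorphic) subgroup of an $\o$-rft group $G$ of minimal height $h$, and then to extract the desired graph-of-groups decomposition of $\G$ by combining the Seifert--van Kampen picture of the top block of the tower with Bass--Serre theory. Concretely, I would first invoke the structure theorem (Sela \cite{S1}, Kharlampovich--Myasnikov \cite{KM2}) that every limit group embeds in an $\o$-rft group, and choose such an embedding $\G\hookrightarrow G$ with $\operatorname{height}(G)=h$, which is exactly the height of $\G$ by definition. The tower space $X$ for $G$ is built in $h$ stages; write $X = X' \cup B$, where $B$ is the final block (a surface attached along its boundary, or an $n$-torus attached along a primitive circle) and $X'$ is the tower of height $h-1$ underneath it. Seifert--van Kampen applied to this decomposition of $X$ expresses $G$ as the fundamental group of a finite graph of groups $\mathbb{G}$ whose edge groups are infinite cyclic (generated by the attaching circle(s)) and one of whose vertex groups is $\pi_1 X'$, which is an $\o$-rft group of height $h-1$; the other vertex group is the fundamental group of the surface or torus being attached.

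Next I would pass from $G$ to $\G$ using Bass--Serre theory: $\G$ acts on the Bass--Serre tree $T$ of $\mathbb{G}$, and the quotient graph of groups $\mathbb{G}_\G = T/\G$ gives a decomposition of $\G$ with vertex groups the intersections of $\G$ with conjugates of the vertex groups of $\mathbb{G}$, and edge groups the intersections of $\G$ with conjugates of the (cyclic) edge groups. Since subgroups of infinite cyclic groups are infinite cyclic or trivial, the edge groups are infinite cyclic or trivial; the trivial ones can be collapsed (equivalently, absorbed into a free-product decomposition), but $\G$ is assumed freely indecomposable, so after this tidying all edge groups are infinite cyclic. One also needs this graph of groups to be finite: this follows from $\G$ being finitely generated together with the fact — standard for limit groups, and in any case extractable from the tower structure — that $\G$ acts on $T$ with finitely generated edge and vertex stabilizers and finitely many orbits of edges (cocompactly, after the collapse). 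The key point then is that at least one vertex group of $\mathbb{G}_\G$ is a non-abelian limit group of height $\le h-1$: the relevant vertex group $\G \cap {}^g\pi_1X'$ is a finitely generated subgroup of the height-$(h-1)$ $\o$-rft group $\pi_1 X'$, hence a limit group of height $\le h-1$; that it can be taken non-abelian uses that $\G$ itself is non-abelian and freely indecomposable of positive height, so it cannot be supported entirely on the abelian/surface pieces of the top block together with cyclic edge groups — if every vertex group met only ${}^g\pi_1 B$, then $\G$ would be a subgroup of a surface or abelian group amalgamated along cyclic subgroups in a way incompatible with having height $h\ge 1$.

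Finally, the acylindricity claim: the top-block amalgamation in the tower is, by the constraints built into the definition of an $\o$-rft tower (the attaching circle is primitive, and the surface has negative Euler characteristic with the appropriate non-degeneracy), $2$-acylindrical for the action of $G$ on $T$ — this is one of the standard properties recorded in the references \cite{S1, AB, KM2}. Acylindricity is inherited by the sub-action of $\G\le G$ on the same tree, so the induced decomposition of $\G$ is $2$-acylindrical as well (one must check that the collapse of trivial edge groups does not destroy this, which is immediate since collapsing can only shorten the trivially-stabilized paths). The main obstacle, and the step requiring the most care, is the passage from $G$ to $\G$: one must be sure that the induced graph of groups is \emph{finite} and that a \emph{non-abelian} vertex group of height $\le h-1$ survives — this is where finite generation of $\G$, freely indecomposability, and the minimality of $h$ all have to be used in concert, and it is the place where one leans most heavily on the detailed structure theory of limit groups from the earlier papers in the series.
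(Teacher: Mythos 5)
Your proposal follows exactly the route the paper takes: it indicates only the sketch given in the paragraph preceding the proposition, namely embed $\G$ in an $\o$-rft group $G$ of minimal height $h$, take the graph-of-groups decomposition of $G$ that Seifert--van Kampen associates to the final block of the tower, and then use Bass--Serre theory to induce a decomposition of $\G$, with the cited structure theory supplying cyclicity of edge groups and $2$-acylindricity. Your elaborations (collapsing trivial edge groups via free indecomposability, finiteness of the quotient graph from finite generation, inheritance of acylindricity by the sub-action) are the right supporting details and are consistent with the references the paper relies on.
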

 
Note also that any non-abelian limit group of height $0$  splits
as $A\ast_C B$ with $C$ infinite-cyclic or trivial, and this splitting
is 1-acylindrical for surface groups, and 0-acylindrical for free products.
 
\subsection{The class of groups $\cC$}

We define a class of finitely presented groups
$\cC$ in a hierarchical manner; it is the union of the
classes $\cC_n$ defined as follows.

At {\em{level $0$}} we have the class $\cC_0$ consisting
of free products $A\ast B$ of non-trivial,
finitely presented groups, where at least one of $A$ and $B$
has cardinality at least $3$ -- in other words, all finitely
presented non-trivial free products with the exception of $\Z_2*\Z_2$. 
 
A group lies in $\cC_n$ if and only if it is the fundamental group
of a finite, acylindrical graph of finitely presented groups, where all of the
edge groups are cyclic, and at least one of the vertex groups
lies in $\cC_{n-1}$.

\bigskip

The following is an immediate consequence of Proposition \ref{graph}.

\begin{cor}\label{LinC}
All non-abelian limit groups lie in $\cC$.
\end{cor}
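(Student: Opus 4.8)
The plan is to induct on the height $h$ of a non-abelian limit group $\G$, disposing of free decomposability before invoking the height-dependent structure theory. If $\G$ is freely decomposable, write $\G=\G_1\ast\G_2$ with both factors non-trivial. Limit groups are torsion-free and finitely presented, and a free factor of a finitely presented group is again finitely presented, so each $\G_i$ is a non-trivial finitely presented group and, being torsion-free, has cardinality at least $3$; hence $\G\in\cC_0\subseteq\cC$ directly. So from now on I may assume that $\G$ is freely indecomposable.

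For the base case $h=0$: among the groups of height $0$ the only freely indecomposable, non-abelian ones are the fundamental groups of closed surfaces $\Sigma$ of Euler characteristic at most $-2$ (a non-trivial free product of height-$0$ pieces is freely decomposable, a finitely generated free abelian group is abelian, and a surface with non-empty boundary has free fundamental group). Because $\chi(\Sigma)\le -2$, I can choose a two-sided essential simple closed curve $c$ in $\Sigma$ whose two complementary pieces each have negative Euler characteristic; each piece then has non-abelian free fundamental group. This exhibits $\G$ as an amalgam $A\ast_{\Z}B$ over the infinite cyclic group $\langle c\rangle$ with $A,B\in\cC_0$, and the splitting is acylindrical by the remark following Proposition \ref{graph}. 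Therefore $\G\in\cC_1\subseteq\cC$.

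For the inductive step $h\ge 1$: since $\G$ is freely indecomposable, Proposition \ref{graph} presents it as the fundamental group of a finite, $2$-acylindrical graph of groups with infinite cyclic edge groups, one of whose vertex groups $V$ is a non-abelian limit group of height at most $h-1$. By the inductive hypothesis $V\in\cC_m$ for some $m$. The remaining vertex groups of this decomposition are limit groups, hence finitely presented; the edge groups are cyclic; and the decomposition is acylindrical. So all the conditions defining $\cC_{m+1}$ are satisfied, and $\G\in\cC_{m+1}\subseteq\cC$, which completes the induction.

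The one point that genuinely has to be checked --- rather than simply read off from Proposition \ref{graph} and the remark following it --- is that in the case $h\ge 1$ \emph{all} vertex groups of the graph of groups, not just the distinguished lower-height one supplied by Proposition \ref{graph}, are finitely presented. For this I would appeal to the way the decomposition is constructed (Seifert-van Kampen applied to the top block of an ambient $\o$-rft tower of minimal height, followed by Bass-Serre theory) to identify the vertex groups as limit groups, together with the fact that limit groups are finitely presented. Everything else --- the acylindricity, the cyclicity of the edge groups, and both base cases --- is handed to us directly, so I expect the argument as a whole to be short.
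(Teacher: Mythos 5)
Your argument is correct and is exactly the induction on height that the paper intends when it calls the corollary an ``immediate consequence'' of Proposition~\ref{graph}: freely decomposable groups land in $\cC_0$, height-$0$ closed surface groups split acylindrically over a curve as in the remark following Proposition~\ref{graph}, and the inductive step reads off the definition of $\cC_{m+1}$ from the proposition (with the vertex groups finitely presented because they are finitely generated subgroups of a limit group, hence limit groups). Nothing is missing.
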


\subsection{Other salient properties}\label{ss:props}

In the proof of Theorems \ref{main} and \ref{split}, the
only properties of limit groups $\G$ that will be needed are the
following.
\begin{enumerate}
\item Limit groups are finitely presented, 
and their finitely generated subgroups
are limit groups.
\item 
If $\G$ is non-abelian, it lies in $\cC$ (Corollary \ref{LinC}).
\item Cyclic subgroups are
closed in the profinite topology on $\G$.
 (This is true for
all finitely generated subgroups \cite{wilt}.)
\item If a subgroup $S$ of $\G$ has finite dimensional $H_1(S;\mathbb Q)$,
then  $S$ is finitely generated (and hence is a 
limit group) \cite[Theorem 2]{bh1}.
\item Limit groups are of type $\FP_\infty$ (in fact $\mathcal F_\infty$).
This follows directly from the fact that the class of limit groups
coincides with the class of constructible limit groups, \cite[Definition 1.14]{BF}.
\end{enumerate}

\subsection{Notation}
 Throughout this paper, we consistently use the notational convention that
$S$ is a subgroup of the
direct product of the limit groups $\G_i$ ($1\le i\le n$),  that
$L_i$ denotes the intersection $S\cap\G_i$, and that
$p_i : \G_1\times\dots\times \G_n\to\G_i$ is the coordinate
projection.

\subsection{Subgroups of finite index} \label{ss:fi}
Throughout the proof Theorems \ref{main}, \ref{split}
and \ref{main3}
we shall repeatedly pass to subgroups of finite
index $H_i\subset\G_i$. When we do so, we shall assume
that our original subgroup
$S$
is replaced by $p_i^{-1}(H)\cap S$ and 
that each $\G_j$ ($j\neq i$) is replaced
by $p_jp_i^{-1}(H_i)$. This does not affect the intersections
$L_j= S \cap \G_j$ ($j\neq i$).

Recall \cite[VIII.5.1]{ksbrown} that the property $\FP_n$ 
is inherited by finite-index subgroups and persists in finite
extensions. In the proof of Theorem \ref{main3}
we detect the failure of property
$\FP_n$ by considering the homology of subgroups of finite
index: if $H_k(S_1;\Q)$ is infinite dimensional for some $S_1<S$
of finite index, then neither $S$ nor $S_1$ is of type $\FP_k$.

Some care is required here because one cannot conclude
in the previous sentence that $S$ has an infinite-dimensional
homology group: the finite-dimensionality
of homology groups is a property that persists in finite extensions 
but is not, in general, inherited by finite-index subgroups. 
In  the context of the proof
of Theorem \ref{main3}, care has been taken to ensure
that each passage to a finite-index subgroup
respects this logic.

\section{Reductions of the main theorem}\label{reductions}

The following proposition reduces Theorem \ref{main} 
to Theorem \ref{main3}.

\begin{prop}\label{assume}
Theorem \ref{main} is true if and only if it holds under the following
additional assumptions.
\begin{enumerate}
\item $n\ge 2$.
\item Each projection $p_i:S\to\Gamma_i$ is surjective.
\item Each intersection $L_i=S\cap\Gamma_i$ is non-trivial.
\item Each $\Gamma_i$ is a non-abelian limit group.
\item Each $\Gamma_i$ splits as an HNN-extension over a cyclic subgroup $C_i$
with stable letter $t_i\in L_i$.
\end{enumerate}
\end{prop}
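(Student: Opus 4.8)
The plan is to establish the five reductions in turn, in each case showing that the general case of Theorem \ref{main} follows from the restricted case, so that we may assume all of (1)--(5) hold simultaneously. First I would dispose of the trivial degenerate situations: if $n=1$ then $S$ is a subgroup of a single limit group, hence (being finitely generated, as $\FP_1(\Q)$ implies finite generation) itself a limit group by property (1) of Section \ref{ss:props}, and there is nothing to prove; this gives (1). For (2), replace each $\G_i$ by the image $p_i(S)$, which is a finitely generated subgroup of a limit group and hence a limit group; this does not change $S$, and $S$ is now a subdirect product, so we may assume each $p_i$ is surjective.

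Next, for (3): suppose some $L_j=S\cap\G_j$ is trivial. Then $p_{j'}$ restricted to $S$ is injective, where $p_{j'}$ is projection to $\prod_{i\ne j}\G_i$; so $S$ embeds as a finitely generated subgroup of a product of $n-1$ limit groups, $S$ is again $\FP_n(\Q)$ hence $\FP_{n-1}(\Q)$, and we conclude by induction on $n$ (the base case being (1)). So we may assume every $L_i$ is non-trivial. For (4), observe that if some $\G_i$ is abelian then, since it is a limit group, it is free abelian of finite rank; I would argue that the product of the non-abelian factors together with a single combined abelian factor can be handled by peeling off the abelian part --- more precisely, $S\cap(\text{abelian factors})$ is a finitely generated abelian group, and the quotient structure lets one reduce to the case where there are no abelian factors at all, again invoking that finitely generated subgroups of limit groups are limit groups to keep everything within the class. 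This step requires a little care to keep track of the $\FP_n(\Q)$ hypothesis under the quotient, but is essentially bookkeeping.

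The substantive reduction is (5): arranging that each $\G_i$ splits as an HNN-extension over a cyclic (or trivial) subgroup $C_i$ with stable letter lying in $L_i$. Here I would use Corollary \ref{LinC} --- each non-abelian limit group lies in $\cC$, so in particular splits over a cyclic subgroup --- but the point is to get the stable letter (equivalently, a hyperbolic element for the corresponding tree action) to lie inside $L_i=S\cap\G_i$, not merely in $\G_i$. The mechanism is the profinite separability of cyclic subgroups (property (3) of Section \ref{ss:props}): given any splitting of $\G_i$ with stable letter $t$, the subgroup $L_i$ is non-trivial and (by surjectivity of $p_i$) normal in $\G_i$, so it meets every finite-index subgroup nontrivially and in fact, using that $\langle t\rangle$ is closed in the profinite topology, one can pass to a finite-index subgroup $H_i\le\G_i$ in which the relevant cyclic subgroup behaves well and a power of $t$ (or a suitable conjugate) is forced into $L_i$; then apply the finite-index substitution conventions of Section \ref{ss:fi}, which leave the intersections $L_j$ ($j\ne i$) unchanged. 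The \textbf{main obstacle} is exactly this last point: producing a splitting whose stable letter lies in the \emph{normal} subgroup $L_i$ while simultaneously controlling what happens to the other factors and to the $\FP_n(\Q)$ condition. One must check that after passing to finite-index subgroups in all the $\G_i$ and replacing $S$ accordingly, properties (1)--(4) are preserved, the new $S$ is still $\FP_n(\Q)$ (this is where we use that $\FP_n$ is inherited by finite-index subgroups and stable under finite extensions, \cite[VIII.5.1]{ksbrown}), and that proving the conclusion of Theorem \ref{main} for the new configuration yields it for the original $S$ --- which holds because $S$ was obtained from the new group by a finite extension, and ``virtually a direct product of $\le n$ limit groups'' is manifestly a commensurability invariant.
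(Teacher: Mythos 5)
Your reductions (1)--(4) follow essentially the same route as the paper: the cases $n\le 1$ are handled by the fact that a finitely generated subgroup of a limit group is a limit group, replacing $\G_i$ by $p_i(S)$ gives (2), projecting away a factor with trivial $L_i$ and inducting gives (3), and for (4) the paper likewise merges all abelian factors into one and then, after passing to a finite-index subgroup of that factor so that $L_n$ becomes a direct summand $\G_n=L_n\oplus M$, projects away $M$ (which meets $S$ trivially) to split $S\cong U\times L_n$. Your ``peel off the abelian part'' sketch is the same idea, though you should make explicit that the splitting comes from $L_n\subset T\le\G_1\times\cdots\times\G_{n-1}\times L_n$, which forces $T=U\times L_n$.

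The genuine gap is in (5), precisely where you flag the main obstacle. Your proposed mechanism --- start from an arbitrary cyclic splitting of $\G_i$ with stable letter $t$ and use profinite separability of $\langle t\rangle$ to force ``a power of $t$ (or a suitable conjugate)'' into $L_i$ after passing to a finite-index subgroup --- does not work. The subgroup $L_i$ may have infinite index in $\G_i$ (this is the interesting case), and the image of $t$ in $\G_i/L_i$ may have infinite order; then no power of $t$, nor any conjugate of such a power, lies in $L_i$, and passing to finite-index subgroups of $\G_i$ only shrinks $L_i$ further. The correct logic runs in the opposite direction: first find a hyperbolic element \emph{inside} $L_i$, then build the HNN splitting around it. Since $L_i$ is non-trivial and normal in $\G_i$ (by (2) and (3)), and $\G_i$ acts acylindrically, cocompactly and minimally on the Bass--Serre tree of the decomposition of Proposition \ref{graph}, the normal subgroup $L_i$ contains elements acting hyperbolically on that tree (\cite[Corollary 2.2]{bh2}, the same fact used in Lemma \ref{lemmacocompact}); choose such an element $t_i\in L_i$. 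One then invokes \cite[Theorem 3.1]{bh2}, which asserts that any such hyperbolic element is the stable letter of an HNN decomposition over a cyclic subgroup of some finite-index subgroup $\Delta_i\subset\G_i$ --- profinite separability of cyclic subgroups is used inside the proof of that theorem to manufacture $\Delta_i$, not to move $t$ into $L_i$. Replacing each $\G_i$ by $\Delta_i$ and $S$ by $S\cap(\Delta_1\times\cdots\times\Delta_n)$ as in Section \ref{ss:fi} then preserves (1)--(4) and the $\FP_n(\Q)$ hypothesis, as you correctly note at the end.
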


\begin{pf}
\noindent (1) The case $n=0$ of Theorem \ref{main} is trivial.  

In the case $n=1$, $S<\Gamma_1$ has type $\FP_1(\Q)$, so is finitely generated.
But a finitely generated subgroup of a limit group is again a limit group,
 and there is nothing more to prove.
(The case $n=2$ was proved in \cite{bh3} but an independent proof is
given below.)
 
\smallskip\noindent (2)  Since $S$ has type $\FP_n(\Q)$ it is finitely generated,
hence so is $p_i(S)$  and we can  replace  each $\G_i$ by $p_i(S)$.
 
\smallskip\noindent (3) If, say, $L_n$ is trivial, then the projection map
$q_n:S\to\Gamma_1\times\cdots\times\Gamma_{n-1}$ is injective, and $S$
is isomorphic to a subgroup $q_n(S)$ of $\Gamma_1\times\cdots
\times\Gamma_{n-1}$.
After iterating  this argument,  we may assume that each
$L_i$ is non-trivial.

\smallskip\noindent (4) Suppose that one or more of the $\Gamma_i$ is abelian. 
A group is an abelian limit group if and only if it is free abelian of finite
rank.  Hence a direct product of finitely many abelian limit groups is again
an abelian limit group.  This reduces us to the case where precisely
one of the $\Gamma_i$ -- say $\Gamma_n$ -- is abelian.

Now, replacing $\Gamma_n$ by a finite index subgroup if necessary, we may
assume that $L_n\subset\Gamma_n$ is a direct factor of $\Gamma_n$:
say $\Gamma_n=L_n\oplus M$.  Since $M\cap S$ is trivial, the projection
$\Gamma_1\times\cdots\times\Gamma_n\to\Gamma_1\times\cdots
\times\Gamma_{n-1}\times L_n$ 
with kernel $M$ maps $S$ isomorphically onto a subgroup $T$
of $\Gamma_1\times\cdots\times\Gamma_{n-1}\times L_n$.  Since 
$L_n\subset T$, it follows that $S\cong T=U\times L_n$ for some
subgroup $U$ of $\Gamma_1\times\cdots\times\Gamma_{n-1}$.  
But then $U$
has type $\FP_n(\Q)$, since $S$ does, and if Theorem \ref{main} holds
in the case where all the $\Gamma_i$ are non-abelian, then $U$ is virtually
a direct product of $n-1$ or fewer limit groups.  But then $S\cong U\times L_n$
is virtually a direct product of $n$ or fewer limit groups, so Theorem \ref{main}
holds in full generality.

\smallskip\noindent (5)  
The subgroup $L_i$ of $\Gamma_i$ is normal by (2) and non-trivial by (3).
Hence it contains an element
$t_i$ that acts hyperbolically on the tree of the splitting described in 
Proposition \ref{graph}
(see \cite[Section 2]{bh2}).  
Then by
\cite[Theorem 3.1]{bh2}, $t_i$ is the stable letter in some HNN decomposition
(with cyclic edge-stabilizer)  of a finite-index subgroup $\Delta_i\subset\Gamma_i$.

Replacing each $\Gamma_i$ by the corresponding subgroup $\Delta_i$, and
$S$ by $S\cap (\Delta_1\times\cdots\times\Delta_n)$, gives us the desired
conclusion. 

(The above argument extends to all groups in $\cC$ under the additional hypothesis that
the edge groups in the splittings defining $\cC$  are all closed in the profinite topology.)
 \end{pf}

\section{The elements of the proof of Theorem \ref{main3}}\label{s:elements}

We have seen that Theorem \ref{main} follows  from
Theorem \ref{main3}.
The proof of Theorem \ref{main3} extends from Section   \ref{fgnormal} to 
Section \ref{finalstep}. 
In the present section we give an overview of the contents of these sections
and indicate how they will be assembled to complete the proof.

In Section \ref{fgnormal} we prove the following extension of the basic result that
non-trivial, finitely-generated normal subgroups of non-abelian limit groups have
finite index \cite{bh1}.

\begin{theorem}\label{index}
Let $\Gamma$ be a group in $\cC$, and $1\ne N<G<\Gamma$ with $N$ normal 
in $\Gamma$ and $G$ finitely generated. 
 Then $|\Gamma:G|<\infty$. 
\end{theorem}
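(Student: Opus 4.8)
The plan is to induct on the level $n$ of the class $\cC$, proving that if $\G\in\cC_n$ and $1\ne N\triangleleft\G$, then every finitely generated subgroup $G$ with $N\le G\le\G$ has finite index. The base case $\cC_0$ concerns a free product $A\ast B$ (not $\Z_2\ast\Z_2$); here a non-trivial normal subgroup must act on the Bass--Serre tree without a global fixed point and indeed contains a hyperbolic element, so $N$ — and hence $G$ — is not conjugate into a factor. By the Kurosh subgroup theorem, a finitely generated subgroup $G$ of $A\ast B$ that is not conjugate into a factor and contains a non-trivial normal subgroup of the ambient group must have finite index: otherwise $G\backslash T$ is an infinite graph and one can produce infinitely many double cosets, contradicting that $N\le G$ acts cocompactly-enough; more cleanly, the core of $G$ in $T$ would be a proper subtree, but $N\le G$ and $N$ acts with unbounded orbits on $T$, forcing $G$ to act cocompactly on $T$, whence $|\G:G|<\infty$.

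For the inductive step, suppose $\G\in\cC_n$ is the fundamental group of a finite acylindrical graph of groups $\mathbb{G}$ with cyclic edge groups and at least one vertex group $\G_v\in\cC_{n-1}$; let $T$ be the Bass--Serre tree. First I would dispose of the case where $N$ is elliptic: if $N$ fixes a point of $T$ it is conjugate into a vertex group, and since $N\triangleleft\G$ and the action is acylindrical with cyclic (hence small) edge groups, a standard argument shows $N$ lies in the intersection of all conjugates of that vertex group, forcing either $N$ to be trivial or $N$ to be contained in an edge group that is normal in $\G$ — in the latter case $\G$ is virtually $N\times(\text{something})$ or an ascending HNN extension, and one reduces directly. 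So we may assume $N$ contains a hyperbolic element; then $G\supseteq N$ also contains a hyperbolic element, so $G$ is not conjugate into a vertex group and $G$ acts on $T$ with a non-trivial minimal invariant subtree $T_G$. Applying Bass--Serre theory to $G$ acting on $T_G$ presents $G$ as the fundamental group of a graph of groups $\mathbb{G}_G$ with vertex groups $G\cap\G_v^g$ and cyclic (or trivial) edge groups. The subgroup $N\cap\G_v^g$ is normal in $G\cap\G_v^g$, and — this is the crux — I want to arrange that for at least one vertex, $G\cap\G_v^g$ is finitely generated with $N\cap\G_v^g\ne 1$, so the inductive hypothesis applies to the pair $N\cap\G_v^g < G\cap\G_v^g < \G_v^g$ (using that a finitely generated subgroup of a $\cC_{n-1}$-group — e.g.\ a limit group — is again such, or at least that the relevant finiteness is inherited) and yields $|\G_v^g : G\cap\G_v^g|<\infty$. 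Finite index at one vertex, combined with the cyclic edge groups and the normality of $N$, then propagates: $G$ contains a finite-index subgroup of each vertex group it meets and the graph $\mathbb{G}_G$ is finite, so $|\G:G|<\infty$.

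The finite-generation input needed in that crux step is exactly the place I expect the main obstacle. A priori $G$ is finitely generated but its vertex groups $G\cap\G_v^g$ in $\mathbb{G}_G$ need not be; one wants to use that $N\le G$ to control this. The key leverage is that $N$ is normal in the whole of $\G$, so $N\cap\G_v$ is normal in $\G_v$ and is non-trivial (for a suitable $v$) precisely because $N$ contains hyperbolic elements and the action is acylindrical: a hyperbolic element's powers and conjugates, together with relators, force $N$ to have non-trivial elliptic part meeting a vertex group. Then $N\cap\G_v\triangleleft\G_v$ non-trivial, and by property (4) of limit groups listed in the excerpt (finite-dimensional $H_1(\,\cdot\,;\Q)$ implies finitely generated) together with the homological finiteness of $G$, one can run a Mayer--Vietoris / spectral-sequence argument over the graph of groups $\mathbb{G}_G$ to show that the vertex group containing a non-trivial normal subgroup of $\G_v$ must already be finitely generated — or, more robustly, replace $G$ by a finite-index subgroup to clean the action up (acylindricity bounds the edge-group behaviour) so that the graph $\mathbb{G}_G$ has the needed form. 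I would carry out the reductions in this order: (i) base case $\cC_0$; (ii) reduce to $N$ containing a hyperbolic element; (iii) build the induced splitting of $G$; (iv) locate a vertex with $N\cap\G_v^g\ne 1$ and $G\cap\G_v^g$ finitely generated; (v) apply induction there to get local finite index; (vi) propagate across the finite graph using cyclic edge groups and normality of $N$ to conclude $|\G:G|<\infty$.
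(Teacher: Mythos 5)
Your base case and your overall strategy (Bass--Serre theory plus induction on the level in $\cC$) are in the right spirit, but the inductive step has a genuine gap, and it is exactly the one you flag as ``the crux''. You induct on the statement \emph{``a non-trivial normal subgroup of $\G$ contained in a finitely generated $G$ forces $|\G:G|<\infty$''}, and to close the induction you need a vertex $v$ and $g\in\G$ with $N\cap\G_v^g\neq 1$. Nothing guarantees this: a non-trivial normal subgroup can act freely on the Bass--Serre tree (already in $F_2=\langle a\rangle\ast\langle b\rangle$ the commutator subgroup meets neither factor, and at levels $\ell\ge 1$ nothing in the hypotheses prevents $N$ from being a free normal subgroup acting freely on $T$). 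Your proposed remedy --- that a hyperbolic element's ``powers and conjugates, together with relators, force $N$ to have non-trivial elliptic part meeting a vertex group'' --- is not a correct argument, and the homological patch you sketch does not supply the missing non-triviality of $N\cap\G_v^g$ either. So the induction, as formulated, does not close.

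The paper avoids this by changing the statement that is inducted on. It first proves (Lemma 5.1) that since $N$ is normal and non-trivial, $N$ contains hyperbolic elements, its minimal invariant subtree is $\G$-invariant and hence all of $T$, so the finitely generated $G\supseteq N$ acts cocompactly on $T$; this is the \emph{only} place $N$ is used. Cocompactness gives $|G\backslash\G/\G_e|<\infty$ for a (cyclic) edge stabilizer $\G_e$, and the real induction is run on Proposition 5.2: \emph{if $G$ is finitely generated, $C$ is cyclic and $|G\backslash\G/C|<\infty$, then $|\G:G|<\infty$.} This double-coset condition descends automatically to a vertex group $\G_w\in\cC_{\ell-1}$ --- the number of edges of the finite graph $G\backslash T$ at the vertex $Gw$ bounds $|(G\cap\G_w)\backslash\G_w/\G_e|$ --- with no need for $N$ to meet $\G_w$ at all, and finite index at the vertex groups is then assembled into $|\G:G|<\infty$ by counting the $\G_w$-orbits on $G\backslash\G$. (Acylindricity is also used essentially in the case where $C$ is hyperbolic and the minimal $G$-subtree is proper, a case your outline does not confront.) If you want to salvage your write-up, replace your inductive statement by this double-coset version and quarantine the normal subgroup to the top-level cocompactness argument.
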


Using this result, together with the HNN decompositions of the
$\Gamma_i$ described in Proposition \ref{assume}, 
we deduce (Section \ref{vna}):

\begin{theorem}\label{propvna}
Let $\G_1,\dots,\G_n$ be non-abelian limit groups.
If $S\subset \G_1\times\cdots\times\G_n$ 
is a finitely generated subgroup with $H_2(S_1;\Q)$ finite dimensional
for all finite-index subgroups $S_1<S$,
and if $S$
satisfies  conditions (1) to (5) of Proposition \ref{assume},
then:

\begin{itemize}
\item the image of each projection $S\to\G_i\times\G_j$ 
is of finite index in $\G_i\times\G_j$;
\item the quotient groups
$\Gamma_i/L_i$ are virtually nilpotent
of  class at most $n-2$.
\end{itemize}
\end{theorem}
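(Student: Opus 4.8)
The plan is to deduce Theorem \ref{propvna} from Theorem \ref{index} by an inductive argument on $n$, using the homological hypothesis to feed the induction and the HNN decompositions to produce the normal subgroups to which Theorem \ref{index} applies. First I would treat the two-factor case $n=2$ separately as the base: here $S<\G_1\times\G_2$ is finitely generated, $L_i=S\cap\G_i$ is normal and non-trivial in $\G_i$, so Theorem \ref{index} (applied with $G=L_i$, which is finitely generated provided one first checks finite generation of $L_i$) forces $|\G_i:L_i|<\infty$, whence $S$ has finite index in $\G_1\times\G_2$ and $\G_i/L_i$ is finite, i.e.\ virtually nilpotent of class $0=n-2$. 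The finite generation of $L_i$ should come from the hypothesis that $H_1$ (a fortiori, controlled via $H_2$ of finite-index subgroups and the five-term exact sequence) is finite-dimensional, together with property (4) of Section \ref{ss:props}.

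For the inductive step I would fix a pair of indices, say $i=1,j=2$, and analyse the image $\bar S$ of $S$ in $\G_1\times\G_2$; the goal is to show $\bar S$ has finite index. Using the HNN splitting of $\G_1$ over $C_1$ with stable letter $t_1\in L_1$ from condition (5), together with the corresponding data for the other factors, one extracts from $S$ a normal subgroup of $\G_1$ sitting inside a finitely generated subgroup — the candidate being $p_1$ of the kernel of the projection $S\to\G_2\times\cdots\times\G_n$, or an appropriate refinement. Theorem \ref{index} then applies to give that this subgroup has finite index in $\G_1$. Iterating over all pairs yields the first bullet. For the second bullet, once the pairwise projections are known to have finite index, $\G_i/L_i$ embeds (virtually) into a product of the $\G_j/L_j$-type quotients for $j\ne i$ via the induced maps, and one runs an induction: the case $n-1$ gives nilpotency of class $\le n-3$ for the quotients arising in the $(n-1)$-factor subproducts, and a commutator-collecting argument across the $n$-th coordinate bumps the class up by one to $\le n-2$. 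The spectral-sequence / higher-commutator input flagged in the introduction enters precisely here, controlling how the homological finiteness of $S$ constrains iterated commutators of the $t_i$.

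The main obstacle I anticipate is the passage from the homological hypothesis ``$H_2(S_1;\Q)$ finite-dimensional for all finite-index $S_1<S$'' to the purely group-theoretic conclusion about nilpotency class. Knowing finite index of pairwise projections is comparatively soft once Theorem \ref{index} is in hand, but pinning down the \emph{class} $\le n-2$ requires showing that a suitable $(n-1)$-fold iterated commutator of the stable letters $t_1,\dots,t_{n-1}$ lies in the relevant intersection — and the natural way to see this is to suppose otherwise and manufacture an infinite-dimensional $H_2$ (or $H_k$ for some $k\le n$) in a finite-index subgroup, contradicting the hypothesis. Setting up that contradiction cleanly — identifying the right subgroup, the right spectral sequence (presumably a Lyndon–Hochschild–Serre sequence for an extension built from the HNN structure), and checking the differentials cannot kill the offending classes — is where the real work lies, and it is exactly the ``unexpected twist in the direction of nilpotent groups'' advertised in the introduction. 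A secondary technical nuisance is bookkeeping the finite-index reductions so that the hypotheses of Proposition \ref{assume} and the finite-dimensionality of $H_2$ are preserved at every stage, per the caveat in Section \ref{ss:fi}; I would handle this by always replacing $S$ and the $\G_i$ simultaneously as prescribed there and checking that condition (5)'s stable letters survive in the intersections.
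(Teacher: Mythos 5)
There is a genuine gap, and it sits exactly where you place the ``comparatively soft'' part of the argument. Your base case proposes to deduce finite generation of $L_i$ from the homological hypothesis via the five-term exact sequence. This fails: for the extension $L_i \to S \to S/L_i$ the five-term sequence only controls the coinvariants $H_0(S/L_i;H_1(L_i;\Q))$, not $H_1(L_i;\Q)$ itself, so finite-dimensionality of $H_2(S_1;\Q)$ for finite-index $S_1$ tells you nothing about $H_1(L_i;\Q)$. (Indeed, when $S$ has infinite index $L_i$ is typically \emph{not} finitely generated and $H_1(L_i;\Q)$ is infinite-dimensional --- exactly the phenomenon exploited in Theorem \ref{thm14}.) The real difficulty in Theorem \ref{propvna} is to manufacture \emph{some} finitely generated subgroup of $\G_j$ containing a non-trivial normal subgroup, to which Theorem \ref{index} can then be applied. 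The paper's choice is $A_{i,j}=p_j(p_i^{-1}(C_i))$, and proving it is finitely generated is not soft: one pulls the HNN splitting $\G_i=B_i\ast_{C_i}$ back to an HNN splitting of $S$ with edge group $\widehat C_i=p_i^{-1}(C_i)=K_i\rtimes\langle\widehat c_i\rangle$, observes that because the stable letter $t_i$ lies in $L_i$ it commutes with $K_i$, so the twisted-difference map in the Mayer--Vietoris sequence factors through $H_1(\langle\widehat c_i\rangle;\Q)$ and has image of dimension at most $1$; finite-dimensionality of $H_2(S;\Q)$ then forces $H_1(\widehat C_i;\Q)$, hence $H_1(A_{i,j};\Q)$, to be finite-dimensional, and property (4) of Subsection \ref{ss:props} gives finite generation. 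This mechanism is absent from your sketch.

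A second omission: even after Theorem \ref{index} gives $|\G_j:A_{i,j}|<\infty$, the commutator identity that yields nilpotency class at most $n-2$ (Lemma \ref{multicomm}) applies to the groups $N_{i,j}=p_j(\ker p_i)$, not to $A_{i,j}$, and $A_{i,j}/N_{i,j}$ is a priori infinite cyclic. Closing that gap is the hardest part of the section: one assumes $\G_j/N_{i,j}$ is infinite, invokes Proposition \ref{surprising} to produce classes $\overline x_1,\overline x_2$ generating free modules in the first homology of the relevant normal subgroups, and tensors them to exhibit an infinite-dimensional image of $H_2(L_1\times L_2;\Q)$ inside $H_2(S_1;\Q)$ for a suitable finite-index $S_1<S$ --- the contradiction. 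Your instinct that the class bound comes from an $(n-1)$-fold commutator landing in $L_j$ is correct (this is Lemma \ref{multicomm}, and no induction on $n$ is needed once all the $N_{i,j}$ are known to have finite index), but as written the proposal supplies neither of the two homological mechanisms that make the argument work.
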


We highlight the case $n=2$.

\begin{cor}\label{cor2factor} 
If $\Gamma_1$ and $\Gamma_2$ are non-abelian limit groups, 
and $S<\Gamma_1\times\Gamma_2$
is a subdirect product  intersecting each factor non-trivially, 
with $H_2(S_1;\Q)$ finite dimensional
for all finite-index subgroups $S_1<S$,
 then $S$ has finite index in $\Gamma_1\times\Gamma_2$.
\end{cor}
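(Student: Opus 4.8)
The plan is to derive Corollary \ref{cor2factor} as the special case $n=2$ of Theorem \ref{propvna}, by checking that all the hypotheses of that theorem are met and that its conclusions force finite index. First I would observe that the reductions of Proposition \ref{assume} apply: since $S$ is a subdirect product intersecting each factor non-trivially, conditions (2), (3) and (4) hold immediately (the $\Gamma_i$ are assumed non-abelian), and after passing to a suitable finite-index subgroup of each $\Gamma_i$ (which changes neither the intersections $L_i$ nor the finite-dimensionality hypothesis, since finite-index subgroups of $S$ remain finite-index subgroups of $S$) we may arrange condition (5) as well, with $n=2\ge 2$ giving (1). Crucially, the hypothesis that $H_2(S_1;\Q)$ is finite-dimensional for \emph{all} finite-index $S_1<S$ is precisely the hypothesis of Theorem \ref{propvna}, and it is preserved under the passage to finite-index subgroups of the $\Gamma_i$ because such a passage replaces $S$ by a finite-index subgroup of itself, whose finite-index subgroups form a subclass of the finite-index subgroups of the original $S$.

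Next I would apply Theorem \ref{propvna} with $n=2$. The first conclusion says the image of the projection $S\to\Gamma_1\times\Gamma_2$ is of finite index in $\Gamma_1\times\Gamma_2$ --- but this projection is the identity (there is nothing to project to), so this is vacuous here. The second conclusion is the one that bites: $\Gamma_i/L_i$ is virtually nilpotent of class at most $n-2=0$, i.e.\ virtually nilpotent of class $0$, which means $\Gamma_i/L_i$ is \emph{finite}. Since this holds for both $i=1$ and $i=2$, the subgroup $L_1\times L_2$ has finite index in $\Gamma_1\times\Gamma_2$; and $L_1\times L_2\subset S$ because $L_i=S\cap\Gamma_i<S$ and $S$ is a subgroup, so $[\Gamma_1\times\Gamma_2:S]\le[\Gamma_1\times\Gamma_2:L_1\times L_2]<\infty$. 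That gives the desired conclusion.

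I do not expect any genuine obstacle here, since the corollary is explicitly flagged in the text as ``the case $n=2$'' of the preceding theorem; the only point requiring a little care is the bookkeeping around finite-index reductions, namely making sure that (a) the hypothesis ``$H_2$ finite-dimensional for all finite-index subgroups'' survives each reduction, and (b) showing ``$S$ has finite index'' is equivalent before and after the reductions --- which it is, because if $S\cap(\Delta_1\times\cdots\times\Delta_n)$ has finite index in $\Delta_1\times\cdots\times\Delta_n$ and each $\Delta_i$ has finite index in $\Gamma_i$, then $S$ has finite index in $\Gamma_1\times\cdots\times\Gamma_n$. One subtlety worth a sentence is interpreting ``virtually nilpotent of class at most $0$'': a nilpotent group of class $0$ is trivial, so a virtually-nilpotent-of-class-$0$ group is just a finite group, and it is this reading that makes the quotients $\Gamma_i/L_i$ finite and hence closes the argument.
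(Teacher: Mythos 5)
Your proof is correct, and it is essentially the intended derivation: the corollary is just Theorem \ref{propvna} specialised to $n=2$, after the routine check that the hypotheses of Proposition \ref{assume} can be arranged by harmless finite-index reductions (your bookkeeping on points (a) and (b) is exactly right, and it is the right thing to worry about). The one place you go astray -- harmlessly -- is in dismissing the first conclusion of Theorem \ref{propvna} as vacuous. With $n=2$ and $(i,j)=(1,2)$ that conclusion reads: the image of the projection $S\to\G_1\times\G_2$, which is $S$ itself, has finite index in $\G_1\times\G_2$. That is not vacuous; it is verbatim the statement of Corollary \ref{cor2factor}, and it is the one-line derivation the paper has in mind when it says ``we highlight the case $n=2$.'' Your alternative route through the second conclusion is equally valid: virtually nilpotent of class $\le 0$ does mean finite, so $\G_i/L_i$ is finite for $i=1,2$, and since $L_1$ and $L_2$ commute and meet trivially inside $S$, the finite-index subgroup $L_1\times L_2$ of $\G_1\times\G_2$ sits inside $S$. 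This second route has the mild advantage of making explicit that $S$ in fact virtually contains the product of its factor intersections, which is slightly more information than the bare finite-index statement; but in substance the two arguments are two faces of the same theorem, and either is acceptable.
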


An important special case of Theorem \ref{main3},
considered in Section \ref{kernelZ},
arises where $S$ is the kernel of an epimorphism
$\Gamma_1\times\cdots\times\Gamma_n\to\Z$.

\begin{theorem}\label{theoremkernelZ}
Let $\Gamma_1,\dots,\Gamma_n$ be non-abelian limit groups, and
$N$ the kernel of an epimorphism
$\Gamma_1\times\cdots\times\Gamma_n\to\Z$.  
Then there is a subgroup of finite index $N_0\subset N$
such that 
at least one of the homology groups $H_k(N_0;\Q)$ ($0\le k\le n$)
has infinite $\Q$-dimension. 
\end{theorem}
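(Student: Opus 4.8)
The plan is to induct on $n$, using a Mayer--Vietoris / LHS-spectral-sequence argument to reduce the computation of $H_*(N_0;\Q)$ to homology of kernels of maps to $\Z$ living in direct products of fewer limit groups. Write $\G=\G_1\times\cdots\times\G_n$ and let $\phi:\G\to\Z$ be the given epimorphism with kernel $N$. After passing to a finite-index subgroup we may assume $\phi$ restricts to an epimorphism $\phi_i:\G_i\to\Z$ on each factor (if some $\phi_i$ is trivial then $\G_i<N$ and $N$ already contains a non-abelian limit group as a direct factor of a finite-index subgroup --- here one invokes property (1) of Section \ref{ss:props} together with the observation that $N$ splits off $\G_i$ --- and by induction on $n$ the factor $N\cap(\G_1\times\cdots\widehat{\G_i}\cdots\times\G_n)$ supplies the required infinite-dimensional homology group). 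So assume each $\phi_i$ is onto $\Z$. Let $K_i=\ker\phi_i<\G_i$; this is a normal subgroup of infinite index in a non-abelian limit group, hence (by \cite{bh1}, i.e.\ property (4)) is \emph{not} finitely generated, and in particular $H_1(K_i;\Q)$ is infinite dimensional.

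Next I would exploit the structure of $N$ as a subdirect product of the groups $\G_i/$(something): concretely, $N$ surjects onto each $K_i$ and contains each $K_i$; more useful is that $N$ contains the direct product $K_1\times\cdots\times K_n$, and $N/(K_1\times\cdots\times K_n)$ is a subgroup of $\Z^{n-1}$ (the image of $N$ under $(\phi_1,\dots,\phi_n)$ lands in the hyperplane $\{\sum x_i=0\}$ when one normalises $\phi=\phi_1+\cdots+\phi_n$, and $N$ maps \emph{onto} that hyperplane intersected with $\Z^n$, so the quotient is free abelian of rank $n-1$). Now run the Lyndon--Hochschild--Serre spectral sequence for the extension
\[
1\to K_1\times\cdots\times K_n\to N\to A\to 1,\qquad A\cong\Z^{n-1},
\]
with $\Q$ coefficients. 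By the K\"unneth theorem $H_*(K_1\times\cdots\times K_n;\Q)=H_*(K_1;\Q)\otimes\cdots\otimes H_*(K_n;\Q)$, and each tensor factor has $H_0=\Q$ and $H_1$ of infinite $\Q$-dimension. The $E^2$-page is $H_p(A;\,H_q(K_1\times\cdots\times K_n;\Q))$; I want to show some $E^\infty_{p,q}$ with $p+q\le n$ is infinite dimensional, which forces $H_{p+q}(N;\Q)$ to be infinite dimensional.

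The main obstacle is controlling the differentials: a priori the infinite-dimensional space $H_1(K_1;\Q)\otimes\cdots\otimes H_1(K_n;\Q)$ sitting in $E^2_{0,n}$ could be killed by differentials landing in it from $E^2_{r,n-r+1}$. The way I would handle this is to track the $A$-module structure carefully, paralleling the elementary-free-module argument of \cite{bhms} and \cite{bh1} (and the higher-commutator observation from \cite{BM1} flagged in Section \ref{s:elements}): one shows that, as a $\Q A$-module, $H_q(K_1\times\cdots\times K_n;\Q)$ for $q\le n$ contains a submodule that is free (or at least infinitely generated with controlled support), so that $H_p(A;-)$ applied to it is infinite dimensional in a range of degrees robust enough to survive the spectral sequence --- precisely, one argues that the Euler characteristic (alternating sum of $\Q$-dimensions) in total degree $\le n$ cannot be finite, since the bottom row $H_*(A;H_0)=H_*(\Z^{n-1};\Q)$ is finite dimensional while the contribution of the $H_1$-factors forces infinite dimensionality in total degree at most $1+(n-1)=n$. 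A clean route, and the one I would write up: pass to a further finite-index subgroup of $N$ if necessary so that $A$ acts trivially on a piece of $H_1(K_i;\Q)$ (using property (3), that cyclic subgroups are profinitely closed, to realise the relevant quotients), reducing the spectral sequence to one with trivial coefficients on an infinite-dimensional summand, where the surviving terms are then visible directly because $H_*(\Z^{n-1};V)$ for $V$ an infinite-dimensional trivial module is $V\otimes\Lambda^*\Q^{n-1}$, infinite dimensional in every degree $\le n-1$; combined with the non-trivial $H_1$ in the $K$-direction this lands an infinite-dimensional group in total degree $\le n$, completing the induction.
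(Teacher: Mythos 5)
Your reduction to the case where each $\phi_i=\phi|_{\G_i}$ is onto, and your choice of extension $1\to K_1\times\cdots\times K_n\to N\to A\to 1$ with $A\cong\Z^{n-1}$, are both sound, and this is a genuinely different (and potentially slicker, single-step) route from the paper's, which instead inducts on $n$ by peeling off one factor at a time via $1\to S_{n-1}\to N\to\G_n\to 1$. But the step where you actually extract an infinite-dimensional group has a real gap. Your ``clean route'' --- pass to a finite-index subgroup so that $A$ acts trivially on an infinite-dimensional piece of $H_1(K_i;\Q)$ and then read off $H_*(\Z^{n-1};V)=V\otimes\Lambda^*\Q^{n-1}$ --- cannot work. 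Since $\G_i$ is of type $\FP_\infty$, $H_1(K_i;\Q)$ is a \emph{finitely generated} $\Q[t^{\pm1}]$-module, so by the structure theorem its infinite $\Q$-dimension is carried entirely by a non-zero free summand $F_i\cong\Q[t^{\pm1}]^{r_i}$, the torsion part being finite-dimensional over $\Q$. No finite-index subgroup $k\Z<\Z$ acts trivially on a non-zero free module, so the piece you need to trivialise is exactly the piece that can never be made virtually trivial. The Euler-characteristic fallback is not an argument either: alternating sums of infinite dimensions are undefined, and the band of total degree $\le n$ is not an invariant preserved by the differentials. Note also that what sits at $E^2_{0,n}$ is not $H_1(K_1;\Q)\otimes\cdots\otimes H_1(K_n;\Q)$ but its $A$-\emph{coinvariants}, and coinvariants of an infinite-dimensional module over $\Z^{n-1}$ are very often finite-dimensional, so even the starting point of your differential-chasing needs justification.

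What would actually complete your argument is precisely the module-theoretic bookkeeping you gesture at but do not carry out. First, $F_1\otimes\cdots\otimes F_n$ contains a $\Q[\Z^n]$-module direct summand isomorphic to $\Q[\Z^n]$, which as a $\Q[A]$-module is free of infinite rank; hence $H_0(A;H_n(\prod K_i;\Q))$ contains the direct summand $\Q[\Z^n]_A\cong\Q[\Z^n/A]\cong\Q[\Z]$, which is infinite-dimensional, so $E^2_{0,n}$ is infinite-dimensional. Second, to see that no differential $d_r:E^r_{r,n-r+1}\to E^r_{0,n}$ can destroy this, one checks that $E^2_{p,q}$ is finite-dimensional for every $q\le n-1$: each K\"unneth summand of $H_q(\prod K_i;\Q)$ in degree $q<n$ has some tensor factor $H_0(K_j;\Q)=\Q$, so it is a finitely generated module over $\Q[\Z^{\{i\ne j\}}]\cong\Q[A]$, and $H_p(A;M)=\mathrm{Tor}_p^{\Q[A]}(\Q,M)$ is finite-dimensional for any finitely generated $\Q[A]$-module $M$ (it is a finitely generated module annihilated by the augmentation ideal). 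With these two points supplied, your spectral sequence does yield $\dim_\Q H_n(N;\Q)=\infty$ directly. For comparison, the paper reaches the same conclusion by first proving $H_j(N;\Z)$ is finitely generated for $j\le n-1$, then isolating the term $H_1(\G_n;H_{n-1}(S_{n-1};\Q))$, splitting off a free $\Q[t^{\pm1}]$-summand of the coefficient module exactly as above, and converting it via Shapiro's Lemma into $H_1(L_n;\Q)$, which is infinite-dimensional because the infinite-index normal subgroup $L_n$ of the non-abelian limit group $\G_n$ is not finitely generated. Your route, once repaired, trades that induction and Shapiro's Lemma for the single computation $\Q[\Z^n]_A\cong\Q[\Z]$; as written, however, the decisive steps are absent or rest on a false premise.
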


We complete the proof of Theorem \ref{main3} in Section \ref{finalstep}.
We have seen that each of the $\Gamma_i/L_i$ is virtually nilpotent.
Setting $\Gamma=\Gamma_1\times\cdots\times\Gamma_n$ and 
noting that $S$ contains the product $L=L_1\times\cdots\times L_n$,
we argue by induction on the difference in Hirsch lengths $d=h(\Gamma/L)-h(S/L)$
to prove that $H_k(S;\Q)$ has infinite $\Q$-dimension for some $k\le n$
if $d>0$. 
The initial step of the induction is provided by Theorem \ref{theoremkernelZ},
and the inductive step is established using the LHS spectral sequence.
Section \ref{s:last} contains a proof of Theorem \ref{split}.

\section{Subgroups containing normal subgroups}\label{fgnormal}
In this section we prove Theorem \ref{index}.
We assume that the reader is familiar with Bass-Serre theory
\cite{Serre2},
which we shall use freely. 
All our actions on trees are without inversions.

\begin{lemma}\label{lemmacocompact}
Let $\Delta$ be a group acting $k$-acylindrically, 
cocompactly and minimally on a tree $X$.
Let $H$ be a finitely generated subgroup of $\Delta$.
Suppose that $M<H$ is a non-trivial subgroup which is normal in $\Delta$.
Then the action of $H$ on $X$ is cocompact.
\end{lemma}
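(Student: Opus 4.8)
The plan is to show that the $H$-action on $X$ is \emph{already minimal}, after which cocompactness is a routine consequence of finite generation.

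\textbf{Step 1: $M$ fixes no point of $X$.} We may assume $X$ is not a single point, since otherwise the conclusion is vacuous. Suppose, for contradiction, that $M$ fixes a point; then its fixed-point set $\mathrm{Fix}(M)$ is a non-empty subtree. Because $M$ is normal in $\Delta$, for any $\delta\in\Delta$, $m\in M$ and $x\in\mathrm{Fix}(M)$ we have $m(\delta x)=\delta\big((\delta^{-1}m\delta)x\big)=\delta x$, so $\mathrm{Fix}(M)$ is $\Delta$-invariant; minimality of the $\Delta$-action then forces $\mathrm{Fix}(M)=X$, i.e.\ $M$ acts trivially on $X$. But a minimal action on a tree that is not a point has unbounded orbits (otherwise the convex hull of an orbit is a bounded invariant subtree whose centre is a fixed vertex or inverted edge, and inversions are excluded), so $X$ contains geodesic edge-paths of length $k+1$; these would all be stabilised by $M$, contradicting $k$-acylindricity and $M\ne 1$.

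\textbf{Step 2: $X$ is the unique minimal $H$-invariant subtree.} Since $M$ has no global fixed point, it has a unique minimal invariant subtree $X_M$, namely the union of the axes of its hyperbolic elements. Normality of $M$ in $\Delta$ means conjugation by $\Delta$ permutes these axes, so $X_M$ is $\Delta$-invariant; minimality of $X$ gives $X_M=X$. Hence $X$ is $M$-minimal, and since $M\le H$, every $H$-invariant subtree is in particular $M$-invariant, hence equal to $X$. Thus $H$ acts minimally on $X$.

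\textbf{Step 3: finitely generated plus minimal implies cocompact.} I would then invoke the standard Bass--Serre fact that a finitely generated group acting minimally on a tree acts cocompactly; concretely, pick generators $h_1,\dots,h_m$ of $H$ and a vertex $v_0$, and let $Y\subset X$ be the convex hull of the finite set $\{v_0\}\cup\{h_i^{\pm1}v_0: 1\le i\le m\}$, a finite subtree. Since $Y$ contains $v_0$ and each $h_i^{\pm1}v_0$, each translate $h_i^{\pm1}Y$ meets $Y$, and an easy induction on word length shows $HY=\bigcup_{h\in H}hY$ is connected, hence (being $H$-invariant) a subtree; by Step 2 it equals $X$, so $X/H$ is the image of the finite graph $Y$ and is finite.

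The crux is Step 1: this is the only place where $k$-acylindricity is used, and it is precisely what prevents $M$ — and, via Step 2, the whole tree $X$ — from being ``invisible'' to the finitely generated group $H$. Steps 2 and 3 are standard, and one could equally well replace Step 3 by a direct citation rather than reproducing the convex-hull argument.
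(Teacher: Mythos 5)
Your overall strategy --- use normality of $M$ and minimality of the $\Delta$-action to show that $M$, hence $H$, acts minimally, then get cocompactness from finite generation --- is the same as the paper's, and your Steps 1 and 3 are fine (Step 3 is exactly the ``standard Bass--Serre argument'' the paper invokes). But there is a genuine gap at the start of Step 2. From ``$M$ has no global fixed point'' you conclude that $M$ has a unique minimal invariant subtree, namely the union of the axes of its hyperbolic elements. That presupposes that $M$ contains a hyperbolic element, and this does \emph{not} follow from the absence of a global fixed point: $M$ is not assumed finitely generated (in the intended application to Theorem \ref{index} it is typically infinitely generated), and by the Serre--Tits alternative a group all of whose elements are elliptic need not fix a vertex --- it may instead fix an end of $X$. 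In that case there are no axes and no minimal $M$-invariant subtree at all. For instance, the normal subgroup $\Z[1/2]$ of $BS(1,2)=\langle a,t\mid tat^{-1}=a^2\rangle$ acting on the Bass--Serre tree consists of elliptic elements, fixes no point, and its invariant subtrees form a strictly decreasing chain with empty intersection; of course that action is not acylindrical, which is precisely the point: your justification at this step uses no acylindricity, so it cannot be correct as stated. The existence of hyperbolic elements in a non-trivial normal subgroup is exactly the input the paper imports from \cite[Corollary 2.2]{bh2} rather than deriving from minimality alone.

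The gap is fillable with your hypotheses, by an argument in the same spirit as your Step 1. Suppose every element of $M$ were elliptic. By Step 1, $M$ has no global fixed point, so by the Serre--Tits alternative $M$ fixes an end $\epsilon$ of $X$. Pick $1\ne m\in M$ and a vertex $x$ fixed by $m$; since $m\epsilon=\epsilon$ and $mx=x$, the geodesic ray from $x$ to $\epsilon$ is carried to itself with its origin fixed, hence is fixed pointwise by $m$. As $X$ is unbounded (Step 1), this ray contains edge-paths of length greater than $k$, contradicting $k$-acylindricity. Hence $M$ does contain hyperbolic elements, and with that established your Step 2 (union of axes is the unique minimal $M$-invariant subtree, which is $\Delta$-invariant by normality, hence all of $X$) and Step 3 go through as written.
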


\begin{proof}
If $X$ is a point there is nothing to prove, so we may assume that
$X$ has at least one edge.
By hypothesis, $\Delta$ has no global fixed point in its action on $X$.
By \cite[Corollary 2.2]{bh2}, the non-trivial normal subgroup $M<\Delta$ contains
elements which act hyperbolically on $X$, and the union of the axes of all such elements
is the unique minimal $M$-invariant subtree $X_0$ of $X$.
Since $M$ is normal in $\Delta$, the $M$-invariant subtree $X_0$ is also
invariant under the action of $\Delta$.  But $X$ is minimal as a $\Delta$-tree,
so $X_0=X$.

We have shown that $M$ acts minimally on $X$. Since $M<H$, it
follows that $H$ acts minimally on $X$, so the quotient graph of
groups $\mathcal G$ has no proper sub-(graph of groups) such that the inclusion induces an
isomorphism on $\pi_1$. A standard argument in Bass-Serre theory shows that
since $H$ is finitely
generated, the topological graph underlying $\mathcal G$ is compact, as claimed.
\end{proof}

\begin{prop}\label{propindex}
Let $\Gamma\in\cC$, and let $C,G$ be 
subgroups of $\Gamma$ with $C$ cyclic and $G$ finitely
generated.  If $|G\backslash\Gamma/C|<\infty$,
then $|\Gamma:G|<\infty$.
\end{prop}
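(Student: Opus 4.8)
The plan is to induct on the least $n$ with $\Gamma\in\cC_n$, exploiting the action of $\Gamma$ on the Bass--Serre tree $X$ of the graph of groups that witnesses $\Gamma\in\cC_n$: thus $\Gamma$ acts $k$-acylindrically, cocompactly and minimally on $X$, the edge-stabilizers are cyclic (hence finitely generated), and when $n\ge 1$ some vertex-stabilizer lies in $\cC_{n-1}$. First dispose of the trivial case: if $C$ is finite then $|\Gamma:G|\le|C|\cdot|G\backslash\Gamma/C|<\infty$, so assume $C=\langle c\rangle$ is infinite cyclic. I would next record two reductions. Since the $\Gamma$-sets $\Gamma/gCg^{-1}$ and $\Gamma/C$ are isomorphic (via $xgCg^{-1}\mapsto xgC$), the hypothesis $|G\backslash\Gamma/C|<\infty$ holds for every conjugate of $C$; and writing $\Gamma=\bigsqcup_i Gt_iC$ one has $|\Gamma:G|=\sum_i[C:C\cap t_i^{-1}Gt_i]$, a \emph{finite} sum, so it suffices to prove that $G\cap C'\ne 1$ for every conjugate $C'$ of $C$. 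Finally recall that, $G$ being finitely generated, it acts cocompactly on its unique minimal invariant subtree $X_G\subseteq X$, and — because the edge-stabilizers are finitely generated — the vertex-stabilizers of the induced splitting of $G$ are finitely generated as well.

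The argument then divides according to whether $G$ acts cocompactly on all of $X$. In the \emph{cocompact} case $Z:=G\backslash X$ is a finite graph of groups, so the only way $|\Gamma:G|<\infty$ can fail is that some $H:=G\cap\Gamma_w$ is of infinite index in the $\Gamma$-vertex-group $\Gamma_w$. If $\Gamma_w$ is infinite cyclic this forces $H=1$; I would rule this out by showing that, after conjugating the configuration so that the relevant $c$ lies in $\Gamma_w$ (or, in the case $c$ is hyperbolic, directly from its axis), acylindricity together with the finiteness of $Z$ produces a non-trivial element of $G$ lying in $C'$ for the appropriate conjugate $C'$. If instead $\Gamma_w\in\cC_{n-1}$, I would transport the hypothesis down the link of $w$: finiteness of $Z$ gives $|H\backslash\Gamma_w/\Gamma_\epsilon|<\infty$ for a cyclic edge-group $\Gamma_\epsilon$ at $w$, and the inductive hypothesis applied to $\Gamma_w$ then gives $|\Gamma_w:H|<\infty$, a contradiction.

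In the \emph{non-cocompact} case $X\setminus X_G$ carries a $G$-orbit of infinite ``branches'', so $Z$ contains an infinite subtree $D$ attached to the finite core $G\backslash X_G$ along a single edge. Here I would run a Stallings-type collapsing argument: since $|G\backslash\Gamma/C|<\infty$, the images in $Z$ of the $\Gamma$-translates of the axis of $c$ (or of the vertex fixed by $c$, if $c$ is elliptic) form a subgraph with only finitely many components, so its intersection with the tree $D$ is a finite disjoint union of segments; collapsing those segments inside $D$ produces a finite tree $\widehat D$, whence $D$ has only finitely many edges outside this ``$C$-locus''. As every vertex of $D$ must be incident to such an edge, $D$ is finite — contradicting the choice of $D$.

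The step I expect to be the main obstacle is the interface between the combinatorics and the induction, namely the treatment of vertices of $X$ with infinite stabilizer. The naive graph-theoretic count in the non-cocompact case (``every vertex carries an edge off the $C$-locus, of bounded valence'') genuinely fails at a vertex with large stabilizer, where the $C$-locus can have unbounded local valence; there one must instead pass to the vertex group and invoke the inductive hypothesis on the level of $\cC$. Likewise, in the cocompact case the delicate point is to arrange honestly that the double-coset hypothesis descends to the $\cC_{n-1}$ vertex group and to eliminate the degenerate ``$G$ co-cyclic along a $\Z$-vertex'' configuration — precisely the configuration that, if realizable by a finitely generated $G$, would contradict the proposition. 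Making these two reductions watertight, so that acylindricity really does bound the recursion, is where the work lies.
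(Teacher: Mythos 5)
Your overall skeleton (induction on the level in $\cC$, the Bass--Serre tree of the defining splitting, transporting the double-coset hypothesis to a vertex group, acylindricity) is the same as the paper's, and your opening reductions are correct and even pleasant: disposing of finite $C$, noting the hypothesis passes to conjugates, and the identity $|\Gamma:G|=\sum_i[C:C\cap t_i^{-1}Gt_i]$ over the finitely many double cosets, which reduces the problem to showing $G$ meets the relevant conjugates of $C$ non-trivially. But the cocompact case rests on a misreading of the class $\cC$: in the splittings defining $\cC$ only the \emph{edge} groups are cyclic, and only \emph{one} vertex group is required to lie in $\cC_{n-1}$; the remaining vertex groups are arbitrary finitely presented groups. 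So your dichotomy ``$\Gamma_w$ infinite cyclic or $\Gamma_w\in\cC_{n-1}$'' does not exhaust the possibilities, and your plan -- assume $|\Gamma:G|=\infty$, locate a vertex group meeting $G$ in infinite index, and contradict this locally -- cannot be executed: if the offending vertex group is one of the unconstrained ones, you have no structure to exploit, and no local argument exists. The paper never looks at those vertex groups at all: once $G\backslash T$ is known to be finite, at level $0$ it counts edges (edge stabilizers are trivial, so $|G\backslash\Gamma/\Gamma_e|=|\Gamma:G|$ is bounded by the number of edges of the finite quotient), and at level $\ell>0$ it applies the inductive hypothesis only to the distinguished vertex group $\Gamma_w\in\cC_{\ell-1}$ and its conjugates, then counts $G\backslash\Gamma$ via the right action of $\Gamma_w$ on it: finitely many orbits (they are double cosets, indexed by part of the finite quotient), each finite because $\gamma^{-1}G\gamma\cap\Gamma_w$ has finite index. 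You need some such global count (or an actual proof of your reduction $G\cap C'\ne 1$, which you never discharge) in place of the vertex-by-vertex contradiction.

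The non-cocompact case is the second genuine gap, and you half-acknowledge it yourself. The collapsing argument is not sound as stated: the image in $G\backslash X$ of a translate of the axis need not meet the infinite branch in finitely many segments, and the claim that every vertex of the branch carries an edge off the ``$C$-locus'' is exactly what breaks at vertices with large stabilizer; saying one should then ``pass to the vertex group and invoke the inductive hypothesis'' is not an argument -- in particular the double-coset hypothesis does not descend to a vertex group there, because the quotient graph is infinite. You also never establish cocompactness when $c$ is elliptic. The paper handles both situations quite differently: for elliptic $c$ it proves outright that $G\backslash T$ is finite (finite diameter, since $T$ is a uniform neighbourhood of $\Gamma v$ and $\Gamma v$ maps to finitely many $G$-orbits; finite rank, since $\pi_1$ of the quotient is a retract of $G$; and only finitely many valency-one vertices, by minimality of $T$), so the non-cocompact case simply does not arise; for hyperbolic $c$, if the minimal $G$-subtree $T_0$ is proper it chooses a vertex $u$ on a translate of the axis at distance greater than $K+k+2$ from $T_0$ and uses $k$-acylindricity to force a nearby edge stabilizer to be trivial (impossible when $\ell>0$) and, at level $0$, to force an element fixing a long segment, again contradicting acylindricity. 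Your sketch invokes acylindricity nowhere in this case, so the essential mechanism of the proof is missing.
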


\begin{pf}
Let $\Gamma$ be a group in $\cC$.
We argue by induction on the level $\ell=\ell(\Gamma)$ in the hierarchy 
$\mathcal C=\cup_n{\mathcal C}_n$ where $\G$ first appears. 
By definition,  $\Gamma$ has a non-trivial,
$k$-acylindrical, cocompact
action on a tree $T$, with cyclic edge stabilizers.
Without loss of generality we can suppose that this action is minimal.
 
If $\ell=0$ there is a single orbit of edges, the edge stabilizers are trivial and the vertex stabilizers are non-trivial.
If $\ell>0$ the edge-stabilizers are non-trivial and the stabilizer of some vertex $w$ is in ${\mathcal C}_{\ell-1}$.

Let $c$ be a generator for $C$.
We treat the initial and inductive stages of the argument simultaneously, 
but distinguish two cases according to the action
of $c$.

\smallskip\noindent{\bf Case 1.} Suppose that $c$ fixes a vertex $v$ of $T$.

Then, by our double-coset hypothesis, the $\Gamma$-orbit of $v$ consists of only
finitely many $G$-orbits $Gv_i$.  Since the action of $\G$ on $T$
is cocompact, there is a constant $m>0$ such that $T$ is the $m$-neighbourhood
of $\Gamma v$, and hence the quotient graph $X=G\backslash T$ is the 
$m$-neighbourhood of the finitely many vertices $Gv_i$. In other
words,  $X$ has finite diameter.  

Note also that $\pi_1X$ has finite rank, because it is a retract
of $G$ which is finitely
generated.

Finally, note that $X=G\backslash T$ has only finitely many valency 1 vertices.  
For otherwise, we can deduce a contradiction
as follows.  Since $G$ is finitely generated, if there are infinitely
many vertices of valency 1, then the induced graph-of-groups
decomposition of $G$ is degenerate, in the sense that there
is a valency 1 vertex $\bar u$ with $G_{\bar u}=G_{\bar e}$,
where $\bar e$ is the unique edge of $G\backslash T$
incident at $\bar u$.

Now $\bar u=Gu$ for some vertex $u$ in $T$,
 and $\bar e=Ge$ for an edge $e$ incident at $u$.
The group $G_{\bar u}$
is the stabilizer of $u$ in $G$, and $G_{\bar e}$ is the stabilizer
of $e$ in $G$.  The fact that $\bar u=Gu$ has valency $1$ in 
$G\backslash T$ means that $G_{\bar u}$ acts transitively on the link
{\rm{Lk}} of $u$ in $T$.  Hence $|{\rm{Lk}}|=|G_{\-u}:G_{\-e}|=1$, so
$u$ is a valency
$1$ vertex of $T$.  But this contradicts the fact that $T$ is minimal
as a $\Gamma$-tree.

We have shown that $X=G\backslash T$ has finite diameter, finite rank,
and only finitely many vertices of valency 1.  It follows that $X$
is a finite graph. 

In the case where $\G$ has level $\ell=0$, the stabilizer
$\G_e$ of any edge $e$ of $T$ is trivial. The number of edges
in $X=G\backslash T$ that are images of edges $\gamma e\in\Gamma e$
can therefore be counted as $|G\backslash\G/\G_e|=|G\backslash\G|=|\G:G|$.
Hence, in this case, $|\G:G|<\infty$, as required.

In the case where $\ell>0$, there is a vertex $w$ of $T$ whose
stabilizer $\G_w$ in $\G$ is a group in $\cC_{\ell-1}$.
Let $\Gamma_e$ denote the stabilizer of some edge
$e$ incident at $w$.  Then
$|(G\cap\Gamma_w)\backslash\Gamma_w/\Gamma_e|$ is bounded above
by the finite number of edges of
$X=G\backslash T$ incident at $Gw\in G\backslash T$
that are images of edges $\gamma e\in \Gamma e$. 
By inductive hypothesis, $G\cap\Gamma_w$ has finite index in
$\Gamma_w$.  Similarly, for each $\gamma\in\Gamma$,
$G\cap\gamma\Gamma_w\gamma^{-1}$ has finite index in
$\gamma\Gamma_w\gamma^{-1}$.  
Consider the action of $\G_w$ by right multiplication on $G\backslash\G$: the orbits
are
the double cosets $G\backslash\G/ \G_w$ and hence are finite in number
because they index a subset of the vertices of $X=G\backslash T$; moreover the
stabilizer of $G\gamma$ is $\gamma^{-1} G \gamma \cap \G_w$, which we have just
seen has finite index in $\G_w$.  Thus $G\backslash\G$ is finite.

\smallskip\noindent{\bf Case 2.}
Suppose that $c$ acts hyperbolically on $T$, with
axis $A$ say. 

Then the double coset hypothesis implies that the axes
$\gamma(A)$, for $\gamma\in\Gamma$, belong to only finitely many 
$G$-orbits.   On the other hand, the convex hull of 
$\bigcup_{\gamma\in\Gamma}\gamma(A)$ is a $\Gamma$-invariant subtree of $T$, 
and hence by minimality is the whole of $T$.

Let $T_0$ be the minimal $G$-invariant subtree of $T$.  If $T_0=T$
then $X=G\backslash T$ is finite since $G$ is finitely generated, and so
$|G\backslash\Gamma/\Gamma_e|<\infty$ for any edge-stabilizer 
$\Gamma_e$ in $\Gamma$.  
If $\ell=0$, then $\G_e$ is trivial, so $|\G:G|<\infty$.  
Otherwise, choose  
 $e$ incident at a vertex $w$ whose stabilizer $\G$
 is in $\cC_{\ell-1}$ and apply the inductive hypothesis as above
 to deduce that $|\Gamma:G|<\infty$.

It remains to consider the case $T_0\ne T$.

Now, for any subgraph $Y$ of $T$, and any $g\in G$, 
we have $$d(g(Y),T_0)=d(g(Y),g(T_0))=d(Y,T_0).$$
Since the $\Gamma$-orbit of $A$ contains only finitely many
$G$-orbits, there is a global upper bound $K$, say, on
$d(\gamma(A),T_0)$ as $\gamma$ varies over $\G$.

Since $T\neq T_0$ and $T$ is spanned by the $\Gamma$-orbit of $A$, 
there is a 
translate $\gamma(A)$ of $A$ that is not contained in $T_0$.  
Recall that the action is $k$--acylindrical.
Choose a vertex $u$ on $\gamma(A)$ with $d(u,T_0)>K+k+2$
and let $\G_u$ denote its stabiliser in $\G$. Let $p$ be the
vertex a distance $K$ from $T_0$ on the unique shortest 
path from $T_0$ to $u$. Since $d(\gamma(A),T_0)\le K$,
the geodesic $[p,u]$ is contained in $\gamma(A)$. Similarly,
$[p,u]$ is contained
in any translate of $A$ that passes through $u$.
In particular, if $\delta\in\Gamma_u$ 
then $[p,u]\subset\delta\gamma(A)$, and since $\delta$
fixes $u$ we have $\delta(p)=p$ or $\delta(p')=p$,
where $p'$ is the unique point of $\gamma(A)$ other than
$p$ with $d(u,p)=d(u,p')$. 

If $\delta$ fixes the edge of $[p,u]$
incident at $u$, then $\delta(p)=p$ hence $\delta$ fixes
$[p,u]$ pointwise, which  contradicts the $k$-acylindricality of
the action unless $\delta=1$. Thus the stabiliser of this edge
is trivial, which is a contradiction  unless $\ell=0$.

If $\ell=0$ then, replacing $u$ by an adjacent vertex if
necessary, we may assume that $|\Gamma_u|>2$.
Choose distinct non-trivial elements $\delta_1,\delta_2\in\G_u$.
It cannot be that all three of $\delta_1,\delta_2,\delta_1\delta_2^{-1}$
send $p'$ to $p$. Thus one of them fixes $p$, hence
$[p,u]$, which again contradicts the $k$-acylindricality of
the action.

\end{pf}

We are now able to complete the proof of Theorem \ref{index}.

\begin{pfof}{Theorem \ref{index}}
Suppose that $\Gamma\in\cC$, $G<\G$ is finitely generated, and $N$ is a non-trivial
normal subgroup of $\G$ that is contained in $G$. 
 Then by definition of $\cC$, $\G$ acts non-trivially,
 cocompactly and $k$-acylindrically on
a tree $T$ with cyclic edge stabilizers.  There is no loss of generality
in assuming the action is minimal, so we may apply 
Lemma \ref{lemmacocompact} to see that the 
action of $G$ is cocompact.  
The stabilizer $\G_e$ in $\G$
of an edge $e$ is cyclic, and the finite number of edges in
$G\backslash T$ is an upper bound on $|G\backslash\G/\G_e|$.
It follows from Proposition \ref{propindex} that $|\G:G|<\infty$, as claimed.
\end{pfof}

\section{Nilpotent quotients}\label{vna}

In this section we prove Theorem \ref{propvna}, which steers us
away from the study of groups acting on trees and into the realm
of nilpotent groups.

We first prove a general lemma (from \cite{BM1})
about a subdirect product $S$ of $n$ arbitrary (not necessarily limit) groups $\G_1,\dots,\G_n$.  As before, we write $L_i$ for
the normal subgroup $S\cap\G_i$ of $\G_i$.
We also introduce the following notation.  We write $K_i$ for the
kernel of the $i$-th projection map $p_i:S\to\Gamma_i$, 
and $N_{i,j}$ for the image of $K_i$ under the $j$-th projection 
$p_j:S\to\Gamma_j$.  
Thus $N_{i,j}$ is a normal subgroup of $\Gamma_j$.

We shall denote by  $[x_1,x_2,\dots,x_n]$ the  left-normed $n$-fold commutator   
$[[..[x_1,x_2],x_3],\dots],x_n]$.

\begin{lemma}\label{multicomm} 
$[N_{1,j},N_{2,j},\dots,N_{j-1,j},N_{j+1,j},\dots,N_{n,j}]\subset L_j$.
\end{lemma}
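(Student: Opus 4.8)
The plan is to prove this by induction on $j$ (or more naturally, to prove a slightly more uniform statement about the $N_{i,j}$), using the basic commutator identity that relates membership in $L_j$ to the structure of $S$ as a subdirect product. The key observation is this: if $k\in K_i$, then $p_i(k)=1$, so $k$ has trivial $i$-th coordinate, while $p_j(k)=$ some element representing an arbitrary element of $N_{i,j}$. Given elements $k_1\in K_1, k_2\in K_2,\dots$ (omitting $K_j$), one forms the $(n-1)$-fold commutator $c=[k_1,\dots,k_{j-1},k_{j+1},\dots,k_n]\in S$. The point is to show that $c$ lies in $\G_j$, i.e. that all coordinates of $c$ except the $j$-th are trivial; then since $c\in S\cap\G_j=L_j$ and the $j$-th coordinate of $c$ is exactly the corresponding commutator $[p_j(k_1),\dots]$, we are done — provided we can realize an arbitrary element of $[N_{1,j},\dots,N_{n,j}]$ (hat over $j$) this way.

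First I would check the coordinate-wise vanishing. Fix an index $m\neq j$. I claim the $m$-th coordinate of $c=[k_1,\dots,\widehat{k_j},\dots,k_n]$ is trivial. Indeed, among the commutands $k_1,\dots,k_n$ (with $k_j$ omitted) there is one with index $m$, namely $k_m\in K_m$, and $p_m(k_m)=1$. Since the $m$-th coordinate of a left-normed commutator $[x_1,\dots,x_n]$ is the commutator $[p_m(x_1),\dots,p_m(x_n)]$ of the coordinates, and since commutators are trivial as soon as the commutand sits inside the first entry or any entry equals the identity — more carefully, $[y,1]=1$ and $[1,y]=1$, and a left-normed commutator $[\dots,x_m,\dots]$ with $p_m(x_m)=1$: if $m$ is not the first index in the list, then at the stage where $k_m$ is brought in we are forming $[\,\cdot\,,p_m(k_m)]=[\,\cdot\,,1]=1$, and everything after that stays $1$; if $m$ happens to be the first index appearing in the commutator, then the innermost bracket is $[p_m(k_m),p_m(k_{?})]=[1,\cdot]=1$ and again all is trivial. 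Either way the $m$-th coordinate of $c$ vanishes, so $c\in\G_j$, hence $c\in L_j$.

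Next I would handle the surjectivity/realization issue: I need that every generator (and hence every element) of $[N_{1,j},\dots,N_{j-1,j},N_{j+1,j},\dots,N_{n,j}]$ arises as the $j$-th coordinate of such a $c$. Pick arbitrary $a_i\in N_{i,j}$ for $i\neq j$; by definition of $N_{i,j}=p_j(K_i)$, choose $k_i\in K_i$ with $p_j(k_i)=a_i$. Then the $j$-th coordinate of $c=[k_1,\dots,\widehat{k_j},\dots,k_n]$ is precisely $[a_1,\dots,\widehat{a_j},\dots,a_n]$. Since such left-normed commutators generate the subgroup $[N_{1,j},\dots,\widehat{N_{j,j}},\dots,N_{n,j}]$ (this is the standard fact that an iterated commutator subgroup of normal subgroups is generated by the basic left-normed commutators of their elements — using normality to absorb the conjugates that appear in commutator expansions), and since each such generator lies in $L_j$ by the previous paragraph, and $L_j$ is a subgroup, the whole commutator subgroup lies in $L_j$.

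The only genuinely delicate point — and the one I would write out carefully — is the claim that the iterated commutator subgroup $[N_{1,j},\dots,N_{n,j}]$ (hat over $j$) is generated by the \emph{left-normed} evaluations $[a_1,\dots,a_{n}]$ with $a_i\in N_{i,j}$, rather than by all bracketings or by commutators of products. This requires the normality of the $N_{i,j}$ in $\G_j$ together with the commutator identities $[xy,z]=[x,z]^y[y,z]$ and $[x,yz]=[x,z][x,y]^z$, applied inductively; the cleanest route is to recall the standard lemma (e.g. from P. Hall's commutator calculus) that if $A_1,\dots,A_r$ are normal subgroups of a group $G$ then $[A_1,\dots,A_r]$ is generated by $\{[a_1,\dots,a_r] : a_i\in A_i\}$. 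With that in hand the argument above is complete; everything else is bookkeeping about which coordinates of an iterated commutator vanish.
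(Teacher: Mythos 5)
Your proposal is correct and is essentially the paper's own argument: lift each $a_i\in N_{i,j}$ to an element of $K_i\subset S$, form the left-normed $(n-1)$-fold commutator, observe that every coordinate except the $j$-th vanishes because a left-normed commutator with a trivial entry is trivial, and conclude that the commutator lies in $S\cap\G_j=L_j$ while its $j$-th coordinate is the desired commutator of the $a_i$. The one point you elaborate beyond the paper --- that the normality of the $N_{i,j}$ in $\G_j$ is what lets the left-normed evaluations generate the iterated commutator subgroup --- is left implicit in the paper's ``since the choice was arbitrary'' step, and your care there is warranted but not a divergence in method.
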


\begin{pf} Suppose that $\nu_{i,j}\in N_{i,j}$ for a fixed $j$ and for all $i\ne j$.
Then
 there exist $\sigma_i\in S$ with $p_i(\sigma_i)=1$ and $p_j(\sigma_i)=\nu_{i,j}$.  
Let $\sigma$ denote the $(n-1)$-fold commutator
$[\sigma_1,\dots,\sigma_{j-1},\sigma_{j+1},\dots,\sigma_n]\in S$.  Then
$p_j(\sigma)$ is the $(n-1)$-fold commutator
$$[\nu_{1,j},\dots,\nu_{j-1,j},\nu_{j+1,j},\dots,\nu_{n,j}]\in\Gamma_j.$$

On the other hand, for $i\ne j$, we have $p_i(\sigma)=1$ since $p_i(\sigma_i)=1$.
Hence $\sigma\in L_j$, and $p_j(\sigma)=\sigma\in L_j$.

Since the choice of $\nu_{i,j}\in N_{i,j}$ was arbitrary, we have 
$$[N_{1,j},N_{2,j},\dots,N_{j-1,j},N_{j+1,j},\dots,N_{n,j}]\subset L_j$$
as claimed.
\end{pf}

We now consider a finitely generated
subdirect product $S$ of non-abelian
limit groups $\Gamma_1,\dots,\Gamma_n$
such that  $H_2(S_1;\Q)$ is finite dimensional for every
finite-index subgroup $S_1<S$.
  
Let $L_i,C_i$ and $t_i$ be as in Proposition \ref{assume}.
We consider the image $A_{i,j}:=p_j(p_i^{-1}(C_i))$ 
under the projection
$p_j$ of the preimage under $p_i$ of the cyclic group $C_i$.  
Clearly $N_{i,j}<A_{i,j}<\Gamma_j$.

In the remainder of this section we shall prove that $N_{i,j}\subset \G_j$
is of finite index for all $i$ and $j$. Lemma \ref{multicomm}
 then implies that
$\G_i/L_i$ is virtually nilpotent of class at most $n-2$,
as is claimed in Theorem \ref{propvna}. 

As a first step towards showing that $N_{i,j}\subset \G_j$
is of finite index, we prove the following lemma.

\begin{lemma}\label{comms}
Let $\G_1,\dots,\G_n$  be non-abelian limit groups.
If $S < \G_1\times\cdots\times\G_n$ 
is a finitely generated subgroup
with $H_2(S;\Q)$ finite dimensional, and if $S$
satisfies  conditions (1) to (5) of Proposition \ref{assume},
then for all $i,j$:
\begin{enumerate}
\item $|\Gamma_j:A_{i,j}|<\infty$;
\item $A_{i,j}/N_{i,j}$ is cyclic.
\end{enumerate}
\end{lemma}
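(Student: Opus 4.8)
The plan is to exploit the HNN-splitting of $\G_i$ with stable letter $t_i\in L_i$ together with the finiteness of $H_2(S;\Q)$. First I would set up the relevant homological machinery: since $\G_i = \langle B_i, t_i \mid t_i c_i t_i^{-1} = \phi_i(c_i)\rangle$ is an HNN-extension over the cyclic group $C_i=\langle c_i\rangle$, restricting to the subgroup $p_i^{-1}(C_i)<S$ and looking at the $j$-th coordinate gives a homomorphism onto $A_{i,j}$. The key point is that $A_{i,j}$ is generated by $N_{i,j}$ together with a single element — namely any element $\tau\in S$ with $p_i(\tau)=c_i$, whose $j$-th coordinate $p_j(\tau)$ is the ``extra'' generator. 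This already makes $A_{i,j}/N_{i,j}$ cyclic (part (2)), once one checks that $N_{i,j}$ is normal in $A_{i,j}$ — which follows because $N_{i,j}=p_j(K_i)$ and $K_i\lhd S$, so $N_{i,j}\lhd p_j(p_i^{-1}(C_i))=A_{i,j}$. So part (2) is essentially a formal consequence of the definitions; the real content is part (1).

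For part (1), the strategy is a Mayer–Vietoris / spectral-sequence argument to show that $|\G_j:A_{i,j}|=\infty$ would force $H_2$ of a finite-index subgroup of $S$ to be infinite-dimensional, contradicting the hypothesis. Concretely: consider the epimorphism $S\to S/K_i\cong p_i(S)=\G_i$ (using surjectivity of $p_i$, condition (2)). The element $t_i\in L_i\subset S$ maps to the stable letter, so pulling back the HNN-decomposition of $\G_i$ along $p_i$ expresses $S$ as an HNN-extension of $p_i^{-1}(B_i)$ over $p_i^{-1}(C_i)$, with stable letter (a lift of) $t_i$. Now project this picture to $\G_j$: the edge group $p_i^{-1}(C_i)$ has image $A_{i,j}$, and I want to understand how $A_{i,j}$ sits inside $\G_j$. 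The idea is that $\G_j$ is also built from pieces via its own decomposition from $\cC$, and if $A_{i,j}$ had infinite index in $\G_j$ one would be able to build, inside a suitable finite-index subgroup of $S$, an infinite amalgam or HNN-tower contributing infinitely many linearly independent classes to $H_2$. I would invoke Theorem~\ref{index} here: $N_{i,j}$ is normal in $\G_j$, so if $N_{i,j}$ (equivalently $A_{i,j}$, since $A_{i,j}/N_{i,j}$ is cyclic) were \emph{finitely generated} it would have finite index in $\G_j$ by Theorem~\ref{index} — hence the whole difficulty is concentrated in the case where $N_{i,j}$ is \emph{not} finitely generated, and there property~(4) of Section~\ref{ss:props} (infinite-dimensional $H_1$) is what feeds the infinite-dimensionality of $H_2(S)$ through the long exact / five-term exact sequence of the extension $1\to K_i\to S\to \G_i\to1$ combined with the structure of $\G_i$.

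**Main obstacle.** The hard part will be the last step: converting ``$A_{i,j}$ has infinite index in $\G_j$'' into ``some finite-index subgroup of $S$ has infinite-dimensional $H_2(\,\cdot\,;\Q)$''. One cannot argue directly with $H_2(S)$ because, as the authors warn in Section~\ref{ss:fi}, finite-dimensionality of homology is not inherited by finite-index subgroups; so I must be careful that each passage to a finite-index subgroup (e.g.\ to make $C_i$ a genuine HNN edge group, or to split off abelian pieces) is compatible with the homological bookkeeping. The cleanest route is probably: use the HNN-decomposition of $\G_j$ (from its membership in $\cC$) to find a subtree on which $A_{i,j}$ acts with infinitely many orbits of edges when $|\G_j:A_{i,j}|=\infty$; pull this back to a graph-of-groups decomposition of an index-$\le[\G_j:?]$ preimage in $S$; and read off from the associated Mayer–Vietoris sequence that $H_2$ acquires an infinite-rank summand (coming from $H_1$ of the infinitely many edge groups, via property~(4)). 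Making the acylindricity and finite-generation hypotheses interact correctly to guarantee the infinitely many orbits — rather than the decomposition collapsing — is the delicate technical point, and I expect it to require Lemma~\ref{lemmacocompact} and the minimality arguments from Section~\ref{fgnormal} in an essential way.
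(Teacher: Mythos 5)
Part (2) of your argument is correct and is essentially the paper's: writing $\widehat C_i=p_i^{-1}(C_i)$, surjectivity of $p_i$ gives $\widehat C_i=K_i\rtimes\langle\widehat c_i\rangle$ for any lift $\widehat c_i$ of a generator of $C_i$, so $A_{i,j}/N_{i,j}=p_j(\widehat C_i)/p_j(K_i)$ is a quotient of the cyclic group $\widehat C_i/K_i$. For part (1), however, there is a genuine gap. You have assembled the right objects (the HNN decomposition of $S$ pulled back from $\G_i$, Mayer--Vietoris, property (4) of the list in Section \ref{ss:props}, and Theorem \ref{index}), but you are missing the one computation that makes the proof work. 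With $\widehat B_i=p_i^{-1}(B_i)$ and stable letter $\widehat t_i=(1,\dots,t_i,\dots,1)$, the Mayer--Vietoris sequence of the induced HNN decomposition of $S$ reads
$$\cdots\to H_2(S;\Q)\to H_1(\widehat C_i;\Q)\overset{\phi}\to H_1(\widehat B_i;\Q)\to\cdots,$$
where $\phi$ is the difference of the inclusion-induced map and its $\widehat t_i$-twist. Because $t_i\in L_i$ (condition (5)), the lift $\widehat t_i$ has trivial coordinates away from the $i$-th slot and hence \emph{commutes with $K_i$}; therefore $\phi$ kills the image of $H_1(K_i;\Q)$ and has image of dimension at most $1$. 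Exactness then forces $H_1(\widehat C_i;\Q)$ to be finite dimensional once $H_2(S;\Q)$ is. Since $A_{i,j}$ is a quotient of $\widehat C_i$, its $H_1(\cdot;\Q)$ is finite dimensional, so $A_{i,j}$ is finitely generated by property (4); it contains the non-trivial normal subgroup $L_j\lhd\G_j$, and Theorem \ref{index} gives $|\G_j:A_{i,j}|<\infty$ directly --- no contradiction argument and no action on the Bass--Serre tree of $\G_j$ is needed.

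Your proposed route for the ``hard case'' would not close the gap even if executed. You plan to show that infinite index of $A_{i,j}$ forces $H_2$ of some \emph{finite-index subgroup} of $S$ to be infinite dimensional; but the hypothesis of the lemma controls only $H_2(S;\Q)$ itself, and, as the paper stresses in Section \ref{ss:fi}, finite dimensionality of homology is not inherited by finite-index subgroups, so this yields no contradiction. (The stronger hypothesis on all finite-index subgroups appears only in Theorem \ref{propvna}, where it is needed for a later, separate step.) Similarly, the five-term exact sequence of $1\to K_i\to S\to\G_i\to1$ bounds only the $\G_i$-coinvariants of $H_1(K_i;\Q)$, which says nothing about finite generation of $N_{i,j}$ or $A_{i,j}$; it is precisely the restriction to the edge group $\widehat C_i$, where the twisting element centralizes $K_i$, that upgrades a coinvariants statement to genuine finite dimensionality of $H_1$.
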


\begin{pf}
(1) It suffices to consider the case $i=1$.
The HNN decomposition $\G_1= B_1*_{C_1}$ described in
Proposition \ref{assume} (5) pulls back to an HNN
decomposition of $S$ with stable letter $\widehat t_1=(t_1,1,\dots,1)$, 
base group $\widehat {B_1}={p_1}^{-1}(B_1)$, and
 amalgamating subgroup $\widehat C_1={p_1}^{-1}(C_1)$.
As $C_1$ is cyclic, $\widehat C_1=K_1\rtimes\<\widehat c_1\>$
where $\widehat c_1$ is a choice of a lift of a generator of $C_1$.
Consider the Mayer-Vietoris sequence for the HNN decomposition 
of $S$.
$$\cdots\to H_2(S;\Q)\to H_1(\widehat C_1;\Q) \overset{\phi}\to 
H_1(\widehat{B_1};\Q) \to H_1(S;\Q)\to\cdots$$
The map $\phi$ is the difference between the map 
induced by inclusion and the map  induced by the 
inclusion twisted by the action of $\widehat t_1$ by conjugation.
Notice that $\widehat t_1$ commutes with $K_1$ 
and so acts trivially on $H_*(K_1;\Q)$. Thus $\phi$
factors through the map $H_1(\widehat C_1;\Q)\to H_1(\langle\widehat c_1\rangle;\Q)$,
in particular the image of $\phi$ has dimension at most 1. 
Since $H_2(S;\Q)$ is finite dimensional by hypothesis, 
it follows that $H_1(\widehat C_1;\Q)$ is finite dimensional.
For each $j$, $A_{1,j}=p_j(\widehat C_1)$ is a homomorphic image
of $\widehat{C_1}$, so $H_1(A_{1,j};\Q)$ is finite-dimensional.
Since $A_{1,j}$ is a subgroup of the non-abelian limit group $\G_j$,
it follows that it is finitely generated.  Since it
contains the non-trivial normal subgroup $L_j$,
Theorem \ref{index} now implies that $A_{1,j}$
has finite index in $\Gamma_j$, as claimed.

(2) As $p_j$ is surjective,
  $A_{i,j}/N_{i,j}=p_j(\widehat C_i)/p_j(K_i)$ is a homomorphic
image of $\widehat C_i/K_i$, so it is also cyclic, as claimed.
\end{pf}

The other crucial ingredient in the proof of Theorem \ref{propvna}
is the following proposition.
 
\begin{prop}\label{surprising}
 Let $G$ be an HNN extension of the form
$B\ast_C$ with stable letter $t$, finitely generated base-group
$B$ and infinite-cyclic edge group $C$. Suppose that $G$
has normal subgroups $L$ and $N$ such that $t\in L$,
$C\cap N=\{1\}$ and $G/N$ is infinite-cyclic. Suppose further
that $H_1(N;\Q)$ is infinite dimensional. Let $\Delta\subset G$ be the
unique subgroup of index 2 that contains $B$. Then, there
exists an element $x\in L\cap B\cap N$ such that 
$R\overline x\subset H_1(N\cap \Delta;\Q)$ is a free $R$-module
of rank 1, where $R=\Q[\Delta/(N\cap \Delta)]$ and $\overline x$
is the homology class determined by $x$.
\end{prop}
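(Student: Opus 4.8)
The plan is to exploit the HNN structure of $G = B\ast_C$ via the Mayer--Vietoris sequence in homology, combined with the hypotheses that $t\in L$ acts trivially (by conjugation) on the homology of the edge group and that $G/N$ is infinite cyclic. First I would observe that the subgroup $N\cap\Delta$ inherits an HNN-type splitting: since $\Delta$ is the index-2 subgroup containing $B$, and $t^2\in\Delta$, the group $\Delta$ is itself an HNN extension of $B$ (or an amalgam of two copies of $B$ over $C$, depending on how $t$ acts), with stable letter related to $t^2$. Intersecting with $N$ and using $C\cap N=\{1\}$, the Bass--Serre tree for this splitting of $\Delta$ restricted to $N\cap\Delta$ has trivial edge stabilizers, so $N\cap\Delta$ acts on a tree with trivial edge stabilizers; equivalently $N\cap\Delta$ splits as a graph of groups with trivial edge groups, i.e. as a free product (possibly with a free factor). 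This is the key structural reduction.

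Next I would track a specific homology class. Because $H_1(N;\Q)$ is infinite-dimensional while the Mayer--Vietoris sequence for the HNN decomposition of $N$ (or rather of $G$, restricted appropriately) forces the "base-type" contributions to be controlled, the infinite-dimensionality must come from the edge group $C$'s interaction — but $C\cap N=\{1\}$, so actually the infinite-dimensionality is funneled through the conjugation action of the infinite-cyclic quotient $G/N$ on a single homology class. Concretely, I expect to find an element $x\in L\cap B\cap N$ (the intersection is non-empty essentially because $L$ is large — it contains $t$ — and $L\cap B$ must meet $N$ non-trivially, as otherwise $H_1(N;\Q)$ would be finite-dimensional) whose conjugates under a generator of $\Delta/(N\cap\Delta)$ are $\Q$-linearly independent in $H_1(N\cap\Delta;\Q)$. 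Since $\Delta/(N\cap\Delta)$ is infinite cyclic (a finite-index subgroup of $G/N\cong\Z$), the $\Q$-span of these conjugates is exactly a free $R$-module of rank $1$ on $\overline x$, where $R=\Q[\Delta/(N\cap\Delta)]$.

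The main obstacle — and the step requiring the most care — is establishing the freeness, i.e. the $\Q$-linear independence of all the translates $g\cdot\overline x$ for $g$ ranging over $\Delta/(N\cap\Delta)$, rather than just showing infinitely many of them are independent. Here I would argue using the free-product structure of $N\cap\Delta$ obtained in the first step: in a free product, $H_1$ is the direct sum of the $H_1$'s of the factors (plus an abelian part from the fundamental group of the underlying graph), and the translates of $x$ land in distinct free factors (the stable letter $t^2$ permutes the vertex groups freely since the relevant stabilizers are trivial and $t^2\notin N$). One must verify that $x$ can be chosen inside a single vertex group of the splitting, that $x$ is non-trivial in $H_1$ of that vertex group (this is where $x\in N$ and the infinite-dimensionality hypothesis genuinely get used, perhaps by choosing $x$ from among generators realizing the unboundedness), and that conjugation by $t^2$ moves that vertex group to a disjoint one. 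Assembling these facts gives that $\{g\cdot\overline x\}$ lies in a direct sum of distinct summands with a non-zero component in each, hence is $R$-free of rank $1$; the identification of the coefficient ring as $\Q[\Delta/(N\cap\Delta)]$ is then immediate from the definition of the $\Delta$-action on $H_1(N\cap\Delta;\Q)$.
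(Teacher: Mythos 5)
Your opening reduction coincides with the paper's first step: since $C\cap N=\{1\}$, the group $N\cap\Delta$ acts on the Bass--Serre tree $T$ of $B\ast_C$ with trivial edge stabilizers, so it decomposes as a graph of groups with trivial edge groups whose vertex groups are conjugates of $B\cap N\cap\Delta$. But your argument for freeness breaks down at the decisive point. Because $C$ injects into $G/N\cong\Z$, the subgroup $(N\cap\Delta)C$ has finite index in $G$, so the quotient graph $X=(N\cap\Delta)\backslash T$ is \emph{finite}: there are only finitely many vertex groups, hence only finitely many summands of the form $H_1(\hbox{vertex group};\Q)$ inside $H_1(N\cap\Delta;\Q)$. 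Conjugation by $t^2$ permutes the finitely many vertices of $X$, so some power $t^{2m}$ carries each vertex group back to a conjugate of itself, and infinitely many of the translates of your class pile up inside a single summand. Your parenthetical justification (``$t^2$ permutes the vertex groups freely since the relevant stabilizers are trivial'') conflates the trivial \emph{edge} stabilizers with the non-trivial \emph{vertex} stabilizers and with the action on the finite quotient graph; the translates do not land in distinct free factors, and the claimed linear independence does not follow.

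The missing idea is module-theoretic rather than tree-theoretic. One shows that $M=H_1(B\cap N\cap\Delta;\Q)$ is a \emph{finitely generated} module over $\Q[\tau^{\pm p!}]$, where $\tau$ generates $G/N$ and $p$ is the index of $B(N\cap\Delta)$ in $G$ (finite generation holds because $B$ is finitely generated and $B/(B\cap N\cap\Delta)$ is cyclic, hence finitely presented). Since this Laurent ring is a principal ideal domain and $M$ has infinite $\Q$-dimension (the finite graph $X$ contributes only finitely many dimensions to $H_1(N\cap\Delta;\Q)$, so the infinite-dimensionality of $H_1(N;\Q)$ is forced into $M$), the classification of finitely generated modules over a PID yields a \emph{free direct summand} of $M$, generated by the class of some $z\in B\cap N\cap\Delta$; this, and not a disjointness-of-factors argument, is what guarantees the independence of all the translates. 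Finally, you offer no mechanism for producing an element of $L$: your remark that ``$L$ is large because it contains $t$'' does not do this. The correct move is to set $x=[z,t]$, which lies in $L$ automatically because $t\in L$ and $L$ is normal, lies in $N\cap\Delta$, and satisfies $\overline x=\overline z-\overline{tzt^{-1}}$ with the two terms in distinct vertex summands precisely because $t\notin\Delta$ (this is the sole reason the index-$2$ subgroup $\Delta$ appears in the statement); hence $\overline x$ still generates a free $R$-submodule. Without the PID step and the commutator construction the proposal does not close.
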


\begin{proof}  Let $T$ be the Bass-Serre tree of the splitting
$G=B\ast_C$ and consider the graph of groups decomposition
of $N_2:=N\cap \Delta$ with underlying graph $X=N_2\backslash T$; since $N_2C$
has finite index in $G$, this is a finite graph. Each vertex
group in this decomposition is a conjugate of $B\cap N_2$,
and the edge groups are trivial since $C\cap N_2=\{1\}$.

Thus, as an abelian group, $H_1(N_2;\Q)$ is the direct sum of
$H_1(X;\Q)$ and $p$ copies of $H_1(B\cap N_2;\Q)$,
where $p$ is the 
index of $BN_2$ in $G$. The first of these summands is finite-dimensional, and hence $H_1(B\cap N_2;\Q)$ is
infinite-dimensional (since $H_1(N;\Q)$ is infinite-dimensional,
implying that $H_1(N_2;\Q)$ is too). 

Let $\tau$ be a generator of $G/N$. Then $M:=H_1(B\cap N_2;\Q)$ is a
$\Q[\tau^{\pm p!}]$-module, which is finitely generated
because $B$ is finitely generated and $B/(B\cap N_2)$
is finitely presented. 
Since $\Q[\tau^{\pm p!}]$ is a principal ideal domain, the
module $M$ has a free direct summand. We fix $z\in B\cap N_2$
so that $\overline z\in M$ generates this free summand. 
It follows that $R\overline z$ has infinite $\Q$-dimension, and so
is a free submodule of 
the $R$-module $H_1(N_2;\Q)$.
 
 Since $t\notin \Delta$,
 $z_1:=z$ and $z_2:=tzt^{-1}$ belong to distinct vertex groups
 in $X$.  Hence $x:=[z,t]=z_1z_2^{-1}\in L\cap N\cap \Delta$
 is such that $\overline x=\overline z_1-\overline z_2$
 generates a free $\Q[\tau^{\pm p!}]$-submodule of $H_1(N_2;\Q)$, and hence also a free $R$-submodule.
 \end{proof}

The following proposition completes the proof of Theorem \ref{propvna}.

\begin{prop} 
Let $\G_1,\dots,\G_n$  be non-abelian limit groups.
If $S < \G_1\times\cdots\times\G_n$ 
is a finitely generated subgroup
with $H_2(S_1;\Q)$ finite dimensional for each subgroup
$S_1$ of finite index in $S$, and if $S$
satisfies  conditions (1) to (5) of Proposition \ref{assume},
then (in the notation of Lemma
\ref{comms})
$N_{i,j}\subset \G_j$
is of finite index for all $i$ and $j$.
\end{prop}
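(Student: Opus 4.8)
The plan is to argue by contradiction: suppose $N_{i,j}$ has infinite index in $\G_j$ for some pair $(i,j)$; without loss of generality take $i=1$. By Lemma \ref{comms}(1), $A_{1,j}$ has finite index in $\G_j$, and by Lemma \ref{comms}(2), $A_{1,j}/N_{1,j}$ is cyclic. Since $N_{1,j}$ has infinite index in $\G_j$ and $A_{1,j}$ has finite index, the cyclic group $A_{1,j}/N_{1,j}$ must be infinite; so after passing to the finite-index subgroup $A_{1,j}\subset\G_j$ (and correspondingly shrinking $S$ and the other $\G_k$ as in Section \ref{ss:fi}, which does not change $L_j$ nor the hypotheses on homology), we may assume $\G_j=A_{1,j}$ and that $\G_j/N_{1,j}$ is infinite cyclic.

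Next I would set up the HNN picture to apply Proposition \ref{surprising}. Pull back the HNN decomposition $\G_1=B_1\ast_{C_1}$ to the decomposition of $S$ with stable letter $\widehat t_1=(t_1,1,\dots,1)$, base group $\widehat{B_1}=p_1^{-1}(B_1)$ and edge group $\widehat C_1=p_1^{-1}(C_1)$, exactly as in the proof of Lemma \ref{comms}. Consider the normal subgroups $K_1$ (the kernel of $p_1$) and $L:=$ the preimage in $S$ of $L_1\lhd\G_1$, so $\widehat t_1\in L$; note $\widehat C_1\cap K_1$ need not be trivial, so I should instead work with the subgroup $S'=p_j^{-1}(N_{1,j})\cap S$ or, cleaner, apply Proposition \ref{surprising} to the HNN extension $G$ obtained by quotienting $S$ by $K_1\cap K_j$-type data — the honest move is to take $G=S/(S\cap\ker p_j$ restricted appropriately$)$. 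More precisely: let $q=p_j:S\to\G_j$; then $K_j=\ker q$, and $q$ carries the HNN decomposition of $S$ to one of $\G_j$ only after killing $K_j$, but $K_j$ need not meet $\widehat C_1$ trivially either. The right formulation is to apply Proposition \ref{surprising} with $G=S/K'$ where $K'$ is chosen so that the image of $\widehat C_1$ is infinite cyclic and meets the image of $N:=K_1$ trivially; since $A_{1,j}/N_{1,j}=p_j(\widehat C_1)/p_j(K_1)$ is the cyclic quotient and we have arranged $\G_j/N_{1,j}$ infinite cyclic with $\G_j=p_j(\widehat C_1\cdot\text{stuff})$, one checks the hypotheses $C\cap N=1$ and $G/N$ infinite cyclic hold on the nose for an appropriate quotient of $S$ mapping onto $\G_j$, with $L$ the image of the preimage of $L_1$.

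The key input is the homological hypothesis. Because $N_{1,j}$ has infinite index in the non-abelian limit group $\G_j$ and $\G_j/N_{1,j}$ is infinite cyclic, property (4) of Section \ref{ss:props} (finite-dimensional $H_1$ forces finite generation) combined with the fact that an infinite-index normal subgroup of a non-abelian limit group with infinite cyclic quotient cannot be finitely generated (else it would be a limit group and the Bieri--Strebel / $\mathcal{C}$-theoretic machinery, i.e. Theorem \ref{index}, would force finite index) shows $H_1(N_{1,j};\Q)$ is infinite dimensional. Hence $H_1(N;\Q)$ is infinite dimensional in the HNN extension $G$ above. Proposition \ref{surprising} then produces $x\in L\cap B\cap N$ with $R\overline{x}$ free of rank $1$ in $H_1(N\cap\Delta;\Q)$, where $R=\Q[\Delta/(N\cap\Delta)]$; since $\Delta/(N\cap\Delta)$ is infinite cyclic, $R$ is a Laurent polynomial ring and $R\overline{x}$ is infinite-dimensional over $\Q$. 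Pulling this back, one obtains an element of $L\cap K_1$ inside a finite-index subgroup $S_1<S$ whose homology class generates an infinite-dimensional $\Q$-subspace of $H_1(S_1\cap\Delta';\Q)$ stable under a suitable action; a standard Mayer--Vietoris or LHS spectral-sequence comparison (as used in the proof of Lemma \ref{comms}) then forces $H_2(S_1;\Q)$ to be infinite dimensional for that finite-index $S_1$, contradicting the hypothesis.

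The main obstacle I anticipate is the bookkeeping in the second paragraph: arranging an HNN extension $G$ mapping appropriately to (a finite-index subgroup of) $S$ so that the three hypotheses of Proposition \ref{surprising} — $t\in L$, $C\cap N=\{1\}$, and $G/N$ infinite cyclic — hold simultaneously, while keeping track of which finite-index subgroup of $S$ one has passed to and verifying that the resulting infinite-dimensional class in $H_1$ genuinely obstructs $H_2(S_1;\Q)$ being finite dimensional. The contradiction itself (infinite-dimensional $H_1$ of an edge-type group forcing infinite-dimensional $H_2$ via Mayer--Vietoris) is essentially the argument already run in Lemma \ref{comms}, so once the setup is right the conclusion should follow formally.
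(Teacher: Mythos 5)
There is a genuine gap at the final step, and it is precisely the step you defer to ``a standard Mayer--Vietoris or LHS spectral-sequence comparison''. A \emph{single} application of Proposition \ref{surprising} cannot produce the contradiction. If $\overline x$ generates a free rank-one module over $R=\Q[\tau^{\pm1}]$ inside $H_1(N\cap\Delta;\Q)$, then in the LHS spectral sequence for $(N\cap\Delta)\to\Delta\to\Z$ this summand contributes $H_0(\Z;R)\cong\Q$ and $H_1(\Z;R)=0$: the coinvariants of a free $\Q[\Z]$-module are one-dimensional, so the infinite-dimensionality is destroyed the moment you pass back up to a finite-index subgroup of $S$, and nothing infinite-dimensional reaches $H_2(S_1;\Q)$. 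The paper's proof gets around this by running the argument \emph{simultaneously in two factors}: it passes to the projection $T$ of $S$ to $\G_i\times\G_j$, chooses $T_0<T$ with $T_0/(M_1\times M_2)$ infinite cyclic generated by $\tau=(\tau_1,\tau_2)$, applies Proposition \ref{surprising} once in each factor $G_k=p_k(T_0)$ (using the HNN splitting of the limit group $\G_k$ itself, with $N=M_k$, $L=L_k$ --- not a splitting of $S$), and then forms $\overline x_1\otimes\overline x_2$, which generates a free $\Q[\tau_1^{\pm1},\tau_2^{\pm1}]$-module inside $H_2(M'_1\times M'_2;\Q)$ via the K\"unneth formula. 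The coinvariants of $\Q[\tau_1^{\pm1},\tau_2^{\pm1}]$ under the \emph{diagonal} $\<\tau\>$-action are infinite dimensional, and since $x_1,x_2$ lie in $L_1,L_2$ the resulting classes factor through $H_2(S_1;\Q)$, giving the contradiction. This two-factor tensor trick is the essential idea of the proof and is absent from your proposal.

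A secondary problem is the setup in your second paragraph, which you yourself flag as unresolved. Trying to realise the hypotheses of Proposition \ref{surprising} on a quotient of $S$ runs into exactly the difficulties you list ($\widehat C_1\cap K_1$ and $\widehat C_1\cap K_j$ need not be trivial). The clean route is the paper's: work inside the limit group factor. By Lemma \ref{comms}, $C_j\cap M_j$ is trivial automatically (since $A_{i,j}/N_{i,j}$ is cyclic and $A_{i,j}=p_j(\widehat C_i)$ contains $C_j$ only after the appropriate identification --- more precisely, $C'_j\cap M_j=\{1\}$ for the induced splitting $G_j=B'_j\ast_{C'_j}$), and $H_1(M_j;\Q)$ is infinite dimensional because $M_j\supset L_j$ is a non-trivial normal subgroup of infinite index in a non-abelian limit group, hence not finitely generated. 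Those parts of your outline are sound; it is the passage from one infinite-dimensional $H_1$ to an infinite-dimensional $H_2$ of a finite-index subgroup of $S$ that does not work as written.
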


\begin{pf} It suffices to consider the case $(i,j)=(2,1)$.
Let $T$ be the projection of $S$ to $\G_1\times\G_2$, and define
$M_i=T\cap\G_i$ for $i=1,2$.  Notice that $M_1 = N_{2,1}$, the projection to $\G_1$
of the kernel of the projection $p_2:S\to\G_2$,
and similarly $M_2=N_{1,2}$.  

Since $S$ projects onto each of $\G_1$ and $\G_2$, the same is
true of $T$.  Hence we have isomorphisms
$$\frac{\G_1}{M_1}\cong\frac{T}{M_1\times M_2}\cong\frac{\G_2}{M_2}.$$
We will assume that these groups are infinite, and obtain a contradiction.

By Lemma \ref{comms}, $T/(M_1\times M_2)$ is virtually cyclic,
so we may choose a finite index subgroup $T_0<T$ containing
$M_1\times M_2$ such that $T_0/(M_1\times M_2)$ is infinite
cyclic.  Hence $G_i:=p_i(T_0)$ is a finite-index subgroup
containing $M_i$ for $i=1,2$, such that $G_i/M_i$ is infinite
cyclic.  Choose $\tau\in T_0$ such that $\tau.(M_1\times
M_2)$ generates $T_0/(M_1\times M_2)$, and let 
$\tau_i=p_i(\tau)\in G_i$
for $i=1,2$.

The HNN-decomposition of $\G_i$ from Proposition \ref{assume} (5)
induces
 an HNN decomposition $G_i=B'_i\ast_{C'_i}$
with stable letter $t'_i\in L_i$, where $C'_i=C_i\cap G_i$
and $t'_i$ an appropriate power of the stable letter $t_i$ of
$\G_i$.
Notice that, by Lemma \ref{comms}, $C'_i\cap M_i=\{1\}$.
For each $i=1,2$, Proposition \ref{surprising} 
(with $G=G_i$, $N=M_i$, $L=L_i$, $t=t'_i$, $B=B'_i$,
$C=C'_i$) provides an index $2$ subgroup $\Delta_i$
in $G_i$ and an element $ x_i\in M_i\cap\Delta_i\cap L_i$ such
that $\overline x_i$ generates a free $\Q[\tau_i^{\pm 1}]$-submodule
of $H_1(M_i\cap\Delta_i;\Q)$.

Now define $M'_i:=M_i\cap\Delta_i$.
It follows that  $\overline x_1\otimes \overline x_2$ generates a free $\Q[\tau_1^{\pm 1},\tau_2^{\pm 1}]$-submodule of 
$$H_1(M'_1;\Q)\otimes_\Q H_1(M'_2;\Q)\subset H_2(M'_1\times M'_2;\Q).$$

Let $T_1$ be the finite-index subgroup of $T_0$ defined by
$T_1:=(M'_1\times M'_2)\rtimes\<\tau\>$, and let $S_1<S$ be the 
preimage of $T_1$ under the projection $S\to T$.
Using the LHS spectral sequence for the short exact sequence
$M'_1\times M'_2\to T_1\to\<\tau\>$, we see that 
$$H_0(\langle \tau\rangle;H_2(M'_1\times M'_2;\Q))\subset H_2(T_1;\Q)$$
has an infinite dimensional $\Q$-subspace generated by the images of
$$\{(\tau_1^mx_1\tau_1^{-m})\otimes (\tau_2^nx_2\tau_2^{-n});~m,n\in\Z\}.$$
In particular, the image of the map
$H_2(L_1\times L_2;\Q)\to H_2(T_1;\Q)$ induced by inclusion is infinite-dimensional.
But this contradicts the hypothesis that $H_2(S_1;\Q)$ is finite dimensional,
 since the inclusion $(L_1\times L_2)\to T_1$ factors
through $S_1$. This is the desired contradiction which completes the proof.
\end{pf}

\section{Normal subgroups with cyclic quotient}\label{kernelZ}

\begin{prop} If $\G_1,\dots, \G_n$ are groups of
type $\FP_n(\Z)$ and $\phi:\G_1\times\cdots\times
\G_n\to\Z$ has non-trivial restriction to each
factor, then $H_{j}(\ker\phi;\Z)$
is finitely generated for $j\le n-1$. 
\end{prop}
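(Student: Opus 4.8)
I would prove this by induction on $n$, applying the Lyndon--Hochschild--Serre spectral sequence to the extension obtained by first passing to the kernel on the last $n-1$ coordinates. Throughout, "finitely generated'' means as an abelian group, and I would freely use two elementary facts. (i) If $G$ is of type $\FP_m(\Z)$ and $M$ is a finitely generated abelian group with a $G$-action, then $H_p(G;M)$ is finitely generated for all $p\le m$: tensor a free resolution of $\Z$ over $\Z[G]$ that is finitely generated in degrees $\le m$ with $M$, and use that $\Z$ is noetherian. (ii) If $G/K\cong\Z$, the Wang exact sequence identifies the coinvariants $H_m(K;\Z)_{G/K}$ with a subgroup of $H_m(G;\Z)$.

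There is no loss of generality in assuming $\phi$ surjective, and the case $n\le 1$ is trivial (one only asserts that $H_0=\Z$ is finitely generated). For $n\ge 2$, write $\Gamma=\Gamma_1\times\Gamma^{*}$ with $\Gamma^{*}=\Gamma_2\times\cdots\times\Gamma_n$, put $\phi_1=\phi|_{\Gamma_1}$ and $\phi^{*}=\phi|_{\Gamma^{*}}$ (both non-trivial, by hypothesis), and set $N=\ker\phi$, $N^{*}=\ker\phi^{*}=N\cap\Gamma^{*}$, with $p_1\colon\Gamma\to\Gamma_1$ and $p^{*}\colon\Gamma\to\Gamma^{*}$ the coordinate projections. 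The crucial piece of bookkeeping is that, since $\phi_1(\Gamma_1)$ and $\phi^{*}(\Gamma^{*})$ are non-zero subgroups of $\Z$ and hence meet non-trivially, $p_1(N)=\phi_1^{-1}\!\big(\phi_1(\Gamma_1)\cap\phi^{*}(\Gamma^{*})\big)$ has finite index in $\Gamma_1$, and likewise $p^{*}(N)$ has finite index in $\Gamma^{*}$; consequently $p_1(N)$ and $p^{*}(N)$ are again of type $\FP_n(\Z)$, and $N^{*}\triangleleft p^{*}(N)$ with $p^{*}(N)/N^{*}$ infinite cyclic. Moreover conjugation of $N$ on its subgroup $N^{*}\triangleleft\Gamma^{*}$ is conjugation by elements of $p^{*}(N)$, so the action of $N/N^{*}=p_1(N)$ on $H_{*}(N^{*};\Z)$ factors through the infinite cyclic group $p^{*}(N)/N^{*}$.

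Now I would feed this into the spectral sequence $E^2_{p,q}=H_p\big(p_1(N);H_q(N^{*};\Z)\big)\Rightarrow H_{p+q}(N;\Z)$ of the extension $1\to N^{*}\to N\to p_1(N)\to 1$; it suffices to show that each $E^2_{p,q}$ with $p+q\le n-1$ is finitely generated. For $q\le n-2$, the inductive hypothesis applied to $\Gamma_2,\dots,\Gamma_n$ (with the surjection $\Gamma^{*}\to\phi^{*}(\Gamma^{*})\cong\Z$, which is still non-trivial on each factor) gives $H_q(N^{*};\Z)$ finitely generated; since $p_1(N)$ is of type $\FP_p$ for $p\le n-1$, fact (i) makes $E^2_{p,q}$ finitely generated. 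The only remaining term is $E^2_{0,n-1}=H_{n-1}(N^{*};\Z)_{p_1(N)}=H_{n-1}(N^{*};\Z)_{p^{*}(N)/N^{*}}$, which by fact (ii) applied to $1\to N^{*}\to p^{*}(N)\to\Z\to1$ is a subgroup of $H_{n-1}\big(p^{*}(N);\Z\big)$; the latter is finitely generated because $p^{*}(N)$ is of type $\FP_{n-1}(\Z)$. As the spectral sequence converges and $\Z$ is noetherian, $H_j(N;\Z)$ is finitely generated for all $j\le n-1$, completing the induction.

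The step I expect to be the real obstacle — and the one place demanding genuine care — is the second paragraph: one must verify that taking the kernel on the last $n-1$ coordinates yields an extension whose quotient $p_1(N)$ is a finite-index (hence still $\FP_n$) subgroup of $\Gamma_1$ and whose monodromy on $H_{*}(N^{*};\Z)$ factors through a copy of $\Z$. This is exactly what the hypothesis that $\phi$ is non-trivial on \emph{every} factor buys; without it $p_1(N)$ could be an infinite-index, poorly behaved subgroup of $\Gamma_1$ and the top row of the spectral sequence would be uncontrolled. Everything else is a routine assembly of the Wang sequence, the stability of type $\FP_m$ under finite-index subgroups and direct products, and the convergence of the LHS spectral sequence.
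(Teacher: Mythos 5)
Your proof is correct and follows essentially the same route as the paper: induction on $n$, the LHS spectral sequence for the extension obtained by splitting off one factor (with kernel the kernel of $\phi$ on the remaining $n-1$ factors), and control of the single problematic corner term $E^2_{0,n-1}$ by observing that the action on $H_{*}$ of the kernel factors through an infinite cyclic group and then invoking the two-column (Wang) spectral sequence for the extension of the $(n-1)$-fold product by $\Z$. The only difference is organisational: the paper first reduces to the case where $\phi$ is epic on every factor and handles the general case afterwards via a finite-quotient spectral sequence, whereas you absorb the non-surjectivity into the induction by noting that $p_1(N)$ and $p^{*}(N)$ are finite-index (hence still $\FP_n$) subgroups -- a legitimate streamlining.
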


\begin{proof} We first prove the result in the special
case where the restriction of
$\phi$ to each factor is epic.
Thus we may write $\G_i = L_i\rtimes\langle t_i\rangle$ where
$S=\ker\phi$, $L_i=S\cap\G_i$ is the kernel of $\phi|_{\G_i}$ and $\phi(t_i)$ is a fixed
generator of $\Z$.

If $n\ge 2$
and we fix a finite set $A_i\subset L_i$ such that $\G_i=\langle A_i,t_i\rangle$,
then $S$ is generated by $A_1\cup\dots\cup A_n\cup
\{ t_1t_2^{-1},\dots,t_1t_n^{-1}\}$. 

 We proceed by induction on $n$
(the initial case $n=1$ being trivial), considering
the LHS spectral sequence in homology for
the projection of $S$ to $\G_n$,
$$
1\to S_{n-1}\to S \overset{p_n}\to \G_n\to 1,
$$
where $S_{n-1}$ is the kernel of the restriction of
$\phi$ to $\G_1\times\cdots\times \G_{n-1}$.  In particular,
the inductive hypothesis applies to $S_{n-1}$.

Since $\G_n$ is of type $\FP_n(\Z)$ and $H_q(S_{n-1};\Z)$ is
finitely generated for $q\le n-2$, by induction, 
on the $E^2$ page of the spectral sequence
there are only finitely generated groups in the rectangle 
 $0\le p\le n$ and $0\le q\le n-2$. It follows that all of the
groups on the $E^\infty$ page that contribute to $H_j(S;\Z)$
with $j\le n-1$ are finitely generated, with the possible exception
of that in position $(0,n-1)$.

On the $E^2$ page, the group in position $(0,n-1)$ is
$H_0(\G_n;H_{n-1}(S_{n-1};\Z))$, which is the quotient of 
$H_{n-1}(S_{n-1};\Z)$ by the action of $\G_n$. 
This action
is determined by taking a section of $p_n:S \to \G_n$
and using the conjugation action of $S$. The section
we choose is that with image $L_n\rtimes\langle t_1t_n^{-1}\rangle$.
Since $L_n$ and $t_n$ commute with $S_{n-1}$, we have
$$H_0(\G_n;H_{n-1}(S_{n-1};\Z)) = H_0(\langle t_1\rangle;H_{n-1}(S_{n-1};\Z)) \ .$$
The latter group is the $(0,n-1)$ term on the $E^2$ page
of the spectral sequence for the extension 
$$1\to S_{n-1}\to \G_1\times\cdots\times\G_{n-1}\overset{\phi}\to \Z\to 1.$$
This is a 2-column spectral sequence, so the $E^2$ page
coincides with the $E^\infty$ page. 
Since $\G_1\times\cdots\times \G_{n-1}$
is of type $\FP_{n-1}$ (indeed of type $\FP_{n}$), it follows that   
$H_0(\langle t_1\rangle;H_{n-1}(S_{n-1};\Z))$ is finitely generated, and
the induction is complete.

For the general case, replace $\Z$ by the finite index subgroup
$\phi(\G_1)\cap\cdots\cap\phi(\G_n)$ ($=m\Z$, say); replace each $\G_i$
by the finite-index subgroup $\Delta_i=\G_i\cap\phi^{-1}(m\Z)$,
and replace $S$ by the finite-index subgroup $T=S\cap(\Delta_1\times\cdots\times\Delta_n)$.  Since $\phi(\Delta_i)=m\Z$ for each $i$,
the above special-case argument applies to $T$, to show that
$H_j(T;\Z)$ is finitely generated for each $0\le j\le n-1$.  Moreover,
$T$ is normal in $S$, and we may consider the LHS spectral sequence of the short exact sequence
$$1\to T\to S\to S/T\to 1.$$
On the $E^2$ page of this spectral sequence, the terms $E^2_{pq}$
in the region $0\le q\le n-1$ are homology groups of the finite
group $T/S$ with coefficients in the finitely generated modules
$H_q(T;\Z)$, and so they are finitely generated abelian groups.
But all the terms that contribute to $H_j(S;\Z)$ for $0\le j\le n-1$
lie in this region, so $H_j(S;\Z)$ is finitely generated for $j\le n-1$,
as required.
\end{proof}

\begin{theorem}\label{thm14}
Let $\G_1,\dots,\G_n$ be non-abelian limit groups and
let $S$ be the kernel of an epimorphism $\phi:\G_1\times\cdots\times
\G_n\to\Z$. 
If the restriction of $\phi$ to each of the $\G_i$
is epic, then  $H_n(S;\Q)$ has infinite $\Q$-dimension.
\end{theorem}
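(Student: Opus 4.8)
The plan is to argue by induction on $n$, the case $n=1$ being immediate from earlier results. Indeed, for $n=1$ the subgroup $S=\ker\phi$ is normal of infinite index in the non-abelian limit group $\G_1$ (and $S\ne1$, since otherwise $\G_1\cong\Z$); if $H_1(S;\Q)$ were finite-dimensional then property~(4) of Section \ref{ss:props} would force $S$ to be finitely generated, and the basic theorem of \cite{bh1} that non-trivial finitely generated normal subgroups of non-abelian limit groups have finite index would contradict $\G_1/S\cong\Z$.

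Assume now $n\ge2$. One checks first that $S$ is automatically subdirect: given $g\in\G_i$ one corrects $\phi(g)$ to $0$ using a power of a preimage of a generator of $\Z$ lying in another factor. Write $L_i=S\cap\G_i=\ker(\phi|_{\G_i})$ and fix a splitting $\G_i=L_i\rtimes\langle t_i\rangle$ (possible since $\Z$ is free) with $\phi(t_i)$ a generator of $\Z$. Set $S_{n-1}:=S\cap(\G_1\times\cdots\times\G_{n-1})=\ker(p_n|_S)$: this is the kernel of the restriction of $\phi$ to $\G_1\times\cdots\times\G_{n-1}$, again epic on each factor, so by the inductive hypothesis $H_{n-1}(S_{n-1};\Q)$ has infinite $\Q$-dimension. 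The key point is that the normal subgroups $S_{n-1}$ and $L_n$ of $S$ commute and meet trivially, so $P:=S_{n-1}\times L_n$ is normal in $S$ and $p_n$ identifies $S/P$ with $\G_n/L_n\cong\Z$. Since $\Z$ has cohomological dimension $1$, the LHS spectral sequence of $1\to P\to S\to\Z\to1$ has only two columns and therefore degenerates, giving an injection $(H_n(P;\Q))_{\Z}=H_0(\Z;H_n(P;\Q))\hookrightarrow H_n(S;\Q)$; so it suffices to show $(H_n(P;\Q))_{\Z}$ is infinite-dimensional.

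By Künneth, $H_n(P;\Q)=\bigoplus_{a+b=n}H_a(S_{n-1};\Q)\otimes_\Q H_b(L_n;\Q)$, and the conjugation action of $S/P$ respects this decomposition (it preserves $S_{n-1}$ and $L_n$ separately), so $(H_n(P;\Q))_\Z$ contains, as a direct summand, the $\Z$-coinvariants of $H_{n-1}(S_{n-1};\Q)\otimes H_1(L_n;\Q)$. A generator of $S/P$ lifts to $\widehat t=(t_1^{-1},1,\dots,1,t_n)\in S$, which acts on this summand as $\alpha^{-1}\otimes\theta_n$, where $\alpha$ and $\theta_n$ are the automorphisms induced on $H_{n-1}(S_{n-1};\Q)$ and on $H_1(L_n;\Q)$ by conjugation by $(t_1,1,\dots,1)$ and by $t_n$. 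Now $\G_1\times\cdots\times\G_{n-1}=S_{n-1}\rtimes\langle(t_1,1,\dots,1)\rangle$ and $\G_n=L_n\rtimes\langle t_n\rangle$ are of type $\FP_\infty(\Q)$, so by tensoring a finite-type partial resolution with $\Q[\Z]$ and using that $\Q[\Z]$ is Noetherian, $H_{n-1}(S_{n-1};\Q)$ is finitely generated over $\Q[\alpha^{\pm1}]$ and $H_1(L_n;\Q)$ over $\Q[\theta_n^{\pm1}]$. Each is infinite-dimensional over $\Q$ — the former by the inductive hypothesis, the latter by the case $n=1$ applied to $\G_n$ — so by the structure theorem over the PID $\Q[\Z]$ each has a free rank-one direct summand; let $\overline y,\overline x$ be generators. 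Then $\overline y\otimes\overline x$ generates a free rank-one $\Q[\alpha^{\pm1},\theta_n^{\pm1}]$-direct summand on which $\widehat t$ acts as multiplication by the unit $\alpha^{-1}\theta_n$, and its $\Z$-coinvariants are $\Q[\alpha^{\pm1},\theta_n^{\pm1}]/(\theta_n-\alpha)\cong\Q[\Z]$, of infinite $\Q$-dimension. Hence $(H_n(P;\Q))_\Z$, and therefore $H_n(S;\Q)$, is infinite-dimensional, completing the induction.

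The delicate point throughout is the passage from infinite-dimensional homology of an auxiliary group to that of $H_n(S;\Q)$ itself, since a priori the classes produced could be destroyed by spectral-sequence differentials or lost because the coinvariants functor is only right-exact. Both dangers are circumvented by working with the single $\Z$-quotient $S/P$, so that the spectral sequence has two columns and no differentials, and by keeping everything inside honest free direct summands over $\Q[\Z]$ and $\Q[\Z^2]$, on which forming coinvariants is exact. The only routine technicality is the finite generation of $H_{n-1}(S_{n-1};\Q)$ over $\Q[\Z]$, which follows from the $\FP_\infty$-property of the product $\G_1\times\cdots\times\G_{n-1}$ together with the Noetherianity of $\Q[\Z]$.
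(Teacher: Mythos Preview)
Your proof is correct and follows a route that is closely related to, but genuinely different from, the paper's.

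The paper applies the LHS spectral sequence to the extension $1\to S_{n-1}\to S\to\G_n\to 1$, whose $E^2$ page has infinitely many nonzero columns.  To control differentials the paper first invokes the preceding proposition (that $H_j(S_{n-1};\Q)$ is finite-dimensional for $j\le n-2$), then shows that the term $E^2_{1,n-1}=H_1(\G_n;H_{n-1}(S_{n-1};\Q))$ is infinite-dimensional, identifying the contribution of the free $\Q[\Z]$-summand with $H_1(L_n;\Q)$ via Shapiro's Lemma.  You instead use the extension $1\to P\to S\to\Z\to 1$ with $P=S_{n-1}\times L_n$, which has a two-column spectral sequence and hence no differentials at all; you then replace Shapiro's Lemma by the K\"unneth decomposition together with an explicit computation of coinvariants on the free $\Q[\alpha^{\pm1},\theta_n^{\pm1}]$-summand.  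The upshot is that your argument bypasses the auxiliary proposition on finite generation of lower homology entirely, at the cost of a slightly more hands-on coinvariant calculation.  Both approaches rest on the same two facts (the free summand of $H_{n-1}(S_{n-1};\Q)$ over $\Q[\Z]$ coming from $\FP_\infty$ plus the PID structure theorem, and the infinite-dimensionality of $H_1(L_n;\Q)$), so the difference is really one of packaging; your packaging is arguably the more economical.
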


\begin{proof}
The proof is by induction on $n$. 
The case $n=1$ was established in \cite{bh1}: the group $S=\ker\phi$ is a normal subgroup of the 
non-abelian limit group $\G_1$, and if $H_1(S,\Q)$ were finite dimensional
then $S$ would be finitely generated, and hence would have finite
index in $\G_1$. 

The preceding proposition shows that $H_j(S;\Z)$ is finitely generated, and hence $H_j(S;\Q)$  is
finite dimensional for $j<n$. As in the proof of that proposition, we
consider  the LHS spectral
sequence for
$$
1\to S_{n-1}\to S\overset{p_n}\to \G_n\to 1,
$$
now with $\Q$-coefficients. There are now 
only finitely generated $\Q$-modules in the region $0\le q\le n-2$. 
(Recall that $\G_i$ is of type $\FP_\infty$.)
In particular, the terms on the $E^2$ page
involved in the calculation of $H_n(S;\Q)$ are all finitely generated
except for 
$$H_0(\G_n;H_{n}(S_{n-1};\Q)) = H_0(\langle t_1\rangle;H_{n}(S_{n-1};\Q))$$
and
$$H_1(\G_n;H_{n-1}(S_{n-1};\Q)).$$

It suffices to prove that the latter is infinite dimensional
over $\Q$. (The former is actually finite dimensional, but this
is irrelevant.) 
   
The module $M=H_{n-1}(S_{n-1};\Q)$ is a homology group of 
the kernel of a map from an $\FP_\infty$ group to $\Z$.
It is thus a homology group of a chain complex of
free $R=\Q[t,t^{-1}]$ modules of finite rank. 
The ring $R$ is Noetherian, so such a homology
group is finitely generated as an $R$-module.
By the inductive hypothesis, $M$ has infinite $\Q$-dimension. 
So by the classification of finitely generated modules over a principal
ideal domain, 
$M$ has a free direct summand, that is $M=M_0\oplus R$.

The $\G_n$-action on $M$ factors through 
 the quotient $\G_n\to\G_n/L_n = \<t_n\>$, since
$L_n$ acts trivially, so the direct sum decomposition 
 passes to $M$ considered as a $\Q\G_n$ module.
Hence $H_1(\G_n;M)=H_1(\G_n;M_0) \oplus H_1(\G;R)$.

Finally, as a $\Q\G_n$ module, 
$R=\Q\G_n \otimes_{\Q L_n} \Q$, so by Shapiro's Lemma 
$H_1(\G_n;R) \cong H_1(L_n;\Q)$ 
(see for instance \cite[III.6.2. and III.5]{ksbrown}) . 
  
 As $L_n$ is an infinite index normal subgroup of a non-abelian
 limit group, it is not finitely generated, and therefore neither is
the $\Q$-module  $H_1(L_n;\Q)$ \cite{bh1}.
 \end{proof} 

Theorem \ref{theoremkernelZ} follows immediately from Theorem \ref{thm14}
in the
light of the K\"unneth formula, after one has passed to a subgroup 
of finite index to ensure that whenever $\G_i\to \Z$ is non-trivial it
is onto.

\section{Completion of the proof of the Main Theorem}\label{finalstep}

The following lemma and its corollary provide an extension to the 
virtual context of known results about finitely generated nilpotent
groups.
We shall apply them to direct products of the  
virtually nilpotent quotients of $\G_i/L_i$ resulting from 
 Theorem \ref{propvna}.

\begin{lemma}\label{nilp}
Let $G$ be a finitely generated virtually nilpotent group and 
let $\overline S$ be
a subgroup of infinite index.  Then there exists a subgroup $K$
of finite index in $G$ and an epimorphism $f:K\to\Z$ such that 
$(\overline S\cap K)\subset {\mathrm{ker}}(f)$.
\end{lemma}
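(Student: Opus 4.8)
The plan is to reduce to the genuinely nilpotent case by first passing to a finite-index nilpotent subgroup, and then to exploit the structure of finitely generated nilpotent groups via their abelianizations. First I would choose a normal subgroup $G_1 \lhd G$ of finite index that is nilpotent; replacing $\overline S$ by $\overline S \cap G_1$, we still have a subgroup of infinite index (finite index cannot disappear by intersecting with a finite-index subgroup), so it suffices to treat the case where $G$ itself is finitely generated nilpotent. Note we must produce $K$ of finite index in the \emph{original} $G$, but any $K$ of finite index in $G_1$ works, so this reduction is harmless.

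So assume $G$ is finitely generated nilpotent and $\overline S < G$ has infinite index. The key observation is that $h(G/\overline S\,G') > 0$ is impossible to guarantee directly, so instead I would argue on the nilpotency class $c$ by induction. If $c \le 1$, $G$ is finitely generated abelian; then $G/\overline S$ is an infinite finitely generated abelian group, so it surjects onto $\Z$ (kill the torsion and all but one $\Z$-summand), and composing $G \to G/\overline S \to \Z$ gives the desired $f$ on $K=G$ with $\overline S \subset \ker f$. For the inductive step, let $Z = Z(G)$ (or the last nontrivial term of the lower central series); consider the images $\bar G = G/Z$ and $\bar{\overline S}$ = image of $\overline S\,Z$ in $\bar G$. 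If $\bar{\overline S}$ has infinite index in $\bar G$, apply induction to get $\bar K < \bar G$ of finite index and $\bar f:\bar K \to \Z$ killing $\bar{\overline S}\cap\bar K$; pull back to $K = $ preimage of $\bar K$ in $G$ and $f = \bar f \circ (\text{projection})$, which kills $\overline S \cap K$ since $\overline S \cap K$ maps into $\bar{\overline S}\cap\bar K$. If instead $\bar{\overline S}$ has finite index in $\bar G$, then $\overline S\,Z$ has finite index in $G$, so (since $\overline S$ has infinite index) $\overline S \cap Z$ has infinite index in $Z$; now $Z$ is finitely generated abelian, so $Z/(\overline S\cap Z)$ is infinite and surjects onto $\Z$, giving $g: Z \to \Z$ with $\overline S \cap Z \subset \ker g$. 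The remaining task is to extend (a multiple of) $g$ to a homomorphism on a finite-index subgroup of $G$ killing $\overline S$.

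For that last extension step, I would use that $\Hom(Z,\Q) \to \Hom(Z,\Q)^{G}$ has finite-index image issues only up to passing to a power, but cleanly: the conjugation action of $G$ on $Z$ is trivial when $Z=Z(G)$ is the center, so $g: Z\to\Z$ is already $G$-invariant and extends to $\tilde g: \tilde K \to \Q$ on a finite-index subgroup $\tilde K$ (because $\Q$ is injective as an abelian group, any homomorphism from $Z$ extends to $G$ with values in $\Q$, and then clear denominators to land in $\Z$ on a suitable finite-index subgroup — or more simply, $G/[G,Z]$ is nilpotent with $Z$ central and we can use that $H^2$ obstructions vanish over $\Q$). Intersecting $\tilde K$ with the preimage of a complement-like situation, and finally intersecting with $K$ from the first case if needed, produces the finite-index $K$ and epimorphism $f:K\to\Z$ with $\overline S \cap K \subset \ker f$. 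The main obstacle I anticipate is precisely this extension/injectivity bookkeeping — ensuring $g$ (or a positive multiple) really does extend to an \emph{integer}-valued homomorphism on a finite-index subgroup while still annihilating $\overline S\cap K$; using $\Q$-coefficients and the injectivity of $\Q$ as a $\Z$-module, together with clearing denominators, is the tool that makes this routine rather than delicate.
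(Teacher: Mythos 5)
Your overall strategy---pass to a nilpotent subgroup of finite index, then induct, splitting according to whether the image of $\overline S$ modulo a central subgroup still has infinite index---is sound and runs parallel to the paper's argument, which inducts on Hirsch length rather than nilpotency class and works with a single infinite cyclic central subgroup $C$ of a torsion-free finite-index subgroup $H$ instead of the full centre $Z$. Your base case and your Case A (where the image of $\overline S$ in $G/Z$ has infinite index) are correct.

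The genuine gap is in Case B, at the step where you try to promote $g:Z\to\Z$ to a homomorphism of a finite-index subgroup of $G$. Two things go wrong. First, the principle you invoke---that any homomorphism $Z\to\Q$ extends to $G$ because $\Q$ is injective as an abelian group---is false when $G$ is non-abelian: a homomorphism to $\Q$ must factor through the abelianization, so for example in the integer Heisenberg group no non-zero homomorphism from the centre $\langle [a,b]\rangle$ to $\Q$ extends to $G$, nor to any finite-index subgroup (which still contains $[a^n,b^n]=[a,b]^{n^2}$ as a commutator). Injectivity of $\Q$ applies to extensions within an \emph{abelian} overgroup, not here. Second, and more fatally, even when an extension $\tilde g$ of $g$ does exist, nothing in that construction makes $\tilde g$ vanish on $\overline S\cap K$: you have only arranged vanishing on $\overline S\cap Z$, and the elements of $\overline S$ outside $Z$ are precisely what the lemma requires you to kill. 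The sentence about ``intersecting with a complement-like situation'' does not close this. The repair is much simpler than what you attempt: in Case B put $K:=\overline S Z$, which has finite index in $G$ by hypothesis; since $Z$ is central, $\overline S$ is normal in $K$ and $K/\overline S\cong Z/(\overline S\cap Z)$, which you have already shown to be an infinite finitely generated abelian group. Composing $K\to K/\overline S\to\Z$ gives the required epimorphism with $\overline S=\overline S\cap K$ in its kernel. This is exactly the paper's move, in the form $C\overline S\cap H=C\times(\overline S\cap H)$ followed by projection onto the factor $C$.
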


\begin{pf}
We argue by induction on the Hirsch length
$h(G)$, which is strictly positive, since $G$ is infinite. 

In the initial case, $h(G)=1$ means that $G$ has an infinite cyclic
subgroup $K$ of finite index.  
Since $\overline S$ has infinite index in $G$,
$\overline S$ is finite, so $(\overline S\cap K)$ is trivial, 
and we can take $f:K\to\Z$ to be an isomorphism.

\medskip
For the inductive step, let $H$ be a finite index torsion-free
subgroup of $G$, and $C$ an infinite cyclic central subgroup of
$H$.  
If $C\-S$ has infinite index in $G$, then the inductive hypothesis
applies to $H/C$ and we are done.  
Otherwise, $\overline S$ has infinite index in $C\-S$, 
so $C\cap \overline S$ has infinite index in $C\cong\Z$.  
But then $C\cap \overline S=\{1\}$, and since $C<H$, it follows that 
$C\overline S\cap H=C\times (\overline S\cap H)$.  
Put $K=C\overline S\cap H$ and let $f$ be the projection
$K\to C$ with kernel $\overline S\cap H$. 
\end{pf}

We note that Lemma \ref{nilp}
would not remain true if one assumed only that $G$ were polycyclic. For example, it fails for
lattices $G=\Z^2\rtimes\langle t \rangle$ 
in the 3-dimensional Lie group $\rm{Sol}$
if one takes $S=\langle t\rangle$.

\smallskip
Repeated applications of Lemma \ref{nilp} yield the following.

\begin{cor}\label{chain}
Let $G$ be a finitely generated, virtually nilpotent group and
let $\overline S$ be a subgroup of $G$.  
Then there is a subnormal chain
$\-S_0<\-S_1<\cdots <\-S_r=G$, 
where $\-S_0$ is a subgroup of finite index
in $\-S$ and for each $i$ the quotient group 
$\-S_{i+1}/\-S_i$ is either finite or cyclic.
\end{cor}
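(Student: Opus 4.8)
The plan is to deduce Corollary~\ref{chain} from Lemma~\ref{nilp} by a straightforward induction, the inductive parameter being the Hirsch length $h(G)$ together with some control on $|G:\-S|$. First I would dispose of the trivial case: if $\-S$ has finite index in $G$, take $r=1$, $\-S_0$ a subgroup of finite index in $\-S$ (for instance $\-S$ itself) and $\-S_1=G$, noting that $\-S_1/\-S_0$ is finite. So assume $\-S$ has infinite index in $G$.

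In that case Lemma~\ref{nilp} supplies a finite-index subgroup $K<G$ and an epimorphism $f:K\to\Z$ with $\-S\cap K\subset\ker(f)$. Set $\-S'=\-S\cap K$ and apply the inductive hypothesis inside the finitely generated virtually nilpotent group $\ker(f)$, which has strictly smaller Hirsch length than $G$: this yields a subnormal chain $\-S_0<\-S_1<\cdots<\-S_{r-2}=\ker(f)$ with $\-S_0$ of finite index in $\-S'$ (hence of finite index in $\-S$, since $\-S'=\-S\cap K$ has finite index in $\-S$) and each successive quotient finite or cyclic. Now extend the chain by two more terms: $\-S_{r-1}=K$, with $\-S_{r-1}/\-S_{r-2}=K/\ker(f)\cong\Z$ cyclic, and $\-S_r=G$, with $\-S_r/\-S_{r-1}=G/K$ finite. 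Concatenating gives the desired chain, and subnormality of each step is immediate since $\ker(f)\lhd K$ and each $\-S_i\lhd\-S_{i+1}$ by construction.

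I would remark that one should phrase the induction carefully so that ``finite index in $\-S$'' is genuinely transitive through the recursion: passing from $\-S$ to $\-S\cap K$ costs only a finite index, and finitely many such passages still leave a finite-index subgroup, so $\-S_0$ is of finite index in the original $\-S$ as claimed. The base case $h(G)=0$ is vacuous ($G$ finite, take $\-S_0$ of finite index in $\-S$ and $\-S_1=G$), and the argument above handles $h(G)\ge 1$ uniformly.

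The only point requiring any care — and the nearest thing to an obstacle — is verifying that Lemma~\ref{nilp} applies to $\ker(f)$, i.e.\ that $\ker(f)$ is again finitely generated and virtually nilpotent: this is clear because $\ker(f)$ is a subgroup of finite Hirsch length in a virtually polycyclic group and hence polycyclic, but it is in fact virtually nilpotent since $K$ is virtually nilpotent and the class of virtually nilpotent groups is closed under passing to subgroups; finite generation of $\ker(f)$ follows since $K$ is polycyclic (so all subgroups are finitely generated). With that observation in hand the induction goes through mechanically, so there is no serious difficulty — the corollary is essentially just the statement that repeated application of Lemma~\ref{nilp} terminates, which it does because $h$ strictly decreases at each stage.
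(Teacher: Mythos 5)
Your argument is the intended one: the paper offers no proof beyond the remark that ``repeated applications of Lemma \ref{nilp}'' give the corollary, and your induction on Hirsch length, terminating in the finite-index case, is exactly that iteration spelled out. All the substantive points are right: $\ker(f)$ is finitely generated and virtually nilpotent with $h(\ker f)=h(G)-1$, and finite index is preserved through the finitely many passages from $\overline S$ to $\overline S\cap K$.

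One small point needs patching, because a \emph{subnormal} chain with well-defined quotient groups requires each term to be normal in the next. Lemma \ref{nilp} only gives $K$ as a finite-index subgroup of $G$, so your last step $\overline S_r/\overline S_{r-1}=G/K$ is not a quotient group as written; likewise in the terminal case you cannot in general take $\overline S_0=\overline S$ when $\overline S$ is merely of finite index in $G$. Both are repaired by passing to normal cores: replace $K$ by its core $K_0$ in $G$ (then $K_0\lhd G$ with $G/K_0$ finite, $f(K_0)$ is still infinite cyclic, and $\overline S\cap K_0\subset\ker(f|_{K_0})$), and in the finite-index case take $\overline S_0$ to be the core of $\overline S$ in $G$. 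With that adjustment the induction closes and the proof is complete.
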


 For the benefit of topologists, we should note that the following algebraic argument is
modelled on the geometric proof of the Double Coset Lemma in \cite{bh2}.

\begin{pfof}{Theorem \ref{main3}}

Let $\Gamma=\Gamma_1\times\cdots\times\Gamma_n$.
Recall that 
the $\Gamma_i$ are non-abelian, the projections $p_i:S\to\Gamma_i$
are surjective, and the intersections $L_i=S\cap\Gamma_i$
are non-trivial. 
By passing to a subgroup of finite index we may assume that each $\G_i$
splits as in Proposition \ref{assume}(5).
Let $L=L_1\times\dots\times L_n$.

We only need consider the 
case when $S$ has infinite index in $\G$.
We shall 
derive a contradiction from the assumption that for
all subgroups
$S_0<S$ of finite index  and for all $0\le j\le n$,
$H_j(S_0,\Q)$ is finite-dimensional.
 
From Theorem \ref{propvna} we know
that each of the quotient groups $\Gamma_i/L_i$ is virtually
nilpotent, and hence so is $\G/L$.

Since $L\subset S$ and $S$ has infinite index in $\G$,
 the image $\-S$ of $S$ in $\G/L$ is of infinite index
and we may apply
Lemma \ref{nilp} with $\G/L$ in the role of $G$.
Let $\Lambda<\Gamma$ be the preimage of the subgroup $K$
provided by the lemma. 
Note that $\Lambda$ has finite index in $\G$,
contains $L$, and admits an epimorphism $f:\Lambda\to\Z$
such that $S\cap \Lambda\subset\mathrm{ker}(f)$.
As in (\ref{ss:fi}), we may replace the groups
$\Gamma_i$ and $S$ by  finite-index 
subgroups so as to ensure that $L\subset S\subset N$, 
where $N$  is the
kernel of an epimorphism $\Gamma\to\Z$.
By Theorem  \ref{theoremkernelZ}, there is a finite index subgroup
$N_0<N$ and an integer $j\le n$ such that  $H_j(N_0;\Q)$ is infinite
dimensional.  

By Corollary \ref{chain} (applied to the image of $S\cap N_0$ in
$\G/L$) there is a 
subgroup $S_0$ contained in $S\cap N_0$, 
which has finite index in $S$, 
and a subnormal chain of subgroups
$S_0\triangleleft S_1\triangleleft\cdots\triangleleft S_k=N_0$
with $S_{i+1}/S_i$ either finite or cyclic for each $i$.  
We now use the following lemma to contradict  the
assumption that $H_j(S_0;\Q)$ is finite-dimensional. 

\begin{lemma}\label{chain2}
Let $S_0\triangleleft S_1$ be groups with
 $S_1/S_0$ 
finite or cyclic.
If $H_j(S_0;\Q)$ is finite dimensional for $0\le j\le n$,
then $H_j(S_1;\Q)$ is finite dimensional for  $0\le j\le n$.
\end{lemma}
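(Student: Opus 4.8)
The plan is to treat the two cases---$S_1/S_0$ finite and $S_1/S_0$ infinite cyclic---separately, using in each case the LHS (Lyndon--Hochschild--Serre) spectral sequence of the short exact sequence $1\to S_0\to S_1\to S_1/S_0\to 1$ with $\Q$-coefficients.

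First I would dispose of the case where $Q:=S_1/S_0$ is \emph{finite}. Here the spectral sequence has $E^2_{pq}=H_p(Q;H_q(S_0;\Q))$. Since $\Q$ has characteristic $0$ and $Q$ is finite, $H_p(Q;A)=0$ for $p>0$ and any $\Q Q$-module $A$; thus the spectral sequence collapses to the single column $p=0$, giving $H_j(S_1;\Q)=H_0(Q;H_j(S_0;\Q))$, a quotient of $H_j(S_0;\Q)$. Finite-dimensionality of $H_j(S_0;\Q)$ for $0\le j\le n$ therefore passes immediately to $H_j(S_1;\Q)$. (One could even invoke the standard fact that $H_*(-;\Q)$ is finite-dimensional-preserving under finite \emph{extensions}, cited in Section~\ref{ss:fi}, but spelling out the collapse is cleaner.)

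Next, the case $Q\cong\Z$. Here $\Z$ has cohomological (and homological) dimension $1$, so the spectral sequence is concentrated in the two columns $p=0,1$, hence collapses at $E^2$; there is a short exact sequence
$$0\to H_0\bigl(\Z;H_j(S_0;\Q)\bigr)\to H_j(S_1;\Q)\to H_1\bigl(\Z;H_{j-1}(S_0;\Q)\bigr)\to 0.$$
Both outer terms are subquotients of $H_j(S_0;\Q)$ and $H_{j-1}(S_0;\Q)$ respectively: for a $\Q[\Z]$-module $A$, $H_0(\Z;A)=A_\Z$ and $H_1(\Z;A)=A^\Z$ are respectively a quotient and a submodule of $A$ (as $\Q$-vector spaces they are the coinvariants and invariants under the generating automorphism). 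So each is finite-dimensional when $A$ is, and by the short exact sequence $H_j(S_1;\Q)$ is finite-dimensional for $1\le j\le n$; the case $j=0$ is trivial since $H_0(S_1;\Q)=\Q$.

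I do not expect a genuine obstacle here---both cases are routine spectral-sequence bookkeeping---but the one point that needs a little care is making sure the range of indices lines up: for $H_j(S_1;\Q)$ with $j\le n$ the cyclic case uses $H_{j-1}(S_0;\Q)$, which is covered by the hypothesis (for $j\ge 1$ we have $j-1\le n-1\le n$), and for $j=0$ nothing is needed. The mildly subtle conceptual issue worth flagging in the writeup is that finite-dimensionality of rational homology is \emph{not} generally inherited by finite-index subgroups (only by finite extensions and cyclic extensions of this controlled type), which is precisely why Corollary~\ref{chain} was engineered to produce a chain with finite-or-cyclic successive quotients and why this lemma is phrased in the ``upward'' direction $S_0\triangleleft S_1$.
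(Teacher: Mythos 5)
Your proof is correct and follows essentially the same route as the paper: both use the LHS spectral sequence of $S_0\to S_1\to S_1/S_0$ and the fact that the quotient has homological dimension at most $1$ over $\Q$, so the sequence degenerates at $E^2$ into (at most) two columns whose entries are subquotients of the $H_q(S_0;\Q)$. The paper merely treats the finite and infinite cyclic cases uniformly rather than splitting them, so no substantive difference.
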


\begin{proof} 
In the LHS spectral sequence for the
group extension $S_0\to S_1\to (S_1/S_0)$ we have
$E^2_{p,q}=H_p(S_1/S_0;H_q(S_0;\Q))$.
By hypothesis,
$E^2_{p,q}$ has finite $\Q$-dimension for $q\le n$.  
Moreover, $E^2_{p,q}=0$ for $p>1$, since $S_1/S_0$ 
has homological dimension at most $1$ over $\Q$.
Thus the derivatives on the $E^2$ page all vanish and the spectral sequence stabilizes at the $E^2$ page.  
Hence, for $0\le j\le n$, we have
$$\mathrm{dim}_\Q(H_j(S_1;\Q))=\mathrm{dim}_\Q(E^2_{0,j})+\mathrm{dim}_\Q(E^2_{1,j-1})<\infty,$$
as required.
\end{proof}

Repeatedly applying this lemma to the subnormal sequence 
$S_0\triangleleft S_1\triangleleft\cdots\triangleleft S_k=N_0$
implies that $H_j(N_0;\Q)$ is finite dimensional for all $j\le n$, contradicting Theorem \ref{theoremkernelZ}.

\end{pfof}

This completes the proof of Theorem \ref{main3}, 
from which Theorem \ref{main} follows immediately.

 \section{From Theorem \ref{main3} to Theorem \ref{split}} \label{s:last}

Let $\G_i,L_i$ and $S$ be as in the statement of Theorem \ref{split},
but without necessarily assuming that the $L_i$ are non-abelian for all $i$.  We first discuss how this situation differs from the special case
stated in Theorem \ref{split}.

If some $L_i$ is trivial, then $S$ is isomorphic to a subgroup
of the direct product of the $\G_j$ with $j\ne i$, as in Proposition
\ref{assume} (3).  We now assume that $L_i\ne \{1\}$ for each $i$.

As in Proposition \ref{assume} (2), we may replace each $\G_i$ by
$p_i(S)$, where $p_i:S\to\G_i$ is the projection, and
hence assume that $p_i$ is surjective, and so each $L_i$ is normal in
$\G_i$.

If some $L_i$ is non-trivial and abelian, then it is free abelian of finite
rank, by \cite[Corollary 1.23]{BF}.  Since $L_i$ is normal, it has finite
index in $\G_i$, and it follows immediately from the $\omega$-residually
free property that $\G_i$ is itself abelian.

Arguing as in Proposition \ref{assume} (4), we may assume that only one of
the $\G_i$ is abelian, say $\G_1$,  and that $L_1$ is the only
non-trivial abelian $L_i$.   We may also assume that $L_1$ is a direct 
factor of $\G_1$; say $\G_1=L_1\times M_1$.  But then $S$ virtually
splits as a direct product $L_1\times S'$, where $S'=S\cap (\G_2\times\cdots \G_n)$.

Note that the above reduction involved only one passage to a finite index subgroup,
and that was within the abelian factor $\G_1$.  The other $\G_i$ and $L_i$
are left unchanged.  In particular, the $L_i$ remain non-abelian.

We have now reduced to the situation of the statement of Theorem 
\ref{split}, with the additional hypothesis that each $p_i:S\to\G_i$
is surjective.

In particular, each  $L_i$ is normal in   $\G_i$, and hence
is of finite index   for $i=1,\dots,r$. 

Let
 $\Pi_r:\G_1\times\cdots\times\G_n\to \G_1\times\cdots\times\G_r$
 be the natural projection,
 let $\Lambda =  L_1\times\cdots\times L_r$
 and let $\hat S_0=S\cap\Pi_r^{-1}(\Lambda)$.
 Then $\hat S_0$ has finite index 
 in $S$ and $\hat S_0=\Lambda\times \hat S_2$, where
  $\hat S_2 = \hat S_0\cap(\G_{r+1}\times\cdots\times\G_n)$.
Theorem \ref{main3} now says that  that $\hat S_2$ has
 a subgroup of finite index $S_2$ with
$H_k(S_2;\mathbb Q)$  infinite dimensional for some $k\le n-r$.


\begin{thebibliography}{8}

\bibitem{AB}
E.~Alibegovi\'c and M.~Bestvina,
{\em Limit groups are CAT$(0)$},  J. London Math. Soc. (2)
{\bf 74}  (2006), 259--272.

\bibitem{benjy}
B.~Baumslag, {\em Residually Free Groups},
Proc. London Math. Soc. (3) {\bf 17} (1967); 402--418.
 
\bibitem{BR} 
G.~Baumslag and J.~E.~Roseblade,
{\em  Subgroups of direct products of free groups},
J. London Math. Soc. (2) {\bf 30} (1984), 44--52.



\bibitem{bieri} R. Bieri, ``Homological dimension of discrete groups", 
Queen Mary College Mathematics Notes (1976). 

\bibitem{bestBr} 
M.~Bestvina and N.~Brady, {\em Morse theory and finiteness properties of groups.},  Invent. Math. {\bf  129 } (1997),  445--470.

\bibitem{BF} 
M.~Bestvina and M.~Feighn,
{\em{Notes on {S}ela's work: {L}imit groups
  and {M}akanin-{R}azborov diagrams}},
To appear  in {\em{Geometric and cohomological
methods in
group theory}} (M.R.~Bridson, P.H.~Kropholler,
I.J.~Leary, eds.).   
  

\bibitem{mb:haef} 
M.R.~Bridson, {\em On the
subgroups of semihyperbolic groups}, in ``Essays on geometry and related topics", Monogr. Enseign. Math., 
{\bf{38}}, pp.~85--111, Geneva, 2001.

\bibitem{bh1} 
M.R.~Bridson and J. Howie,
{\em Normalizers in limit
groups},   Math. Ann. {\bf 337}  (2007), 385--394.

\bibitem{bh2} M.R.~Bridson and J. Howie,
{\em Subgroups of direct products of elementarily free groups}, 
Geom. Func. Anal.,
{\bf{17}} (2007), 385--403.

\bibitem{bh3} M.R.~Bridson and J.~Howie, 
{\em Subgroups of direct products of two limit groups}, Math. Res. Lett. {\bf 14} (2007), 547--558.

\bibitem{bhms} M.R.~Bridson, J.~Howie, C.F.~Miller~III and H.~Short,
The subgroups of direct products of surface groups, Geometriae Dedicata {\bf 92} (2002), 95--103.

\bibitem{bhms3} M.R.~Bridson, J.~Howie, C.F.~Miller~III and H.~Short,
{\em Finitely presented residually free groups}, in preparation.

\bibitem{BM1} M.R.~Bridson and C.F.~Miller III, {\em Structure and finiteness
properties of subdirect products of groups}, arXiv:0708.4331.

\bibitem{BW} M.R.~Bridson and  H.~Wilton, {\em Subgroup
separability in residually free groups}, Math. Z., to appear.

\bibitem{ksbrown} 
K.S.~Brown, Cohomology of groups, Springer-Verlag 1982.

\bibitem{CG}
C.~Champetier and V.~Guirardel, \emph{Limit groups as limits of
  free groups},  Israel J. Math. {\bf 146}  (2005), 1--75.

\bibitem{DG} 
T. Delzant and M. Gromov,
{\em Cuts in K\"{a}hler groups}, in: Infinite groups: geometric, combinatorial and dynamical aspects, (L.~Bartholdi, T.~Ceccherini-Silberstein, T.~Smirnova-Nagnibeda and A.~Zuk, eds) 31--55, Progr. Math., 248, Birkh\"auser, Basel, 2005.
 
\bibitem{KM1}
O.G.~Kharlampovich and A.G.~Myasnikov, 
\emph{Irreducible affine varieties over a
  free group. {I}. {I}rreducibility 
  of quadratic equations and
  {N}ullstellensatz},
   J. Algebra \textbf{200} (1998), 472--516.

\bibitem{KM2}
O.G.~Kharlampovich and A.G.~Myasnikov, 
\emph{Irreducible affine varieties over a
  free group. {II}. {S}ystems in triangular
   quasi-quadratic form and
  description of residually free groups}, 
  J. Algebra \textbf{200} (1998),
  517--570.

\bibitem{KM3}
O.G.~Kharlampovich and A.G.~Myasnikov, 
\emph{ Elementary theory of free non-abelian groups},  J.Algebra
\textbf{302} (2006), 451--552.


  
\bibitem{res}
V.~N. Remeslennikov, \emph{{$\exists$}-free groups}, 
Sibirsk. Mat. Zh.
  \textbf{30} (1989), no.~6, 193--197.

\bibitem{S1}
Z.~Sela, 
\emph{Diophantine geometry over groups. {I}. 
{M}akanin-{R}azborov
  diagrams}, 
  Publ. Math. Inst. Hautes \'Etudes Sci. (2001), 
  no.~93, 31--105.

\bibitem{S2}
Z.~Sela, \emph{Diophantine geometry over groups. {II}. 
{C}ompletions, closures
  and formal solutions}, Israel J. Math. 
  \textbf{134} (2003), 173--254.
  
  \bibitem{S6} Z.~Sela,
{\em{Diophantine geometry over groups VI: 
The elementary theory of a free group}},  
 Geom. Funct. Anal.  \textbf{16}  (2006),  707--730.
 
\bibitem{S9} Z.~Sela,
{\em{Diophantine geometry over groups: a list of research
problems}}, preprint, {\tt http://www.ma.huji.ac.il/$\sim$zlil/problems.dvi}. 

\bibitem{serre}
J.~P. Serre, {\em Cohomologie des groupes discrets} Ann. of Math. Studies
{\bf 70} (1971), 77--169.

\bibitem{Serre2}  
J.-P.\,Serre, ``Trees", Springer-Verlag, Berlin-New York, 1980.

\bibitem{stall} 
J.~R.~Stallings,
{\it A finitely presented group whose 
3--dimensional homology group is not
finitely generated},
Amer. J. Math. , {\bf 85}, (1963)  541--543.

\bibitem{wall}
C.~T.~C. Wall, {\em Finiteness conditions for {\rm CW}-complexes},
Ann. of Math. (2) {\bf 81} (1965), 56--69.

\bibitem{wilt} 
H.~Wilton, {\em Hall's Theorem for limit groups}, Geom.
Func. Anal., to appear. 

\end{thebibliography}
\end{document}